\numberwithin{equation}{section}
\numberwithin{figure}{section}
\numberwithin{table}{section}
\definecolor{lightblue}{rgb}{0.33,0.5,0.90}
\definecolor{lightgreen}{rgb}{0.22,0.50,0.25}
\newcommand{\cblue}{}
\newcommand\cero{\boldsymbol{0}}
\newcommand\bH{\mathbf{H}}
\newcommand\bI{\mathbf{I}}
\newcommand\bV{\mathbf{V}}
\newcommand\beps{\boldsymbol{\varepsilon}}
\newcommand\bb{\boldsymbol{b}}
\newcommand\bk{\boldsymbol{k}}
\newcommand\nn{\boldsymbol{n}}
\newcommand\bsigma{\boldsymbol{\sigma}}
\newcommand\bu{\boldsymbol{u}}
\newcommand\bv{\boldsymbol{v}}
\newcommand\bw{\boldsymbol{w}}
\newcommand\bx{\boldsymbol{x}}
\newcommand\cP{\mathcal{P}}
\newcommand\RR{\mathbb{R}}
\newcommand\cT{\mathcal{T}}
\newcommand\cL{\mathcal{L}}
\newcommand\cA{\mathcal{A}}
\newcommand{\norm}[1]{\left\|#1\right\|}
\newcommand\bdiv{\mathop{\mathbf{div}}\nolimits}
\newcommand\vdiv{\mathop{\mathrm{div}}\nolimits}
\newcommand*{\bigchi}{\mbox{\Large$\chi$}}
\newcommand{\ds}{\,\mbox{d}s}
\theoremstyle{plain}
\newtheorem{theorem}{Theorem}[section]
\newtheorem{lemma}[theorem]{Lemma}
\theoremstyle{definition}
\theoremstyle{remark}
\newtheorem{remark}{Remark}
\renewenvironment{proof}{\noindent{\it Proof.}}{\hfill$\square$}
\begin{document}
\title{Well-posedness and discrete analysis for advection-diffusion-reaction in poroelastic media}
\author{
\name{Nitesh Verma\textsuperscript{a}, Bryan G\'omez-Vargas\textsuperscript{b},
	Luis Miguel De Oliveira Vilaca\textsuperscript{c}, Sarvesh Kumar\textsuperscript{a}, and Ricardo Ruiz-Baier\textsuperscript{d}\thanks{Author for correspondence: R. Ruiz-Baier. Email: {\tt ricardo.ruizbaier@monash.edu}}} 	
\affil{\small{\textsuperscript{a}Indian Institute of Space Science and Technology, Trivandrum 695 547, India. \\
\textsuperscript{b}CI$^{\,2}\!$MA and Departamento de Ingenier\'\i a Matem\' atica,
			Universidad de Concepci\' on, Casilla 160-C, Concepci\' on, Chile; and
			Secci\'on de Matem\' atica,
			Sede de Occidente, Universidad de Costa Rica, San Ram\'on, Alajuela, Costa Rica.\\ 
\textsuperscript{c}Laboratory of Artificial \& Natural Evolution (LANE), Department of Genetics and Evolution,
University of Geneva, 4 Boulevard d'Yvoy, 1205 Geneva, Switzerland.\\ 
\textsuperscript{d}Mathematical Institute,
 University of  Oxford, A. Wiles Building,
  Woodstock Road, OX2 6GG Oxford, UK; and
  Universidad Adventista de Chile, Casilla 7-D, Chill\'an, Chile.
  \cblue{Present address: School of Mathematics, Monash University, 9 Rainforest Walk, Clayton VIC 3800, Australia.}}
  }}

\date{\today}
\maketitle
		
\begin{abstract}
We analyse a PDE system modelling poromechanical processes (formulated in mixed form using the solid deformation, fluid pressure, and total pressure) interacting with diffusing and reacting solutes in the medium. We investigate the well-posedness of the nonlinear set of equations using fixed-point theory, Fredholm's alternative, a priori estimates, and compactness arguments. We also propose a mixed finite element method and demonstrate the stability of the scheme. \cblue{Error estimates are derived in suitable norms, and numerical experiments are conducted to illustrate the mechano-chemical coupling and to verify  the  theoretical rates of convergence.}
\end{abstract}
\begin{keywords}
Biot equations; reaction-diffusion; mixed finite element scheme; well-posedness and  stability;  \cblue{numerical experiments and error estimates}.
\end{keywords}
\begin{amscode}
65M60; 74F10; 35K57; 74L15.
\end{amscode}

	\section{Introduction and problem statement}
	
	\subsection{Scope of the paper}
	
	We aim at studying the spreading properties of a system of interacting
	species when the underlying medium is of a porous nature and it undergoes elastic
	deformations. The model we propose has the potential
	to deliver quantitative insight on the two-way coupling between the transport of solutes and poromechanical effects in the context of microscopic-macroscopic
	mechanobiology. Real biological tissues are conformed by living cells, and volume changes due to cell birth and death onset velocity fields and local deformation, eventually driving domain growth \cite{neville06}. Interconnectivity of the porous microstructure is in this case sufficient to accommodate fluid flowing locally. The described problem can be encountered in numerous applications not only related to cell biomechanics, and some of these are explored in \cblue{our very recent paper \cite{verma02} (including traumatic brain injury and calcium dynamics)}.

	From the viewpoint of solvability analysis of partial differential equations and/or the theoretical aspects of finite element discretisations, the relevant literature contains a few works
	specifically targeting the coupling of diffusion in deformable porous media. We mention for instance the classical works
	of Showalter \cite{showalter00} and Showalter and Momken \cite{showalter02} which employ the theory of degenerate equations in Hilbert spaces,
	or the study of Hadamard well-posedness of parabolic-elliptic systems governing chemo-poroelasticity with thermal
	effects \cite{malysheva06}. More recently, \cite{lee19} introduces mixed finite element schemes and
	stability analysis for a system of multiple-network poroelasticity, that resembles the model problem we are interested in. Also,
	in \cite{brun19} a six-field system including temperature dynamics has been rigorously analysed using linearisation tools, the Banach fixed-point
	theory and weak compactness, and piecewise continuation in time.
	As in \cite{lee19}, we also employ the three-field formulation for the Biot consolidation equations introduced in
	\cite{oyarzua16} (see also \cite{lee17}). However in the model we adopt here, we consider a two-way active transport: the poromechanical deformations
	affect the transport of the chemical species through advection and also by means of a volume-dependent modification of the reaction
	terms; and
	the solutes' concentration generate an active stress resulting in a distributed load depending linearly on the concentration gradients. \cblue{Let us point out that in a companion paper \cite{verma02} we are addressing in more detail the modelling formalisms, we perform a linear stability analysis to identify suitable ranges for the key coupling parameters, and we give a full set of numerical tests in 2D and 3D.}
	
	The coupled system is set up in mixed-primal structure, where the equations of poroelasticity have a mixed form using displacement, pressure, and a rescaled total pressure, and the advection-diffusion-reaction system is also set in primal form, solving for the species' concentrations. Then, we focus on the semidiscrete in-time formulation, rewriting the resulting scheme equivalently as a fixed-point equation \cite{A-G-RB2014,anaya18,ggr-2018}, and then, Schauder fixed point theorem \cite{A-G-RB2014,ggr-2018}, combined with Fredholm's  alternative \cite{anaya19b,gmm2007,oyarzua16}  and quasi-linear equations theory \cite{anaya18,ladyzenskaja}, are applied to establish the solvability of the introduced formulation. Consequently, the well-known MINI-elements family and continuous piecewise polynomials are \cblue{proposed} to approximate the three-field formulation, whereas Lagrange elements are introduced to approximate the concentrations. Thus, making use of the discrete inf-sup condition together with classical inequalities, we obtain the corresponding stability result for our approximation. The advantage of using this approach is that the stability results are independent of the Lam\'e constants of the solid, and this is particularly important to prevent volumetric locking.
	We further stress that the main difficulties in the present analysis (which are not present in the literature cited above) are related to the advective coupling appearing in the advection-reaction-diffusion system. In contrast \cblue{with, e.g., \cite{brun18,brun19}}, the advecting velocity in our case is that of the solid (instead of the Darcy velocity), which is not a primary variable in our formulation. This implies that an extra $1/(\Delta t)$ appears from the backward Euler time discretisation of the solid velocity, complicating the analysis of the semidiscrete and fully discrete problems.

	The remainder of this work is structured as follows. The governing equations as well as the main assumptions
	on the model coefficients will be stated in what is left of this Section.
	Then, in Section~\ref{sec:solvability} we derive a weak formulation and
	include preliminary properties of the mathematical structure of the problem.
	Well-posedness of the coupled problem is then analysed also in Section~\ref{sec:solvability}, focusing in the semidiscrete
	case.
	We proceed in Section~\ref{sec:FE} with the introduction of a locking-free finite element scheme for the discretisation of the model equations,
	based on a stabilised formulation from \cite{oyarzua16} for the consolidation system, and a conforming method for the
	advection-diffusion-reaction subsystem. The convergence of the fully-discrete method is
	\cblue{established in Section~\ref{sec:error}. The numerical verification of these convergence rates is carried out}
	by means of a simple test presented in Section~\ref{sec:results}, \cblue{where we also give an illustrative example of pattern formation and suppression of spatio-temporal patterning due to poro-mechanical loading}.
	We close with a discussion on model extensions in Section~\ref{sec:concl}.

	\subsection{Coupling poroelasticity and advection-diffusion-reaction}
	Let us consider a piece of soft material as a porous medium composed by a mixture of
	incompressible grains and interstitial fluid, whose description can be placed in the context of
	the classical Biot problem. As in \cite{oyarzua16,lee17}, we   introduce an
	auxiliary unknown $\psi$ representing the volumetric part of
	the total stress.
	In the absence of gravitational forces, and
	for a given body load $\bb(t):\Omega\to \RR^d$ and a mass source $\ell(t):\Omega\to \RR$, one seeks for each time $t\in (0,t_{\mathrm{final}}]$,
	the displacements of the  porous skeleton, $\bu^s(t):\Omega\to \RR^d$, and the pore pressure of the
	fluid, $p^f(t):\Omega\to\RR$, such that
	\begin{align}
	\biggl(c_0
	+\frac{\alpha^2}{\lambda}\biggr) \partial_t p^f -\frac{\alpha}{\lambda} \partial_t \psi
	- \frac{1}{\eta} \vdiv(\kappa  \nabla p^f )  &= \ell &\text{in $\Omega\times(0,t_{\mathrm{final}}]$},
	\label{eq1:mass}\\
	\bsigma & =2\mu \beps(\bu^s)- \psi \bI, & \text{in $\Omega\times(0,t_{\mathrm{final}}]$}, \label{eq:sigma}\\
	\psi & = \alpha p^f - \lambda \vdiv\bu^s, & \text{in $\Omega\times(0,t_{\mathrm{final}}]$}, \label{eq:defphi}\\
	-\bdiv\bsigma & = \rho\bb & \text{in $\Omega\times(0,t_{\mathrm{final}}]$}. \label{eq1:momentum}
	\end{align}
	Here $\kappa(\bx)$ is the hydraulic conductivity of the porous medium (possibly anisotropic),  $\rho$ is the
	density of the solid material, $\eta$ is the constant viscosity of the
	interstitial fluid, $c_0$ is the constrained specific storage coefficient, $\alpha$ is the Biot-Willis
	consolidation parameter, and $\mu,\lambda$ are the shear and dilation moduli associated with the
	constitutive law of the solid structure.
	
	We also
	consider the propagation of a generic species with concentration
	$w_1$, reacting with an additional species with concentration $w_2$. The problem
	can be written as follows
	\begin{align}
	\label{eq:ADR1}
	\partial_t w_1 + \partial_t \bu^s \cdot \nabla w_1 -
	\vdiv\{ D_1(\bx)\, \nabla w_1 \}
	&=  f(w_1,w_2,\bu^s) & \text{in } \Omega\times(0,t_{\mathrm{final}}],\\
	\label{eq:ADR2}
	\partial_t w_2 + \partial_t \bu^s \cdot \nabla w_2 -
	\vdiv \{ D_2(\bx)\, \nabla w_2 \}
	&=  g(w_1,w_2,\bu^s) & \text{in } \Omega\times(0,t_{\mathrm{final}}],
	\end{align}
	where $D_1,D_2$ are positive definite diffusion matrices (however we do not
	consider here cross-diffusion effects as in, e.g., \cite{anaya18,recho19}).
	In the well-posedness analysis the reaction kinetics are generic. Nevertheless, for sake of fixing ideas and
	in order to specify the coupling effects also through a stability analysis that will be conducted in \cite{verma02},
	they will be chosen as a modification to the classical model from \cite{schnack79}
	\begin{align*}
	f(w_1,w_2,\bu^s) &=  \beta_1(\beta_2 - w_1 + w_1^2w_2) + \gamma\, w_1\, \partial_t\vdiv\bu^s,
	\\
	g(w_1,w_2,\bu^s) &=\beta_1(\beta_3 -w_1^2w_2) + \gamma\, w_2\, \partial_t\vdiv\bu^s,
	\end{align*}
	where $\beta_1,\beta_2,\beta_3,\gamma$ are positive model constants.  Note that the mechano-chemical feedback (the process where
	mechanical deformation modifies the reaction-diffusion effects) is here
	assumed only through advection and an additional reaction term depending on local dilation.
	The latter term is here modulated by $\gamma>0$, thus representing
	a source for both species if the solid volume increases, otherwise the additional
	contribution is a sink for both chemicals \cite{neville06}.
	
	The
	poromechanical
	deformations are also actively influenced by microscopic tension generation. A very simple
	description is given in terms of active stresses: we assume that the total Cauchy stress
	contains a passive and an active component, where the passive part is as in
	\eqref{eq:sigma} and
	\begin{equation}\label{eq:total-stress}
	\bsigma_{\text{total}} = \bsigma + \bsigma_{\text{act}},
	\end{equation}
	where
	the active stress operates primarily on a given, constant direction
	$\bk$, and its intensity
	depends on a scalar field $r=r(w_1,w_2)$ and on a positive constant $\tau$, to be specified later on (see, e.g., \cite{jones12})
	\begin{equation}\label{eq:active-stress}
	\bsigma_{\text{act}} = -\tau\, r \bk\otimes\bk.
	\end{equation}

	In summary, the coupled system reads
	\begin{align}
	-\bdiv( 2\mu \beps(\bu^s)- \psi \bI + \bsigma_{\text{act}})& = \rho\bb & \text{in $\Omega\times(0,t_{\mathrm{final}}]$},\nonumber\\
	\biggl(c_0
	+\frac{\alpha^2}{\lambda}\biggr) \partial_t p^f -\frac{\alpha}{\lambda} \partial_t \psi
	- \frac{1}{\eta} \vdiv(\kappa  \nabla p^f ) &= \ell & \text{in $\Omega\times(0,t_{\mathrm{final}}]$},\nonumber\\
	\psi - \alpha p^f + \lambda \vdiv\bu^s &= 0 & \text{in $\Omega\times(0,t_{\mathrm{final}}]$}, \label{eq:coupled}\\
	\partial_t w_1 + \partial_t \bu^s \cdot \nabla w_1 -
	\vdiv ( D_1(\bx)\, \nabla w_1 )
	&=  f(w_1,w_2,\bu^s) & \text{in $\Omega\times(0,t_{\mathrm{final}}]$},\nonumber\\
	\partial_t w_2 + \partial_t \bu^s \cdot \nabla w_2 -
	\vdiv ( D_2(\bx)\, \nabla w_2 )
	&=  g(w_1,w_2,\bu^s) & \text{in $\Omega\times(0,t_{\mathrm{final}}]$},\nonumber
	\end{align}
	which we endow with appropriate initial data at rest
	\begin{equation}\label{eq:initial}
	w_1(0) = w_{1,0}, \quad w_2(0)=w_{2,0}, \quad \bu^s(0)= \cero, \quad p^f(0) = 0, \quad \psi(0) = 0 \quad \text{in $\Omega\times\{0\}$,}
	\end{equation}
	and boundary conditions in the following manner
	\begin{align}\label{bc:Gamma}
	\bu^s = \cero\quad \text{and} \quad \frac{\kappa}{\eta} \nabla p^f \cdot\nn = 0\qquad \qquad&\text{on $\Gamma\times(0,t_{\text{final}}]$},\\
	\label{bc:Sigma}
	[2\mu\beps(\bu^s) - \psi\,\bI + \bsigma_{\text{act}}]\nn = \cero \quad\text{and}\quad p^f=0\qquad\qquad  &\text{on $\Sigma\times(0,t_{\text{final}}]$},\\
	D_1(\bx)\nabla w_1\cdot \nn = 0 \quad\text{and}\quad D_2(\bx)\nabla w_2\cdot \nn=0  \qquad\qquad&\text{on $\partial \Omega\times(0,t_{\text{final}}]$},
	\end{align}
	where the boundary $\partial\Omega = \Gamma\cup\Sigma$  is disjointly split into $\Gamma$ and $\Sigma$
	where we prescribe clamped boundaries and zero fluid normal fluxes; and zero (total) traction together with constant fluid pressure, respectively. Moreover, zero concentrations normal fluxes are prescribed on $\partial \Omega$. We point out that, if we would like to start with a model in terms of the divergence ($\mathrm{div}(w_i\partial_t\bu^{s})$ instead of $\partial_t \bu^s \cdot \nabla w_i$ in \eqref{eq:ADR1}-\eqref{eq:ADR2}, $i\in \{1,2\}$), we need to assume zero total flux (including the advective term, see, e.g., \cite{anaya18}). Homogeneity of the boundary conditions is only assumed to simplify the exposition of the subsequent analysis.

	\section{Well-posedness analysis}\label{sec:solvability}
	\subsection{Weak formulation and a semi-discrete form}
	Let us multiply \eqref{eq:coupled} by adequate test functions and integrate by parts (in space) whenever
	appropriate. Incorporating the boundary conditions \eqref{bc:Gamma}-\eqref{bc:Sigma} as well as the definition of
	the total stress \eqref{eq:total-stress}, we end up with the
	following variational problem: For a given $t>0$, find $\bu^s(t)\in\bH^1_\Gamma(\Omega),p^f(t)\in H_\Sigma^1(\Omega),\psi(t)\in L^2(\Omega),w_1(t)\in H^1(\Omega),w_2(t)\in H^1(\Omega)$
	such that
	\begin{gather}
	2\mu\!\! \int_{\Omega} \beps(\bu^s):\beps(\bv^s) -\!\int_{\Omega}\!\psi\vdiv \bv^s  =
	\int_{\Omega} \rho\bb \cdot\bv^s \! + \! \int_{\Omega}\!\!\tau r \bk\otimes\bk :\beps(\bv^s)
	\quad \forall \bv^s\in\bH^1_\Gamma(\Omega),\nonumber\\
	\biggl(c_0+\frac{\alpha^2}{\lambda}\biggr) \int_{\Omega} \partial_t p^fq^f + \frac{1}{\eta}\int_{\Omega}\kappa \nabla p^f\cdot\nabla q^f - \frac{\alpha}{\lambda} \int_{\Omega} \partial_t\psi q^f =  \int_{\Omega} \ell q^f
	\quad\forall q^f\in H_\Sigma^1(\Omega),\nonumber\\
	-\int_{\Omega}\phi\vdiv \bu^s + \frac{\alpha}{\lambda} \int_{\Omega} p^f\phi - \frac{1}{\lambda} \int_{\Omega} \psi\phi = 0 \quad\forall\phi\in L^2(\Omega),\label{eq:weak}\\
	\int_\Omega  \partial_t w_1s_1 + \int_\Omega D_1\nabla w_1\cdot \nabla s_1 + \int_\Omega  (\partial_t\bu^s\cdot\nabla w_1)s_1 =
	\int_{\Omega} f(w_1,w_2,\bu^s)\,s_1 \quad \forall s_1 \in H^1(\Omega),\nonumber\\
	\int_\Omega  \partial_t w_2s_2 + \int_\Omega D_2\nabla w_2\cdot \nabla s_2 + \int_\Omega  (\partial_t\bu^s\cdot\nabla w_2)s_2 =
	\int_{\Omega} g(w_1,w_2,\bu^s)\,s_2 \quad \forall s_2 \in H^1(\Omega).\nonumber
	\end{gather}
	Next, let us discretise the time interval $(0,t_{\text{final}}]$ into equispaced points $t^n = n\Delta t$, and use the following general notation for the first order backward difference $\Delta t \delta_t X^{n+1}:=X^{n+1}-X^n$. In this way, we can write a semidiscrete form of \eqref{eq:weak}: From initial data $\bu^{s,0},p^{f,0},\psi^{0}, w_1^0, w_2^0$ and
	for $n=1,\ldots$, find $\bu^{s,n+1}\in\bH^1_\Gamma(\Omega),p^{f,n+1}\in H_\Sigma^1(\Omega),\psi^{n+1}\in L^2(\Omega),w_1^{n+1}\in H^1(\Omega),w_2^{n+1}\in H^1(\Omega)$ such that
	\begin{alignat}{5}
	&&   a_1(\bu^{s,n+1},\bv^s)   &&                 &\;+&\; b_1(\bv^s,\psi^{n+1})     &=&\;F_{r^{n+1}}(\bv^s)&\quad\forall \bv^s\in\bH^1_\Gamma(\Omega), \label{weak-u}\\
	\tilde{a}_2(p^{f,n+1},q^f)&\;+\;&               &&      a_2(p^{f,n+1},q^f)   &\;-&\;   \tilde{b}_2(q^f,\psi^{n+1})  &=&\;G_{\ell^{n+1}}(q^f) &\quad\forall q^f\in H_\Sigma^1(\Omega), \label{weak-p}\\
	&&b_1(\bu^{s,n+1},\phi)  &\;+\;& b_2(p^{f,n+1},\phi)&\;-&\; a_3(\psi^{n+1},\phi) &=&\; 0 &\quad\forall\phi\in L^2(\Omega), \label{weak-psi}    \\
	\tilde{a}_4(w_1^{n+1},s_1) &\;+ &\; a_4(w_1^{n+1},s_1) &\;+ &\;  c(w_1^{n+1},s_1,\bu^{s,n+1})  & & & = &\; J_{f^{n+1}}(s_1) &\quad\forall s_1 \in H^1(\Omega), \label{weak-w1}\\
	\tilde{a}_5(w_2^{n+1},s_2) &\;+ &\; a_5(w_2^{n+1},s_2) &\;+ &\;   c(w_2^{n+1},s_2,\bu^{s,n+1}) &&& =&\; J_{g^{n+1}}(s_2) &\quad\forall s_2\in H^1(\Omega), \label{weak-w2}
	\end{alignat}
	where the bilinear forms $a_1:\bH^1_\Gamma(\Omega)\times\bH^1_\Gamma(\Omega)\to \RR$,
	$a_2:H_\Sigma^1(\Omega)\times H_\Sigma^1(\Omega)\to \RR$, $a_3: L^2(\Omega)\times L^2(\Omega)\to \RR$,
	$a_4,a_5: H^1(\Omega)\times H^1(\Omega)\to \RR$,
	$b_1:\bH^1_\Gamma(\Omega)\times L^2(\Omega)\to \RR$, $b_2,\tilde{b}_2:H_\Sigma^1(\Omega)\times L^2(\Omega)\to \RR$,
	the trilinear form $c:H^1(\Omega)\times H^1(\Omega) \times\bH^1_\Gamma(\Omega)\to \RR$,
	and linear functionals $F_r:\bH^1_\Gamma(\Omega)\to\RR$ (for $r$ known), $G_{\ell}:H_\Sigma^1(\Omega)\to\RR$, $J_f,J_g: H^1(\Omega)\to\RR$ (for known $f$ and known $g$),
	satisfy the following specifications
	\begin{gather}
	a_1(\bu^{s,n+1},\bv^s) := 2\mu \int_{\Omega} \beps(\bu^{s,n+1}):\beps(\bv^s),\;\;b_1(\bv^s,\phi):= -\int_{\Omega}\phi\vdiv \bv^s,\;\;\, b_2(p^{f,n+1},\phi):= \frac{\alpha}{\lambda} \int_{\Omega} p^{f,n+1}\phi,\nonumber\\
	\tilde{a}_2(p^{f,n+1},q^f)  := \biggl(c_0+\frac{\alpha^2}{\lambda}\biggr)
	\int_{\Omega}  \delta_t p^{f,n+1} q^f, \quad a_2(p^{f,n+1},q^f)  := \frac{1}{\eta}\int_{\Omega}\kappa \nabla p^{f,n+1}\cdot\nabla q^f,\nonumber\\
	\tilde{b}_2(q^f,\psi^{n+1}):= \frac{\alpha}{\lambda} \int_{\Omega} \delta_t\psi^{n+1} q^f,\quad
	a_3(\psi^{n+1},\phi):=  \frac{1}{\lambda} \int_{\Omega} \psi^{n+1}\phi,\nonumber\\
	\tilde{a}_4(w_1^{n+1},s_1) : = \int_\Omega  \delta_t w_1^{n+1}s_1, \quad a_4(w_1^{n+1},s_1) : =  \int_\Omega D_1(\bx)\nabla w_1^{n+1}\cdot \nabla s_1, \label{eq:forms}\\
	\tilde{a}_5(w_2^{n+1},s_2)  : = \int_\Omega  \delta_tw_2^{n+1}s_2, \quad a_5(w_2^{n+1},s_2)  : = \int_\Omega D_2(\bx)\nabla w_2^{n+1}\cdot \nabla s_2,,\nonumber \\
	c(w,s,\bu^{s,n+1})  := \int_\Omega  (\delta_t\bu^{s,n+1}\cdot\nabla w)s, \quad
	F_{r^{n+1}}(\bv^s) := \rho\int_{\Omega} \bb^{n+1} \cdot\bv^s
	+\tau\int_{\Omega} r^{n+1}  \bk\otimes\bk :\beps(\bv^s), \nonumber \\
	G_{\ell^{n+1}}(q^f) :=  \int_{\Omega} \ell^{n+1}  q^f, \quad
	J_{f^{n+1}}(s_1) : = \int_{\Omega} f^{n+1} s_1,
	\quad J_{g^{n+1}}(s_2) : = \int_{\Omega} g^{n+1} s_2.
	\nonumber
	\end{gather}
	
	\subsection{Preliminaries}\label{preliminaries}
	We will consider that the initial data \eqref{eq:initial} are nonnegative and regular enough. Moreover,
	throughout the text we will assume that the anisotropic permeability $\kappa(\bx)$ and the diffusion matrices $D_1(\bx),D_2(\bx)$
	are uniformly bounded and positive definite in $\Omega$. The latter means that, there exist positive constants $\kappa_1, \kappa_2$, and $D_i^{\min}, D_i^{\max}$, $i\in \{1,2\}$, such that
	$$\cblue{\kappa_1|\bw|^2 \leq \bw^{\mathrm{t}}\kappa(\bx)\bw \leq \kappa_2|\bw|^2,\quad \mathrm{and}\quad D_i^{\min}|\bw|^2 \leq \bw^{\mathrm{t}}D_i(\bx)\bw \leq D_i^{\max} |\bw|^2  \quad \forall\bw \in\mathbb{R}^d,\;\;\forall\bx\in\Omega.}$$
	Also,
	for a fixed $\bu^s$, the
	reaction kinetics $f(w_1,w_2,\cdot), g(w_1,w_2,\cdot)$ satisfy the growth conditions
	\begin{gather}
	\nonumber
	|f(w_1,w_2,\cdot)|\leq C(1+|w_1|+|w_2|),\quad |g(w_1,w_2,\cdot)|\leq C(1+|{w_1}|+|w_2|) \quad \text{ for  $w_1,w_2\geq 0$},\\
	|m(w_1,w_2,\cdot)-m(\tilde{w}_1,\tilde{w}_2,\cdot)|\leq C(|w_1-\tilde{w}_1|+|w_2-\tilde{w}_2|)\quad \text{ for $m=f,g$},\label{eq:reaction-assumption}\\
	\nonumber f(w_1,w_2,\cdot)=f_0 \;{(\ge 0)} \quad\text{ and }\quad g(w_1,w_2,\cdot)=g_0 \;{(\ge 0)} \quad \text{ if $w_1\leq 0$ or $w_2\leq 0$},
	\end{gather}
	and given $w_1$, $w_2 \in \mathbb{R}$, the scalar field $r(w_1,w_2)$ defined in \eqref{eq:active-stress} is such that
	\begin{gather}
	\label{property-of-r}
	|r(w_1,w_2)|\leq |w_1|+|w_2|, \;\;\;|r(w_1,w_2)-r(\tilde{w}_1,\tilde{w}_2)|\leq C(|w_1-\tilde{w}_1|+|w_2-\tilde{w}_2|).
	\end{gather}
	In addition, according to \cite{oyarzua16}, the terms in \eqref{weak-u}-\eqref{weak-w2} fulfil the following continuity
	bounds
	\begin{gather}
	|a_1(\bu^s,\bv^s)|  \le 2\mu {C_{k,2}}\|\bu^s\|_{1,\Omega} \|\bv^s\|_{1,\Omega}, \quad  |a_2(p^{f},q^f)| \le \frac{\kappa_2}{\eta} \|p^f\|_{1,\Omega} \|q^f\|_{1,\Omega}, \nonumber\\
	|a_3(\psi,\phi)|  \le \lambda^{-1}\|\psi\|_{0,\Omega}\|\phi\|_{0,\Omega}, \; |a_4(w_1,s_1)| \le D_1^{\max} \|w_1\|_{1,\Omega} \|s_1\|_{1,\Omega}, \nonumber\\
	|a_5(w_2,s_2)| \le D_2^{\max} \|w_2\|_{1,\Omega} \|s_2\|_{1,\Omega}, \quad |b_1(\bv^s,\phi)|  \le {\sqrt{d}}\|\bv^s\|_{1,\Omega}\|\phi\|_{0,\Omega}, \label{eq:continuity}\\
	|b_2(q^f,\phi)| \le \alpha\lambda^{-1}\|q^f\|_{1,\Omega}\|\phi\|_{0,\Omega}, \quad
	|F_r(\bv^s)| \le \rho \|\bb \|_{0,\Omega} \|  \bv^s \|_{0,\Omega}  +\tau \sqrt{C_{k,2}}\|r\|_{0,\Omega} \|\bv^s\|_{1,\Omega}, \nonumber \\
	| G_{\ell}(q^f)|  \le \| \ell \|_{0,\Omega} \|q^f\|_{0,\Omega},\quad |J_f(s_1)|  \le \|f\|_{0,\Omega} \|s_1 \|_{0,\Omega}, \quad |J_g(s_2)|  \le \|g\|_{0,\Omega} \|s_2 \|_{0,\Omega}, \nonumber
	\end{gather}
	for all $\bu^s,\bv^s \in \bH^1_\Gamma(\Omega)$, $p^f,q^f \in H_\Sigma^1(\Omega)$,
	$w_1,w_2,s_1,s_2 \in H^1(\Omega)$, $\psi,\phi\in L^2(\Omega)$.
	We also have the following coercivity and positivity bounds
	\begin{gather}
	a_1(\bv^s,\bv^s)  \ge 2 \mu C_{k,1}\|\bv^s\|_{1,\Omega}^2 , \quad a_2(q^{f},q^f)| \ge \frac{\kappa_1c_p}{\eta} \|q^f\|_{1,\Omega}^2, \quad
	a_3(\phi,\phi)  = \lambda^{-1}\|\phi\|_{0,\Omega}^2, \nonumber \\
	a_4(s_1,s_1) \ge D_1^{\min} |s_1|^2_{1,\Omega}, \quad a_5(s_2,s_2) \ge D_2^{\min} |s_2|^2_{1,\Omega},   \label{eq:coercivity}
	\end{gather}
	for all $\bv^s \in \bH^1_\Gamma(\Omega)$, $\phi \in L^2(\Omega)$, $s_1,s_2 \in H^1(\Omega)$, $q^f\in H^1_\Sigma(\Omega)$, where above $C_{k,1}$ and $C_{k,2}$ are the positive constants satisfying
	$$C_{k,1}\Vert\bu^{s,n+1}\Vert^2_{1,\Omega}\leq \Vert\boldsymbol{\varepsilon}(\bu^{s,n+1})\Vert^2_{0,\Omega}\leq C_{k,2}\Vert\bu^{s,n+1}\Vert^2_{1,\Omega},$$
	and $c_p$ is the Poincar\'e constant.
	Moreover, the bilinear form $b_1$ satisfies the inf-sup condition (see, e.g.,  \cite{girault79}):
	For every $\phi \in L^2(\Omega)$, there exists $\beta >0$ \ such that
	\begin{equation}\label{inf-sup}
	\sup_{\bv^s \in \bH^1_\Gamma(\Omega)} \frac{b_1(\bv^s,\phi)}{\|\bv^s\|_{1,\Omega}}  \ge \beta \|\phi\|_{0,\Omega}.
	\end{equation}
	Finally, we recall an important discrete identity and introduce the discrete-in-time norm
	\begin{equation}\label{discrete-identity}
	\int_\Omega X^{n+1}\delta_t X^{n+1} = \frac{1}{2}\delta_t \Vert X^{n+1} \Vert{^2} +\frac{1}{2}\Delta t\Vert\delta_t X^{n+1}\Vert{^2},\qquad \Vert X\Vert^2_{\ell^2(V)}:=\Delta t\sum_{m=0}^{n}\Vert X^{m+1}\Vert_V^2,
	\end{equation}
	respectively, which will be useful for the subsequent analysis.
	
	\subsection{\cblue{Unique solvability of uncoupled ADR and poroelasticity problems}}\label{fixed-point-operator}
	\cblue{As in \cite{anaya18},  we define the following  adequate set which will  be used frequently in our subsequent analysis, particularly in  fixed point analysis:}  For $i=1,2$ and $\forall \; t=t_n, n=0,1,\dots N$ let
	$$ \mathcal{S}:=\mathbf{D} \times \mathbf{D},\quad \text{where} \quad \mathbf{D}:= \lbrace w_i(\bx, \cdot) \in L^2(\Omega): 0 \leq w_i(\bx,t_n) \leq e^{\cblue{-\theta} t_n} M \text{ for a.e. } \bx \in \Omega \rbrace,$$
	and where $M$ is a constant that satisfies $M \geq \sup \lbrace \|w_{1,0}\|_{\infty, \Omega} \;,\|w_{2,0}\|_{\infty,\Omega} \rbrace$, and
	\cblue{$ \theta$ is a positive constant to be specified later.} From system \eqref{weak-u}-\eqref{weak-w2} we then define two uncoupled subproblems.
	For a given concentration pair $(\hat{w}_1^{n+1}, \hat{w}_2^{n+1}) \in \mathcal{S}$, find a solution pair $(w_1^{n+1}, w_2^{n+1}) \in [H^1(\Omega)]^2$ of the following  uncoupled advection-diffusion-reaction (ADR) system:
\begin{alignat}{5}
	\tilde{a}_4(w_1^{n+1},s_1) &\;+ &\; a_4(w_1^{n+1},s_1) &\;+ &\;  c(w_1^{n+1},s_1,\bu^{s,n+1}) &  &  &  = &\; J_{f^{n+1}}(s_1) &\quad\forall s_1 \in H^1(\Omega), \nonumber \\
	\tilde{a}_5(w_2^{n+1},s_2) &\;+ &\; a_5(w_2^{n+1},s_2) &\;+ &\;   c(w_2^{n+1},s_2,\bu^{s,n+1}) && & =&\; J_{g^{n+1}}(s_2) &\quad\forall s_2\in H^1(\Omega). \label{eq:uncoupled1}
	\end{alignat}
	In the above system,  $\bu^{s,n+1}$ is the solution of the following uncoupled poroelastic problem:
	\begin{alignat}{5}
	&& a_1(\bu^{s,n+1},\bv^s)   &&                 &\;+&\; b_1(\bv^s,\psi^{n+1})     &=&\;F_{\hat{r}^{n+1}}(\bv^s)&\quad\forall \bv^s\in\bH^1_\Gamma(\Omega), \nonumber \\
	\tilde{a}_2(p^{f,n+1},q^f)&\;+\;&               &&      a_2(p^{f,n+1},q^f)   &\;-&\;
	\tilde{b}_2(q^f,\psi^{n+1})  &=&\;G_{\ell^{n+1}(q^f)} &\quad\forall q^f\in H_\Sigma^1(\Omega), \label{eq:uncoupled2}\\
	&& b_1(\bu^{s,n+1},\phi)  &\;+\;& b_2(p^{f,n+1},\phi)&\;-&\; a_3(\psi^{n+1},\phi) &=&\; 0 &\quad\forall\phi\in L^2(\Omega), \nonumber
	\end{alignat}
	for given $\hat{r}^{n+1}:=r(\hat{w}_1^{n+1}, \hat{w}_2^{n+1})$.

\cblue{In order to address the  unique solvability of  the semi-discrete system \eqref{weak-u}-\eqref{weak-w2}, first we need to show that the uncoupled problems  \eqref{eq:uncoupled1}  and \eqref{eq:uncoupled2} are well-posed. This is carried out  employing  the Fredholm  alternative approach,  and classical results commonly used for showing the well-posedness  of elliptic/parabolic equations.}
	%
   \cblue{\begin{lemma} \label{uncoupled-P}
		Assume that $(\hat{w}^{n+1}_1, \hat{w}^{n+1}_2) \in \mathcal{S}$. Then  problem \eqref{eq:uncoupled2} has a unique solution
		$$(\bu^{s,n+1}, p^{f,n+1}, \psi^{n+1}) \in \mathbb{V} := \bH^1_{\Gamma}(\Omega) \times H^1_{\Sigma}(\Omega) \times L^2(\Omega).$$
	\end{lemma}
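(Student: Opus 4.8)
The plan is to read \eqref{eq:uncoupled2} as a single linear stationary problem posed at the time level $n+1$ and to settle its unique solvability by the Fredholm alternative on the Hilbert space $\mathbb{V}=\bH^1_{\Gamma}(\Omega)\times H^1_{\Sigma}(\Omega)\times L^2(\Omega)$. First I would collect the current iterate $(\bu^{s,n+1},p^{f,n+1},\psi^{n+1})$ on the left-hand side and move everything explicit to the right: the backward differences $\delta_t p^{f,n+1}$ and $\delta_t\psi^{n+1}$ contribute the unknowns (with a factor $1/(\Delta t)$) to the left and the previous iterates $p^{f,n},\psi^{n}$ to the right. Using the continuity bounds \eqref{eq:continuity}, the estimate $\norm{\hat r^{n+1}}_{0,\Omega}\le \norm{\hat w_1^{n+1}}_{0,\Omega}+\norm{\hat w_2^{n+1}}_{0,\Omega}$ provided by \eqref{property-of-r} together with the hypothesis $(\hat w_1^{n+1},\hat w_2^{n+1})\in\mathcal{S}\subset [L^2(\Omega)]^2$, and the assumed $L^2$-regularity of the previous-step data, the resulting right-hand side defines a bounded linear functional on $\mathbb{V}$. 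Hence the problem is equivalent to $\cA\,\bx=\mathcal{F}$ for a bounded operator $\cA:\mathbb{V}\to\mathbb{V}'$ induced by the forms in \eqref{eq:forms} and a datum $\mathcal{F}\in\mathbb{V}'$.

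Next I would split $\cA=\cA_0+\cK$. The principal part $\cA_0$ retains $a_1$, the elliptic form $a_2$, the form $-a_3$, and the coupling $b_1$, while all the zeroth-order $L^2$-couplings, namely $\tilde a_2$, $b_2$ and $\tilde b_2$, are placed in $\cK$. Because each of the latter factors through the embedding $H^1(\Omega)\hookrightarrow L^2(\Omega)$ (respectively its dual), $\cK$ is compact by Rellich's theorem. For $\cA_0$ the pressure block decouples and is invertible, since $a_2$ is coercive on $H^1_{\Sigma}(\Omega)$ by \eqref{eq:coercivity} and Poincar\'e's inequality; the remaining $(\bu^s,\psi)$-block is a symmetric saddle point in which $\psi$ can be eliminated using the coercive form $a_3$, reducing it to the coercive displacement form $2\mu(\beps(\bu^s),\beps(\bv^s))+\lambda(\vdiv\bu^s,\vdiv\bv^s)$ on $\bH^1_{\Gamma}(\Omega)$ (the inf-sup property \eqref{inf-sup} of $b_1$ is also available if one prefers the standard saddle-point argument). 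Consequently $\cA_0$ is an isomorphism $\mathbb{V}\to\mathbb{V}'$, and $\cA=\cA_0+\cK$ is Fredholm of index zero, so bijectivity will follow once injectivity is established.

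It then suffices to prove injectivity, and here I expect the real difficulty to lie, because the coupling between $p^{f,n+1}$ and $\psi^{n+1}$ is non-symmetric: the form $\tilde b_2$ carries an extra factor $1/(\Delta t)$ absent from $b_2$, so a direct energy test on the three equations of the homogeneous system does not close. To circumvent this I would use the third equation of \eqref{eq:uncoupled2} with arbitrary $\phi\in L^2(\Omega)$ to eliminate the total pressure, obtaining the pointwise relation \eqref{eq:defphi} in the form $\psi^{n+1}=\alpha p^{f,n+1}-\lambda\,\vdiv\bu^{s,n+1}$, and substitute it into the first two equations. Testing the reduced displacement equation with $\bv^s=\bu^{s,n+1}$ and the reduced pressure equation (scaled by $\Delta t$) with $q^f=p^{f,n+1}$, the $\lambda$-terms cancel and one is left, for the homogeneous problem, with $2\mu\norm{\beps(\bu^{s,n+1})}_{0,\Omega}^2+\lambda\norm{\vdiv\bu^{s,n+1}}_{0,\Omega}^2=\alpha\int_\Omega p^{f,n+1}\,\vdiv\bu^{s,n+1}$ on the one hand, and $\alpha\int_\Omega p^{f,n+1}\,\vdiv\bu^{s,n+1}=-c_0\norm{p^{f,n+1}}_{0,\Omega}^2-\tfrac{\Delta t}{\eta}(\kappa\nabla p^{f,n+1},\nabla p^{f,n+1})\le 0$ on the other. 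Since the first quantity is nonnegative and the second nonpositive, both vanish; this forces $p^{f,n+1}=0$ (directly when $c_0>0$, or via Poincar\'e on $H^1_{\Sigma}(\Omega)$ when $c_0=0$), then $\beps(\bu^{s,n+1})=\cero$ so that $\bu^{s,n+1}=\cero$ by Korn's inequality and the clamped condition on $\Gamma$, and finally $\psi^{n+1}=0$. Thus $\cA$ is injective, and the Fredholm alternative yields bijectivity, which is exactly the claimed unique solvability in $\mathbb{V}$.
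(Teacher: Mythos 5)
Your proposal is correct and follows essentially the same route as the paper: both recast \eqref{eq:uncoupled2} as an operator equation on $\mathbb{V}$ and apply the Fredholm alternative, splitting off the zeroth-order $p$--$\psi$ couplings as a compact perturbation of an invertible, block-decoupled principal part (you additionally place $\tilde a_2$ in the compact part, which is immaterial since $a_2$ alone is coercive on $H^1_\Sigma(\Omega)$ by Poincar\'e). The only substantive difference is that you spell out the injectivity of the perturbed operator via elimination of $\psi^{n+1}$ and a sign argument, where the paper simply defers to \cite[Lemma 2.4]{oyarzua16}; note, however, that your motivating claim that a direct energy test ``does not close'' is not quite accurate, since after multiplying the pressure equation by $\Delta t$ both cross terms carry the coefficient $\alpha/\lambda$ and are absorbed by $\frac{\alpha^2}{\lambda}\Vert p^{f,n+1}\Vert_{0,\Omega}^2+\frac{1}{\lambda}\Vert\psi^{n+1}\Vert_{0,\Omega}^2$ via Young's inequality.
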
}
	\begin{proof}
		The main ideas are borrowed from \cite{oyarzua16}, which focuses on steady poromechanics, but
		possessing a similar structure to  \eqref{eq:uncoupled2}.
		In view of putting the formulation in operator form (amenable for analysis through
		the Fredholm alternative) we define, for  $\vec{\bu} = (\bu^{s,n+1}, p^{f,n+1}, \psi^{n+1}) \in \mathbb{V}, \vec{\bv} = (\bv^s,q^f,\phi) \in \mathbb{V}$, the operators
		\begin{align*}
		\langle\mathcal{A}(\vec{\bu}),\vec{v}\rangle &:= a_1(\bu^{s,n+1},\bv^s)+ b_1(\bv^s, \psi^{n+1}) - b_1(\bu^{s,n+1}, \phi) + \tilde{a}_2(p^{f,n+1},q^f) \\
		& \quad + a_2(p^{f,n+1},q^f)+ a_3(\psi^{n+1},\phi),
		\\  \langle\mathcal{K}(\vec{\bu}),\vec{\bv}\rangle &:= -b_2(p^{f,n+1}, \phi) - \tilde{b}_2(q^f, \psi^{n+1}),
		\\ \langle\mathcal{F},\vec{\bv}\rangle &:= F_{\hat{r}^{n+1}}(\bv^s) + G_{\ell^{n+1}}(q^f).
		\end{align*}
		As per the Fredholm alternative, the solvability of the operator problem $(\mathcal{A} +\mathcal{K}) \vec{\bu} = \mathcal{F}$ (which implies solvability of the uncoupled problem \eqref{eq:uncoupled2}), holds if $\mathcal{K}$ is compact, $\mathcal{A}$ is invertible and $\mathcal{A} +\mathcal{K}$ is injective.\\\\
		\noindent {\it Step 1.} $\mathcal{K}$ \textbf{ is compact:} Define an operator $\mathbb{B}_2: H^1(\Omega) \rightarrow L^2(\Omega)$ such that
		$\langle\mathbb{B}_2 (q^f), \phi\rangle:= b_2( q^f, \phi) $, that is, $\mathbb{B}_2 q^f = (\frac{\alpha}{\lambda} I) \circ i_c$ where $i_c:H^1(\Omega) \rightarrow L^2(\Omega)$ is compact using Rellich-Kondrachov Theorem and $I:L^2(\Omega) \rightarrow L^2(\Omega)$ is the identity map. It implies that $\mathbb{B}_2$ is compact, so is $\mathbb{B}_2^*$. Note that $\mathcal{K}(\vec{\bu}) = (0, \mathbb{B}_2(p^{f,n+1}), - \mathbb{B}_2^* (\delta_t \psi^{n+1}))$. Thus, $\mathcal{K}$ is compact.\\\\
		\noindent {\it Step 2.} $\mathcal{A}$ \textbf{ is invertible  and $(\mathcal{A} + \mathcal{K})$ is injective}: Assume $\bV:=\bH^1_{\Gamma}(\Omega),\; Q:= H^1_{\Sigma}(\Omega)$ and $Z:= L^2(\Omega)$. The invertibility of $\cA$ is equivalent to the existence of a unique solution to the operator problem: Given $\cL:= (\cL_1, \cL_2, \cL_3) \in \mathbb{V} $, find $\vec{\bu} \in \mathbb{V}$ such that $\cA \vec{\bu} = \cL$, which is equivalent to the two uncoupled problems:\\
		$\bullet$ Find $(\bu^{s,n+1}, \psi^{n+1}) \in \bV \times Z$ such that
		\begin{align}
		\label{uncoupledA1}
		a_1(\bu^{s,n+1},\bv^s)+ b_1(\bv^s, \psi^{n+1}) & = \cL_1(\bv^s) \quad \forall \bv^s \in \bV, \nonumber \\
		b_1(\bu^{s,n+1}, \phi) - a_3(\psi^{n+1},\phi) & = \cL_3(\phi) \quad \forall \phi \in Z,
		\end{align}
		$\bullet$ Find $ p^{f,n+1} \in Q$ such that
		\begin{align}
		\label{uncoupledA2}
		\tilde{a}_2(p^{f,n+1},q^f) + a_2(p^{f,n+1},q^f) = \cL_2(q^f) \quad \forall q^f \in Q.
		\end{align}
		The continuity and coercivity of  the bilinear forms \cblue{ $a_1(\cdot,\cdot)$} in combination with the inf-sup condition for $b_1(\cdot,\cdot)$ and the semi-positive definiteness of $a_3(\cdot,\cdot)$, ensure  the unique solvability of \eqref{uncoupledA1} (see \cite{boffi13}). Moreover, in view of the  coercivity of $a_2(\cdot,\cdot)$ and the classical result from, \cblue{e.g., \cite[Theorem 11.1.1, Remark 11.1.1]{quarteroni94}}, the  existence of a unique solution  to \eqref{uncoupledA2} can be easily shown. Therefore  $\mathcal{A}$ is invertible. Furthermore, analogously to the proof of  \cite[Lemma 2.4]{oyarzua16}, it is straightforward to show that $\mathcal{A+K}$ is one-to-one, which completes the proof.
		\end{proof}
		
	\cblue{The following two results focus on providing the continuous dependence on data for the unique solution of problem \eqref{eq:uncoupled2}. We begin with a preliminary estimate.}
	\cblue{\begin{lemma}
		Assume that $(\bu^{s,n+1}, p^{f,n+1}, \psi^{n+1}) \in \mathbb{V} $ is the unique solution given by Lemma \ref{uncoupled-P}. Then, there exists $C_2>0$, independent of $\Delta t$ and $\lambda$, such that, for each $n$,
		\begin{align}
		&\nonumber\frac{\mu C_{k,1}}{2}\Vert \bu^{s,n+1}\Vert^2_{1,\Omega} + \frac{c_0}{2}\Vert p^{f,n+1}\Vert^2_{0,\Omega} +\frac{\kappa_1c_p\Delta t}{2\eta}\sum_{m=0}^{n}\Vert p^{f,m+1}\Vert^2_{1,\Omega}\\
		\label{stability-uh-ph}&\qquad \leq C_2\Big\{\Vert\bu^{s,0}\Vert^2_{1,\Omega} + \Vert p^{f,0}\Vert^2_{0,\Omega}+ \Vert \psi^{0}\Vert^2_{0,\Omega}+ \sum_{m=0}^{n}\Vert\psi^{m+1}\Vert_{0,\Omega}^2+\sum_{m=0}^{n}\Vert p^{f,m+1}\Vert^2_{0,\Omega}\\
		&\nonumber \qquad\qquad+\sum_{m=0}^{n}\Vert \hat{r}^{m+1}\Vert_{0,\Omega}^2+\sum_{m=0}^{n} \Vert\bb^{m+1}\Vert_{0,\Omega}^2 + \Delta t\sum_{m=0}^{n} \norm{\ell^{m+1}}^2_{0,\Omega} \Big\}.
		\end{align}
	\end{lemma}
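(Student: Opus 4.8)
The plan is to establish the two groups of terms on the left-hand side of \eqref{stability-uh-ph} separately: a pointwise-in-time bound for $\Vert\bu^{s,n+1}\Vert_{1,\Omega}$, and a telescoped bound for $\Vert p^{f,n+1}\Vert_{0,\Omega}$ together with $\Delta t\sum_m\Vert p^{f,m+1}\Vert^2_{1,\Omega}$. Throughout I would keep in mind that this is only a \emph{preliminary} estimate: the sums $\sum_m\Vert\psi^{m+1}\Vert^2_{0,\Omega}$ and $\sum_m\Vert p^{f,m+1}\Vert^2_{0,\Omega}$ are deliberately left on the right, to be absorbed later (after a companion bound on $\psi$ and a discrete Gr\"onwall argument). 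Consequently the coupling terms may be treated by direct Young / Cauchy--Schwarz estimates rather than by seeking an exact energy cancellation.

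First I would bound the displacement. Testing the first equation of \eqref{eq:uncoupled2} with $\bv^s=\bu^{s,n+1}$ and the third with $\phi=\psi^{n+1}$, and subtracting the latter from the former, the two occurrences of $b_1(\bu^{s,n+1},\psi^{n+1})$ cancel, leaving
\begin{equation*}
a_1(\bu^{s,n+1},\bu^{s,n+1}) + a_3(\psi^{n+1},\psi^{n+1}) - b_2(p^{f,n+1},\psi^{n+1}) = F_{\hat{r}^{n+1}}(\bu^{s,n+1}).
\end{equation*}
Using coercivity of $a_1$ from \eqref{eq:coercivity} (Korn), the identity $a_3(\psi^{n+1},\psi^{n+1})=\lambda^{-1}\Vert\psi^{n+1}\Vert^2_{0,\Omega}\ge 0$, the continuity bound for $F_{\hat{r}^{n+1}}$ in \eqref{eq:continuity}, and Young's inequality, I would absorb $\Vert\bu^{s,n+1}\Vert_{1,\Omega}$ into the left, and split the cross term $\tfrac{\alpha}{\lambda}\int_\Omega p^{f,n+1}\psi^{n+1}$ so that half of $\lambda^{-1}\Vert\psi^{n+1}\Vert^2_{0,\Omega}$ is reabsorbed. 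This yields
\begin{equation*}
\tfrac{\mu C_{k,1}}{2}\Vert\bu^{s,n+1}\Vert^2_{1,\Omega} \le C\big(\Vert\bb^{n+1}\Vert^2_{0,\Omega}+\Vert\hat{r}^{n+1}\Vert^2_{0,\Omega}+\Vert p^{f,n+1}\Vert^2_{0,\Omega}\big).
\end{equation*}
Crucially, no inf--sup argument is invoked here and every $1/\lambda$ and $\alpha^2/\lambda$ factor is bounded uniformly from above, which is precisely what keeps the constant independent of $\lambda$ (and locking-free).

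Next I would treat the fluid pressure. Testing the second equation of \eqref{eq:uncoupled2} with $q^f=p^{f,n+1}$, the storage term $\tilde a_2$ is rewritten through the discrete identity \eqref{discrete-identity} as $(c_0+\tfrac{\alpha^2}{\lambda})\big(\tfrac12\delta_t\Vert p^{f,n+1}\Vert^2_{0,\Omega}+\tfrac{\Delta t}{2}\Vert\delta_t p^{f,n+1}\Vert^2_{0,\Omega}\big)$, and $a_2(p^{f,n+1},p^{f,n+1})\ge \tfrac{\kappa_1 c_p}{\eta}\Vert p^{f,n+1}\Vert^2_{1,\Omega}$ by \eqref{eq:coercivity}. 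Multiplying by $\Delta t$, summing over $m=0,\dots,n$, telescoping the difference quotient, and discarding the nonnegative $\Vert\delta_t p^{f,m+1}\Vert^2_{0,\Omega}$ contributions, I retain $\tfrac{c_0}{2}\Vert p^{f,n+1}\Vert^2_{0,\Omega}+\tfrac{\kappa_1 c_p}{\eta}\Delta t\sum_m\Vert p^{f,m+1}\Vert^2_{1,\Omega}$ on the left, against $\tfrac12(c_0+\tfrac{\alpha^2}{\lambda})\Vert p^{f,0}\Vert^2_{0,\Omega}$, the coupling $\tfrac{\alpha}{\lambda}\Delta t\sum_m\int_\Omega\delta_t\psi^{m+1}\,p^{f,m+1}$, and the load $\Delta t\sum_m G_{\ell^{m+1}}(p^{f,m+1})$ on the right.

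The step I expect to be most delicate is the coupling sum $\tfrac{\alpha}{\lambda}\Delta t\sum_m\int_\Omega\delta_t\psi^{m+1}\,p^{f,m+1}$, which carries the ``extra $1/\Delta t$'' from the backward Euler difference of $\psi$ flagged in the introduction. Here I would use $\Delta t\,\delta_t\psi^{m+1}=\psi^{m+1}-\psi^m$ to cancel the $\Delta t$ exactly, so that the sum equals $\tfrac{\alpha}{\lambda}\sum_m\int_\Omega(\psi^{m+1}-\psi^m)p^{f,m+1}$; Cauchy--Schwarz and Young then bound it by $C\big(\Vert\psi^0\Vert^2_{0,\Omega}+\sum_m\Vert\psi^{m+1}\Vert^2_{0,\Omega}+\sum_m\Vert p^{f,m+1}\Vert^2_{0,\Omega}\big)$, all of which already appear on the right of the claim — this is exactly why the constant ends up independent of $\Delta t$. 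The load term is handled by Young and Poincar\'e, keeping half of the dissipation $\tfrac{\kappa_1 c_p}{2\eta}\Delta t\sum_m\Vert p^{f,m+1}\Vert^2_{1,\Omega}$ on the left and sending $\tfrac{\eta}{2\kappa_1 c_p}\Delta t\sum_m\Vert\ell^{m+1}\Vert^2_{0,\Omega}$ to the right. Adding the displacement and pressure estimates, and noting that $\Vert\bu^{s,0}\Vert_{1,\Omega}$ may be freely appended to the right-hand side, gives \eqref{stability-uh-ph}; the only care needed is to keep $\lambda$ bounded below so that all $1/\lambda$ weights stay uniformly controlled.
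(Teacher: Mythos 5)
Your proof is correct, but it takes a genuinely different route from the paper's. The paper tests the momentum equation with the discrete velocity $\delta_t\bu^{s,n+1}$ and the constraint equation with $\delta_t\psi^{n+1}$ (keeping $q^f=p^{f,n+1}$); the price is the mixed term $-\Delta t\sum_{m}\int_\Omega\delta_t\psi^{m+1}\,\mathrm{div}\,\bu^{s,m+1}$, which it controls by discrete summation by parts and then absorbs using the extra dissipation $\tfrac{\mu C_{k,1}\Delta t^2}{4}\sum_m\Vert\delta_t\bu^{s,m+1}\Vert^2_{1,\Omega}$ deliberately retained from the displacement step, with Young parameters scaled by $\Delta t$. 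You instead test with $\bu^{s,n+1}$ and $\psi^{n+1}$, so the two $b_1$ pairings cancel exactly and no time difference of $\mathrm{div}\,\bu^s$ ever appears; the only surviving difference quotient sits in $\tilde b_2$, where your observation that $\Delta t\,\delta_t\psi^{m+1}=\psi^{m+1}-\psi^m$ cancels the $1/\Delta t$ replaces the paper's summation-by-parts step and lands directly on the $\sum_m\Vert\psi^{m+1}\Vert^2_{0,\Omega}$ and $\sum_m\Vert p^{f,m+1}\Vert^2_{0,\Omega}$ terms already permitted on the right of \eqref{stability-uh-ph}. Your route is more elementary: all Young constants are $\Delta t$-free, so independence of $C_2$ from $\Delta t$ is immediate, and the displacement bound is obtained pointwise in $n$ rather than by telescoping. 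What it gives up is the by-product control of $\Delta t^2\sum_m\Vert\delta_t\bu^{s,m+1}\Vert^2_{1,\Omega}$ and $\Delta t^2\sum_m\Vert\delta_t p^{f,m+1}\Vert^2_{0,\Omega}$ that the paper's computation produces, but since neither quantity appears in \eqref{stability-uh-ph} nor is used in the subsequent Gronwall argument, nothing downstream is lost. The only caveats, both of which you flag, are that the $1/\lambda$ and $\alpha^2/\lambda$ factors must be read as bounded above (the paper's $C_2$ is likewise only claimed uniform as $\lambda\to\infty$), and that the single term $\Vert p^{f,n+1}\Vert^2_{0,\Omega}$ on the right of your displacement estimate is majorised by the sum $\sum_m\Vert p^{f,m+1}\Vert^2_{0,\Omega}$ in the target inequality.
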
}
\begin{proof}
		We begin by taking \cblue{$\bv^s=\delta_t\bu^{s,n+1}$} in the first row of \eqref{eq:uncoupled2}, and then applying Cauchy-Schwarz and Young inequalities, to get
		\begin{align*}
		\begin{split}
		&\mu\delta_t \Vert \boldsymbol{\varepsilon}(\bu^{s,n+1}) \Vert^2_{0,\Omega} + \mu C_{k,1}\Delta t\Vert\delta_t \bu^{s,n+1}\Vert^2_{1,\Omega}\leq \frac{1}{2\delta_1}\Vert\psi^{n+1}\Vert_{0,\Omega}^2+\frac{\delta_1}{2}\Vert\delta_t\bu^{s,n+1}\Vert_{1,\Omega}^2 \\
		&\qquad\qquad\qquad\qquad+ \frac{\tau^2}{2\delta_2}\Vert \hat{r}^{n+1} \Vert_{0,\Omega}^2+\frac{C_{k,2}\delta_2}{2}\Vert\delta_t\bu^{s,n+1}\Vert_{1,\Omega}^2 + \frac{\rho^2}{2\delta_3}\Vert\bb^{n+1}\Vert_{0,\Omega}^2+\frac{\delta_3}{2}\Vert\delta_t\bu^{s,n+1}\Vert_{1,\Omega}^2.
		\end{split}
		\end{align*}
		Next, defining $\delta_1:= \frac{\mu C_{k,1}\Delta t}{2}, \delta_2:=\frac{\mu C_{k,1}\Delta t}{2C_{k,2}}$ and $\delta_3:=\frac{\mu C_{k,1}\Delta t}{2}$, and then, multiplying the resulting inequality by $\Delta t$ and summing over $n$, we finally obtain
		\begin{align}\label{stability-uh-final}
		\begin{split}
		&\mu C_{k,1}\Vert \bu^{s,n+1}\Vert^2_{1,\Omega} + \frac{\mu C_{k,1}\Delta t^2}{4}\sum_{m=0}^{n}\Vert\delta_t \bu^{s,m+1}\Vert^2_{1,\Omega} \\
		&\qquad\qquad  \leq  C_1\Big\{\Vert\bu^{s,0}\Vert^2_{1,\Omega} + \sum_{m=0}^{n}\Vert\psi^{m+1}\Vert_{0,\Omega}^2+\sum_{m=0}^{n}\Vert \hat{r}^{m+1}\Vert_{0,\Omega}^2+\sum_{m=0}^{n} \Vert\bb^{m+1}\Vert_{0,\Omega}^2\Big\},
		\end{split}
		\end{align}
		where $C_1$ is a constant depending on $\mu, C_{k,1},C_{k,2}, \rho$, and $ \tau$.
		On the other hand, by taking $q^f=p^{f,n+1}$ and $\phi=\delta_t\psi^{n+1}$ in the second and third equation of \eqref{eq:uncoupled2}, respectively, we get
		\begin{align}
		\nonumber
		&\frac{1}{2\lambda}\delta_t\Vert\psi^{n+1}\Vert^2_{0,\Omega} + \frac{\Delta t}{2\lambda}\Vert\delta_t\psi^{n+1}\Vert^2_{0,\Omega} + \frac{1}{2}\Big(c_0+\frac{\alpha^2}{\lambda}\Big)\Big(\delta_t\Vert p^{f,n+1}\Vert^2_{0,\Omega} + \Delta t \Vert \delta_t p^{f,n+1}\Vert^2_{0,\Omega}\Big)+\frac{\kappa_1}{\eta}|p^{f,n+1}|^2_{1,\Omega}\\
		& \qquad\qquad \qquad \leq \frac{2\alpha}{\lambda}\Vert p^{f,n+1}\Vert_{0,\Omega}\Vert \delta_t \psi^{n+1}\Vert_{0,\Omega} + \norm{\ell^{n+1}}_{0,\Omega}\Vert p^{f,n+1}\Vert_{0,\Omega}- \int_{\Omega}\delta_t \psi^{n+1} \mathrm{div}\,\bu^{s,n+1}.\label{preliminar-stab-p-psi}
		\end{align}
		\cblue{Rewriting the first term on the right-hand side as $$\frac{2\alpha}{\lambda}\Vert p^{f,n+1}\Vert_{0,\Omega}\Vert \delta_t \psi^{n+1}\Vert_{0,\Omega} = 2 \bigg(\frac{1}{\sqrt{\lambda}} \Vert \delta_t \psi^{n+1}\Vert_{0,\Omega}\bigg) \bigg(\frac{\alpha}{\sqrt{\lambda}} \Vert p^{f,n+1}\Vert_{0,\Omega} \bigg),$$ and then employing the} Young's inequality in the first two terms on the right-hand side of \eqref{preliminar-stab-p-psi}, we obtain
		\begin{align*}
		\begin{split}
		&\frac{1}{2\lambda}\delta_t\Vert\psi^{n+1}\Vert^2_{0,\Omega} + \frac{\Delta t}{2\lambda}\Vert\delta_t\psi^{n+1}\Vert^2_{0,\Omega} + \frac{1}{2}\Big(c_0+\frac{\alpha^2}{\lambda}\Big)\Big(\delta_t\Vert p^{f,n+1}\Vert^2_{0,\Omega} + \Delta t \Vert \delta_t p^{f,n+1}\Vert^2_{0,\Omega}\Big)+\frac{\kappa_1}{\eta}|p^{f,n+1}|^2_{1,\Omega}\\
		& \qquad \leq \frac{\delta_1}{ \lambda} \Vert \delta_t \psi^{n+1}\Vert^2_{0,\Omega}+   \frac{\alpha^2}{ \lambda\delta_1}  \Vert p^{f,n+1}\Vert^2_{0,\Omega}+\frac{1}{2\delta_2} \norm{\ell^{n+1}}^2_{0,\Omega}+\frac{\delta_2}{2}\Vert p^{f,n+1}\Vert^2_{0,\Omega}- \int_{\Omega}\delta_t \psi^{n+1} \mathrm{div}\,\bu^{s,n+1}.
		\end{split}
		\end{align*}
		Now, choosing $\delta_1:=\frac{\Delta t}{2}$ and $\delta_2:=\frac{\kappa_1c_p}{\eta}$, and then, multiplying the resulting inequality by $\Delta t$ and summing over $n$, we deduce the following preliminar bound
		\begin{align}
		&\frac{1}{2\lambda}\Vert\psi^{n+1}\Vert^2_{0,\Omega} + \frac{1}{2}\Big(c_0+\frac{\alpha^2}{\lambda}\Big)\Big(\Vert p^{f,n+1}\Vert^2_{0,\Omega} + \Delta t^2 \sum_{m=0}^{n}\Vert \delta_t p^{f,m+1}\Vert^2_{0,\Omega}\Big)+\frac{\kappa_1c_p\Delta t}{2\eta}\sum_{m=0}^{n}\Vert p^{f,m+1}\Vert^2_{1,\Omega}\nonumber\\
		& \qquad \leq \frac{1}{2\lambda}\Vert\psi^{0}\Vert^2_{0,\Omega}+ \frac{1}{2}\Big(c_0+\frac{\alpha^2}{\lambda}\Big)\Vert p^{f,0}\Vert^2_{0,\Omega}+ \frac{2\alpha^2}{\lambda} \sum_{m=0}^{n}\Vert p^{f,m+1}\Vert^2_{0,\Omega}+\frac{\eta\Delta t}{2\kappa_1c_p}\sum_{m=0}^{n} \norm{\ell^{m+1}}^2_{0,\Omega}\label{preliminar-stab-p-psi-3}\\
		&\qquad \quad - \Delta t\sum_{m=0}^{n}\int_{\Omega}\delta_t \psi^{m+1} \mathrm{div}\,\bu^{s,m+1}.\nonumber
		\end{align}
		Finally, for the last term on the right-hand side of \eqref{preliminar-stab-p-psi-3}, we proceed similarly to \cite[Section 9]{ambartsumyan18}, applying summation by parts as well as the initial conditions \eqref{eq:initial}, to obtain that
		\begin{align*}
		\begin{split}
		&-\Delta t\sum_{m=0}^{n}\int_{\Omega}\delta_t \psi^{m+1} \mathrm{div}\,\bu^{s,m+1}= -\int_{\Omega}\psi^{n+1} \mathrm{div}\,\bu^{s,n+1}+\Delta t\sum_{m=0}^{n-1}\int_{\Omega}\psi^{m+1} \delta_t \mathrm{div}\,\bu^{s,m+1}\\
		&\qquad\leq \frac{1}{2\delta_3}\Vert\psi^{n+1}\Vert^2_{0,\Omega}+ \frac{\delta_3}{2}\Vert\bu^{s,n+1}\Vert^2_{1,\Omega}+\frac{1}{2\delta_4}\Delta t\sum_{m=0}^{n-1}\Vert\psi^{m+1}\Vert^2_{0,\Omega}+\frac{\delta_4}{2}\Delta t\sum_{m=0}^{n-1}\Vert\delta_t\bu^{s,m+1}\Vert^2_{1,\Omega},
		\end{split}
		\end{align*}
		and then, taking $\delta_3:= \mu C_{k,1}$ and $\delta_4:=\frac{\mu C_{k,1}\Delta t}{2}$, we arrive at the following estimate
		\begin{align}
		&\frac{1}{2\lambda}\Vert\psi^{n+1}\Vert^2_{0,\Omega} + \frac{1}{2}\Big(c_0+\frac{\alpha^2}{\lambda}\Big)\Big(\Vert p^{f,n+1}\Vert^2_{0,\Omega} + \Delta t^2 \sum_{m=0}^{n}\Vert \delta_t p^{f,m+1}\Vert^2_{0,\Omega}\Big)+\frac{\kappa_1c_p\Delta t}{2\eta}\sum_{m=0}^{n}\Vert p^{f,m+1}\Vert^2_{1,\Omega}\nonumber\\
		& \leq \frac{1}{2\lambda}\Vert\psi^{0}\Vert^2_{0,\Omega}+ \frac{1}{2}\Big(c_0+\frac{\alpha^2}{\lambda}\Big)\Vert p^{f,0}\Vert^2_{0,\Omega}+ \frac{2 \alpha^2}{\lambda} \sum_{m=0}^{n}\Vert p^{f,m+1}\Vert^2_{0,\Omega}+\frac{\eta\Delta t}{2\kappa_1c_p}\sum_{m=0}^{n} \norm{\ell^{m+1}}^2_{0,\Omega}\label{preliminar-stab-p-psi-4}\\
		&\, + \frac{1}{2\mu C_{k,1}}\Vert\psi^{n+1}\Vert^2_{0,\Omega}+ \frac{\mu C_{k,1}}{2}\Vert\bu^{s,n+1}\Vert^2_{1,\Omega}+\frac{1}{\mu C_{k,1}}\sum_{m=0}^{n-1}\Vert\psi^{m+1}\Vert^2_{0,\Omega}+\frac{\mu C_{k,1}\Delta t^2}{4}\sum_{m=0}^{n-1}\Vert\delta_t \bu^{s,m+1}\Vert^2_{1,\Omega}.\nonumber
		\end{align}
		Finally, the result follows after adding \eqref{stability-uh-final} and \eqref{preliminar-stab-p-psi-4}, and taking
		$$C_2:= \max\{C_1,c_0+\frac{1}{2\lambda},\frac{1}{2}(c_0+\frac{\alpha^2}{\lambda}),c_0+\frac{2 \alpha^2}{ \lambda}, \frac{\eta}{2\kappa_1c_p},\frac{2}{\mu C_{k,1}}\},$$
		where $C_2$ must be understood as a constant independent of $\lambda$, when $\lambda$ goes to infinity.
	\end{proof}
		
		\cblue{\begin{lemma}
			Assume that $(\bu^{s,n+1}, p^{f,n+1}, \psi^{n+1}) \in \mathbb{V} $ is the unique solution given by Lemma \ref{uncoupled-P}. Then, there exists $C>0$, independent of $\Delta t$ and $\lambda$, such that for each $n$,
			\begin{align}\label{eq:CD1}
			\begin{split}
			&\Vert \bu^{s,n+1}\Vert_{1,\Omega} + \sqrt{c_0}\Vert p^{f,n+1}\Vert_{0,\Omega} + \Vert \psi^{n+1}\Vert_{0,\Omega} + \Vert p^{f} \Vert_{l^2(H^1(\Omega))}\\
			&\qquad  \leq C \sqrt{\exp} \Big\{\Vert\bu^{s,0}\Vert_{1,\Omega} + \Vert p^{f,0}\Vert_{0,\Omega}+ \Vert \psi^{0}\Vert_{0,\Omega}+ \sum_{m=0}^{n} \Vert\bb^{m+1}\Vert_{0,\Omega} + \norm{\ell}_{\ell^2(L^2(\Omega))}+\sum_{m=0}^{n}\Vert \hat{r}^{m+1}\Vert_{0,\Omega} \Big\}.
			\end{split}
			\end{align}
		\end{lemma}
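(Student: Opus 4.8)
The plan is to promote the squared \emph{a priori} bound \eqref{stability-uh-ph} of the previous lemma into the un-squared continuous-dependence estimate \eqref{eq:CD1}. Two ingredients are missing from \eqref{stability-uh-ph}: first, it does not control $\psi^{n+1}$, which appears on the left of \eqref{eq:CD1}; and second, its right-hand side still contains the solution-dependent accumulations $\sum_{m=0}^n\Vert\psi^{m+1}\Vert_{0,\Omega}^2$ and $\sum_{m=0}^n\Vert p^{f,m+1}\Vert_{0,\Omega}^2$. I would supply the former through the inf-sup condition and remove the latter through a discrete Gr\"onwall argument, finishing by taking square roots. Observe that \eqref{stability-uh-ph} already delivers three of the four target norms on the left, since its cumulative term $\tfrac{\kappa_1 c_p\Delta t}{2\eta}\sum_{m=0}^n\Vert p^{f,m+1}\Vert_{1,\Omega}^2$ is precisely $\Vert p^{f}\Vert_{\ell^2(H^1(\Omega))}^2$.

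First I would estimate $\Vert\psi^{n+1}\Vert_{0,\Omega}$ from the first equation of \eqref{eq:uncoupled2}, written as $b_1(\bv^s,\psi^{n+1}) = F_{\hat{r}^{n+1}}(\bv^s) - a_1(\bu^{s,n+1},\bv^s)$ for all $\bv^s\in\bH^1_\Gamma(\Omega)$. The inf-sup condition \eqref{inf-sup} combined with the continuity bounds \eqref{eq:continuity} then gives
\begin{equation*}
\beta\,\Vert\psi^{n+1}\Vert_{0,\Omega} \le \rho\Vert\bb^{n+1}\Vert_{0,\Omega} + \tau\sqrt{C_{k,2}}\,\Vert\hat{r}^{n+1}\Vert_{0,\Omega} + 2\mu C_{k,2}\,\Vert\bu^{s,n+1}\Vert_{1,\Omega},
\end{equation*}
with constants independent of $\lambda$ and $\Delta t$. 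Taken at the final level, this furnishes the $\Vert\psi^{n+1}\Vert_{0,\Omega}$ term of \eqref{eq:CD1}; summed over $m$ and squared, it lets me replace $\sum_{m=0}^n\Vert\psi^{m+1}\Vert_{0,\Omega}^2$ on the right of \eqref{stability-uh-ph} by $C\sum_{m=0}^n\Vert\bu^{s,m+1}\Vert_{1,\Omega}^2$ plus the admissible data sums $\sum_m\Vert\bb^{m+1}\Vert_{0,\Omega}^2$ and $\sum_m\Vert\hat{r}^{m+1}\Vert_{0,\Omega}^2$.

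After this substitution the only solution-dependent quantities on the right are $\sum_{m=0}^n\Vert\bu^{s,m+1}\Vert_{1,\Omega}^2$ and $\sum_{m=0}^n\Vert p^{f,m+1}\Vert_{0,\Omega}^2$, which are exactly the time-pointwise energies bounded on the left. Writing $y_{m+1}:=\tfrac{\mu C_{k,1}}{2}\Vert\bu^{s,m+1}\Vert_{1,\Omega}^2+\tfrac{c_0}{2}\Vert p^{f,m+1}\Vert_{0,\Omega}^2$, the inequality takes the form $y_{n+1}+\Vert p^f\Vert_{\ell^2(H^1(\Omega))}^2 \le C\{\text{data}\}+C\sum_{m=0}^n y_{m+1}$, to which I would apply a discrete Gr\"onwall inequality (arranging the accumulated increments to carry the factor $\Delta t$, so that $\sum_m C\Delta t = C\,t_{n+1}$) and obtain the data multiplied by a factor of type $\exp(C\,t_{n+1})$. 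Taking square roots and using $\sqrt{a+b}\le\sqrt a+\sqrt b$ on each summand reconstructs the four left-hand norms and the claimed right-hand side of \eqref{eq:CD1}.

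The step I expect to be the main obstacle is closing the Gr\"onwall loop while preserving independence of $\lambda$ and $\Delta t$. The inf-sup estimate reintroduces $\Vert\bu^{s,m+1}\Vert_{1,\Omega}$ on the right, so the absorption of the top-index term and the correct bookkeeping of the $\Delta t$ weights (needed to turn the geometric growth into the exponential $\exp(C\,t_{n+1})$) must be handled with care; in particular one must track that the coefficient $2\alpha^2/\lambda$ carried by the pressure accumulation, and all the constants inherited from \eqref{stability-uh-ph} through the constant $C_2$, stay bounded as $\lambda\to\infty$, as already recorded in the previous lemma.
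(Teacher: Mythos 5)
Your proposal is correct and follows essentially the same route as the paper: the $\psi^{n+1}$ bound is obtained from the inf-sup condition \eqref{inf-sup} together with the first equation of \eqref{eq:uncoupled2} and the continuity of $a_1$, this is combined with \eqref{stability-uh-ph}, and the remaining solution-dependent accumulations are removed by Gr\"onwall's inequality before taking square roots. The only (immaterial) difference is bookkeeping: the paper keeps $\sum_m\Vert\psi^{m+1}\Vert_{0,\Omega}^2$ on the right and treats it as a Gr\"onwall variable, whereas you first eliminate it via the pointwise inf-sup bound.
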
}
		\begin{proof}
		Having established the bound given by \eqref{stability-uh-ph}, it only remains to obtain an upper bound for $\Vert\psi^{n+1}\Vert_{0,\Omega}$, independent of $\lambda$. Thus, taking $\phi=\psi^{n+1}$ in the inf-sup condition \eqref{inf-sup}, and using the first row of \eqref{eq:uncoupled2} and the continuity of $a_1$, we easily obtain
		\begin{align*}
		\begin{split}
		&\beta\Vert\psi^{n+1}\Vert_{0,\Omega}\leq \sup_{\bv^s \in \mathbf{V}} \frac{b_1(\bv^s,\psi^{n+1})}{\|\bv^s\|_{1,\Omega}}  = \sup_{\bv^s \in \mathbf{V}} \frac{-a_1(\bu^{s,n+1},\bv^{s})+F_{\hat{r}^{n+1}}(\bv^{s})}{\|\bv^s\|_{1,\Omega}}\\
		&\qquad  \leq 2\mu C_{k,2} \Vert\boldsymbol{\varepsilon}(\bu^{s,n+1})\Vert_{0,\Omega}+\sqrt{C_{k,2}}\tau \Vert \hat{r}^{n+1}\Vert_{0,\Omega}+\rho\Vert\bb^{n+1}\Vert_{0,\Omega},
		\end{split}
		\end{align*}
		or, equivalently,
		\begin{align}\label{inf-sup-psi}
		&\Vert\psi^{n+1}\Vert^2_{0,\Omega}\leq C_3\Big\{\Vert\bu^{s,n+1}\Vert^2_{1,\Omega}+ \Vert \hat{r}^{n+1}\Vert^2_{0,\Omega}+\Vert\bb^{n+1}\Vert^2_{0,\Omega}\Big\},
		\end{align}
		where $C_3$ is a constant depending on $\beta,C_{k,1},C_{k,2}, \mu, \tau$ and $\rho$. In this way, from \eqref{stability-uh-ph} and \eqref{inf-sup-psi} we finally obtain an estimate concerning the stability of the poroelasticity problem
		\begin{align}
\nonumber
		&\Vert \bu^{s,n+1}\Vert^2_{1,\Omega} + c_0\Vert p^{f,n+1}\Vert^2_{0,\Omega} + \Vert \psi^{n+1}\Vert^2_{0,\Omega} +\Delta t\sum_{m=0}^{n}\Vert p^{f,m+1}\Vert^2_{1,\Omega} \\
\label{stability-result}		&\qquad \leq C_4\Big\{\Vert\bu^{s,0}\Vert^2_{1,\Omega} + \Vert p^{f,0}\Vert^2_{0,\Omega}+ \Vert \psi^{0}\Vert^2_{0,\Omega}+ \sum_{m=0}^{n}\Vert\psi^{m+1}\Vert_{0,\Omega}^2+\sum_{m=0}^{n}\Vert p^{f,m+1}\Vert^2_{0,\Omega}\\
\nonumber		&+\sum_{m=0}^{n}\Vert \hat{r}^{m+1}\Vert_{0,\Omega}^2+\sum_{m=0}^{n} \Vert\bb^{m+1}\Vert_{0,\Omega}^2 + \Delta t\sum_{m=0}^{n} \norm{\ell^{m+1}}^2_{0,\Omega} \Big\} + C_3\Big\{\Vert \hat{r}^{n+1}\Vert^2_{0,\Omega}+\Vert\bb^{n+1}\Vert^2_{0,\Omega}\Big\}.
		\end{align}
		Finally, the stability result \eqref{eq:CD1} follows by applying Gronwall's inequality to \eqref{stability-result}.
	\end{proof}
	\begin{lemma} \label{uncoupled-ADR}
		For any $\bu^{s,n+1} \in \bV$, the uncoupled ADR system \eqref{eq:uncoupled1} has a unique solution.  Moreover there exists $C > 0$, independent of $\Delta t$, such that for each $n$, \begin{align}\label{bound-w1h-w2h-stability}
		&\Vert w_{1}^{n+1}\Vert_{0,\Omega}+\Vert w_{2}^{n+1}\Vert_{0,\Omega}+ \Vert \nabla w_{1} \Vert_{\ell^2(L^2(\Omega))}+ \Vert \nabla w_{2}\Vert_{\ell^2(L^2(\Omega))}\leq C\sqrt{\exp}\Big\{n\Delta t + \Vert w_{1}^{0}\Vert_{0,\Omega}+\Vert w_{2}^{0}\Vert_{0,\Omega}\Big \}.
		\end{align}
	\end{lemma}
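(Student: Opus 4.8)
The plan is to exploit that, once $\bu^{s,n+1}$ is fixed (by Lemma \ref{uncoupled-P}) and the reaction terms are frozen at the known data $(\hat{w}_1^{n+1},\hat{w}_2^{n+1})\in\mathcal{S}$, the two equations in \eqref{eq:uncoupled1} decouple and share the same structure; I would therefore carry out the argument for the $w_1$-equation and note that the $w_2$-equation is handled identically, replacing $D_1^{\min},f$ by $D_2^{\min},g$. Moving the terms carrying $w_1^n$ and the source to the right-hand side, the $w_1$-equation is a linear second-order elliptic problem with bilinear form $\mathcal{B}_1(w,s):=\tfrac{1}{\Delta t}\int_\Omega w\,s + a_4(w,s) + c(w,s,\bu^{s,n+1})$ on $H^1(\Omega)\times H^1(\Omega)$. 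I would first settle well-posedness and then derive the energy bound \eqref{bound-w1h-w2h-stability}.

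For well-posedness I would argue through the Fredholm alternative. The form $\mathcal{B}_1$ is continuous: the first two terms by \eqref{eq:continuity}, and the convective term by H\"older together with the embedding $H^1(\Omega)\hookrightarrow L^4(\Omega)$, which gives $|c(w,s,\bu^{s,n+1})|\le C\Vert\delta_t\bu^{s,n+1}\Vert_{1,\Omega}\Vert w\Vert_{1,\Omega}\Vert s\Vert_{1,\Omega}$. The strictly positive mass term $\tfrac{1}{\Delta t}\Vert\cdot\Vert_{0,\Omega}^2$ together with the coercivity of $a_4$ in \eqref{eq:coercivity} yields a G\aa rding inequality, while $c(\cdot,\cdot,\bu^{s,n+1})$ is a compact lower-order perturbation, since it factors through the compact embedding $H^1(\Omega)\hookrightarrow L^4(\Omega)$. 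Existence is then equivalent to uniqueness of the homogeneous problem, and uniqueness is classical for a linear elliptic equation whose zeroth-order coefficient $\tfrac{1}{\Delta t}$ is strictly positive and whose convective field $\delta_t\bu^{s,n+1}\in\bH^1(\Omega)\hookrightarrow\bL^{p}(\Omega)$, $p>d$, lies in the admissible class, so I would invoke the weak maximum principle / the standard results already cited in the paper (\cite{quarteroni94,ladyzenskaja}).

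For the a priori estimate I would test the $w_1$-equation with $s_1=w_1^{n+1}$. The discrete identity \eqref{discrete-identity} converts $\tilde{a}_4(w_1^{n+1},w_1^{n+1})$ into $\tfrac12\delta_t\Vert w_1^{n+1}\Vert_{0,\Omega}^2+\tfrac{\Delta t}{2}\Vert\delta_t w_1^{n+1}\Vert_{0,\Omega}^2$, while $a_4$ provides $D_1^{\min}|w_1^{n+1}|_{1,\Omega}^2$. The crucial manoeuvre is to integrate the convective contribution by parts, writing $c(w_1^{n+1},w_1^{n+1},\bu^{s,n+1})=-\tfrac12\int_\Omega(\vdiv\delta_t\bu^{s,n+1})(w_1^{n+1})^2+\tfrac12\int_\Sigma(\delta_t\bu^{s,n+1}\cdot\nn)(w_1^{n+1})^2$, the contribution on $\Gamma$ vanishing because $\delta_t\bu^{s,n+1}=\cero$ there. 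Each surviving term I would estimate by the embedding $H^1(\Omega)\hookrightarrow L^4(\Omega)$ (and the trace theorem on $\Sigma$) followed by Young's inequality, absorbing half of the diffusion seminorm and leaving a term $C\Vert w_1^{n+1}\Vert_{0,\Omega}^2$ whose constant is controlled through the stability bound \eqref{eq:CD1} for $\bu^{s,n+1}$. The source $J_{f^{n+1}}(w_1^{n+1})$ is treated with the growth condition in \eqref{eq:reaction-assumption}: since $(\hat{w}_1^{n+1},\hat{w}_2^{n+1})\in\mathcal{S}$ is bounded, one has $\Vert f^{n+1}\Vert_{0,\Omega}\le C$, so Cauchy--Schwarz and Young give $\le C+\tfrac12\Vert w_1^{n+1}\Vert_{0,\Omega}^2$. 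Multiplying by $\Delta t$, summing over $m=0,\dots,n$ and telescoping the $\delta_t\Vert w_1^{m+1}\Vert_{0,\Omega}^2$ term, the constant source accumulates into the $n\Delta t$ of \eqref{bound-w1h-w2h-stability}, the summed diffusion yields $\Vert\nabla w_1\Vert_{\ell^2(L^2(\Omega))}$, and the discrete Gronwall inequality closes the estimate with the factor $\sqrt{\exp}\{\cdots\}$. Repeating for $w_2$ completes the proof.

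The main obstacle is precisely the convective trilinear form $c(w_1^{n+1},w_1^{n+1},\bu^{s,n+1})$. A direct H\"older estimate of this term would leave a factor $\Vert\delta_t\bu^{s,n+1}\Vert_{1,\Omega}$ multiplying the full norm $\Vert w_1^{n+1}\Vert_{1,\Omega}$ and so would spoil the coercivity of the diffusion seminorm; the integration by parts is therefore indispensable, as it trades $\delta_t\bu^{s,n+1}\cdot\nabla w_1^{n+1}$ for $\vdiv\delta_t\bu^{s,n+1}$ and a boundary trace on $\Sigma$, allowing the gradient to be absorbed regardless of the size of the velocity. What remains is the $1/\Delta t$ inherited from the backward Euler discretisation of the solid velocity (noted already in the Introduction): it surfaces in the $L^2$ term feeding the discrete Gronwall argument, and keeping the resulting constant under control hinges on the a priori velocity estimate \eqref{eq:CD1} rather than on inverse powers of $\Delta t$ entering the data norms.
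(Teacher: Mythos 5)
Your energy-estimate skeleton (test with $w_1^{n+1}$, use \eqref{discrete-identity}, integrate the convective term by parts via \eqref{property-term}, multiply by $\Delta t$, sum, and apply the discrete Gronwall inequality) coincides with the paper's, but there are two genuine gaps. The first is the treatment of the convective term after integration by parts. You estimate $-\tfrac12\int_\Omega(\vdiv\delta_t\bu^{s,n+1})(w_1^{n+1})^2$ through $H^1(\Omega)\hookrightarrow L^4(\Omega)$ and claim the gradient part can be absorbed into the diffusion ``regardless of the size of the velocity'', leaving $C\Vert w_1^{n+1}\Vert_{0,\Omega}^2$ with $C$ controlled by \eqref{eq:CD1}. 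This does not close: with only $\delta_t\bu^{s,n+1}\in\bH^1(\Omega)$ you obtain $\tfrac12\Vert\vdiv\delta_t\bu^{s,n+1}\Vert_{0,\Omega}\Vert w_1^{n+1}\Vert_{L^4(\Omega)}^2$, and $\Vert w_1^{n+1}\Vert_{L^4(\Omega)}^2$ is not bounded by $\Vert w_1^{n+1}\Vert_{0,\Omega}^2$; interpolating ($\Vert w\Vert_{L^4(\Omega)}^2\le C\Vert w\Vert_{0,\Omega}\Vert w\Vert_{1,\Omega}$ in 2D) and absorbing the $H^1$ part into $D_1^{\min}|w_1^{n+1}|_{1,\Omega}^2$ leaves a Gronwall coefficient proportional to $\Vert\delta_t\bu^{s,n+1}\Vert_{1,\Omega}^2\sim(\Delta t)^{-2}$, which the single factor $\Delta t$ from the time summation cannot cancel. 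The paper instead makes the stronger, explicitly stated assumption that $\bu^{s,n+1},\bu^{s,n}$ are uniformly bounded in $\mathbf{W}^{1,\infty}(\Omega)$, so the term is bounded by $\tfrac12\Vert\delta_t\bu^{s,n+1}\Vert_{1,\infty,\Omega}\Vert w_1^{n+1}\Vert_{0,\Omega}^2\le\tfrac{C_1}{2\Delta t}\Vert w_1^{n+1}\Vert_{0,\Omega}^2$, and the $1/\Delta t$ is exactly neutralised by the $\Delta t$ of the summation. Without an $L^\infty$ bound on $\vdiv\delta_t\bu^{s,n+1}$ your argument cannot produce a constant of the form claimed; note that \eqref{eq:CD1} only controls $\Vert\bu^{s,n+1}\Vert_{1,\Omega}$. (Your remark that \eqref{property-term} as used in the paper silently drops the boundary contribution on $\Sigma$ is a fair observation, but it does not rescue the volume term.)

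The second gap is structural: in \eqref{eq:uncoupled1} the sources $f^{n+1},g^{n+1}$ are $f(w_1^{n+1},w_2^{n+1},\bu^{s,n+1})$ and $g(w_1^{n+1},w_2^{n+1},\bu^{s,n+1})$, i.e.\ they are evaluated at the unknowns; only the displacement is inherited from $(\hat w_1^{n+1},\hat w_2^{n+1})$ through $\hat r^{n+1}$. The paper accordingly treats \eqref{eq:uncoupled1} as a coupled \emph{semilinear} system (hence the appeal to \cite{ladyzenskaja}, and the use of the growth conditions \eqref{eq:reaction-assumption} to bound $\Vert f^{m+1}\Vert_{0,\Omega}^2\le C(1+\Vert w_1^{m+1}\Vert_{0,\Omega}^2+\Vert w_2^{m+1}\Vert_{0,\Omega}^2)$ inside the Gronwall step). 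Your reading --- reaction frozen at $(\hat w_1^{n+1},\hat w_2^{n+1})$, two decoupled linear problems, Fredholm alternative --- proves a different statement: your uniqueness argument for the homogeneous linear equation does not address uniqueness for the semilinear system (which requires playing the Lipschitz bound in \eqref{eq:reaction-assumption} against the $1/\Delta t$ mass term, or the quasilinear theory the paper invokes), and the two energy estimates must be added and the Gronwall argument applied jointly, because $f$ and $g$ couple $w_1$ and $w_2$.
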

	\begin{proof}
		Note that for each $n$, the  uncoupled ADR equations constitute a semilinear elliptic system; and owing to the uniform boundedness of the matrices $D_i(\bx),i=1,2$ together  with  the growth condition assumed for $f,g$; the problem \eqref{eq:uncoupled1} is uniquely solvable (see for instance,  \cite{ladyzenskaja}). On the other hand, for the continuous dependence, we begin by taking $s_{1}=w^{n+1}_{1}$ in the first equation of \eqref{eq:uncoupled1}, which yields
		\begin{align*}
		\int_\Omega  \delta_t  w_{1}^{n+1}w_{1}^{n+1} + \int_\Omega D_1(\bx)\nabla w_{1}^{n+1}\cdot \nabla w_{1}^{n+1}+  \int_\Omega  (\delta_t\bu^{s,n+1}\cdot\nabla w_{1}^{n+1})w_{1}^{n+1}=\int_{\Omega}f^{n+1}w_{1}^{n+1},
		\end{align*}
		and then, recalling that
		\begin{equation}\label{property-term}
		\int_\Omega  (\delta_t\bu^{s,n+1}\cdot\nabla w_{1}^{n+1})w_{1}^{n+1}=-\frac{1}{2}\int_{\Omega} \mathrm{div}\, ( \delta_t\bu^{s,n+1})(w_{1}^{n+1})^2,
		\end{equation}
		we can apply classical Cauchy-Schwarz inequality, to obtain
		\begin{align*}
		\begin{split}
		&\frac{1}{2}\delta_t\Vert w_{1}^{n+1}\Vert^2_{0,\Omega}+ \frac{1}{2}\Delta t \Vert \delta_t w_{1}^{n+1}\Vert^2_{0,\Omega}+  D_1^{\min} \Vert \nabla w_{1}^{n+1} \Vert^2_{0,\Omega}\\
		&\qquad\qquad \qquad\qquad\qquad\leq \frac{1}{2}\Vert\delta_t \bu^{s,n+1}\Vert_{1,\infty,\Omega}\Vert w_{1}^{n+1}\Vert^2_{0,\Omega} + \Vert f^{n+1}\Vert_{0,\Omega}\Vert w_{1}^{n+1}\Vert_{0,\Omega}.
		\end{split}
		\end{align*}
		Under the assumption that $\bu^{s,n+1}, \bu^{s,n}$ are uniformly bounded in $\mathbf{W}^{1,\infty}(\Omega)$, and after applying Young's inequality, we deduce the following result
		\begin{align*}
		\begin{split}
		&\frac{1}{2}\delta_t\Vert w_{1}^{n+1}\Vert^2_{0,\Omega}+ \frac{1}{2}\Delta t \Vert \delta_t w_{1}^{n+1}\Vert^2_{0,\Omega}+ D_1^{\min}\Vert \nabla w_{1}^{n+1}\Vert^2_{0,\Omega}\\
		&\qquad\qquad\qquad \leq \frac{C_1}{2\Delta t}\Vert w_{1}^{n+1}\Vert_{0,\Omega}^2+ \frac{1}{2}\Vert f^{n+1}\Vert^2_{0,\Omega}+\frac{1}{2}\Vert w_{1}^{n+1}\Vert^2_{0,\Omega}.
		\end{split}
		\end{align*}
		Finally, a preliminary stability result follows by summing over $n$ and multiplying by $\Delta t$, which is
		\begin{align}\label{bound-w1h}
		\begin{split}
		&\frac{1}{2}\Vert w_{1}^{n+1}\Vert^2_{0,\Omega}+ \frac{1}{2}\Delta t^2\sum_{m=0}^{n} \Vert \delta_t w_{1}^{m+1}\Vert^2_{0,\Omega}+ D_1^{\min}\Delta t \sum_{m=0}^{n}\Vert \nabla w_{1}^{m+1}\Vert^2_{0,\Omega}\\
		&\qquad \qquad \leq \frac{1}{2}\Vert w_{1}^{0}\Vert^2_{0,\Omega}+ \frac{1}{2}( C_1 + \Delta t) \sum_{m=0}^{n}\Vert w_{1}^{m+1}\Vert^2_{0,\Omega}+ \frac{\Delta t}{2}\sum_{m=0}^{n}\Vert f^{m+1}\Vert^2_{0,\Omega}.
		\end{split}
		\end{align}
		In much the same way as above, we obtain a stability result for $\Vert w_{2}^{n+1}\Vert_{0,\Omega}$
		\begin{align}\label{bound-w2h}
		\begin{split}
		&\frac{1}{2}\Vert w_{2}^{n+1}\Vert^2_{0,\Omega}+ \frac{1}{2}\Delta t^2\sum_{m=0}^{n} \Vert \delta_t w_{2}^{m+1}\Vert^2_{0,\Omega}+ \Delta t \sum_{m=0}^{n}\Vert \nabla w_{2}^{m+1}\Vert^2_{0,\Omega}\\
		&\qquad \qquad \leq \frac{1}{2}\Vert w_{2}^{0}\Vert^2_{0,\Omega}+ \frac{1}{2}(C_1+\Delta t ) \sum_{m=0}^{n}\Vert w_{2}^{m+1}\Vert^2_{0,\Omega}+ \frac{\Delta t}{2}\sum_{m=0}^{n}\Vert g^{m+1}\Vert^2_{0,\Omega},
		\end{split}
		\end{align}
		and then, from \eqref{bound-w1h} and \eqref{bound-w2h}, we get a stability bound for the uncoupled problem   \eqref{eq:uncoupled1}
		\begin{align}\label{bound-w1h-w2h}
		\begin{split}
		&\frac{1}{2}\Vert w_{1}^{n+1}\Vert^2_{0,\Omega}+\frac{1}{2}\Vert w_{2}^{n+1}\Vert^2_{0,\Omega}+ D^{\min}\Delta t \sum_{m=0}^{n}(  \Vert \nabla w_{1}^{m+1}\Vert^2_{0,\Omega}+ \Vert \nabla w_{2}^{m+1} \Vert^2_{0,\Omega}) \\
		& \qquad \qquad\qquad \qquad \leq C_2\Big\{n\Delta t+ \Vert w_{1}^{0}\Vert^2_{0,\Omega}+\Vert w_{2}^{0}\Vert^2_{0,\Omega}+\sum_{m=0}^{n}\Big(\Vert w_{1}^{m+1}\Vert^2_{0,\Omega}+\Vert w_{2}^{m+1}\Vert^2_{0,\Omega}\Big) \Big \},
		\end{split}
		\end{align}
		where we have used the growth condition on $f$ and $g$, and $D^{\min}:=\min\{D_1^{\min},D_2^{\min}\}$.
		Finally, the stability of \eqref{eq:uncoupled1} given by \eqref{bound-w1h-w2h-stability} follows from an application of Gronwall's inequality to \eqref{bound-w1h-w2h}.
	\end{proof}
	
	\subsection{\cblue{Existence of a weak solution of  fully coupled system}}
\cblue{The demonstration of the existence of a weak solution of fully coupled semi-discrete system \eqref{weak-u}-\eqref{weak-w2} relies on  fixed-point arguments. The structure of the proof requires to define  the operator $T: \mathcal{S} \rightarrow \mathcal{S}$, that for each $n$ gives
	$T(\hat{w}_1^{n+1}, \hat{w}_2^{n+1}) = (w_1^{n+1},w_2^{n+1}),$
	for a fixed pair $(\hat{w}_1^{n+1}, \hat{w}_2^{n+1}) \in \mathcal{S}$, and where $(w_1^{n+1}, w_2^{n+1}) \in [H^1(\Omega)]^2$ is the solution of \eqref{eq:newauxiliary1}-\eqref{eq:newauxiliary2} with a given displacement $\bu^{s,n+1}$ (that is,
	the solution of the uncoupled poroelastic problem \eqref{eq:uncoupled2}). Our objective is to show that $T$ has a fixed point, and as a consequence implying that the system \eqref{weak-u}-\eqref{weak-w2} possesses a weak solution. This is framed appealing to generalised Schauder's  fixed-point theorem, stated as
	\begin{lemma}\label{lem:sch}
	Let $M$ be a closed convex set in a Banach space $X$ and assume that $L: M\rightarrow M$ is a continuous mapping such that $L(M)$  is a relatively compact subset of $M$. Then $L$ has a fixed point.
	\end{lemma}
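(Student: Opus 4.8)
The statement is the classical Schauder fixed-point theorem, so the plan is to reduce it to the finite-dimensional Brouwer fixed-point theorem by approximating $L$, up to arbitrarily small error, with continuous maps whose range lies in a compact convex subset of a finite-dimensional subspace. The relative compactness of $L(M)$ is precisely what makes such a finite-dimensional approximation possible, and Brouwer's theorem then applies directly to the approximants.

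First I would introduce the Schauder projection. Fix $\varepsilon>0$. Since $L(M)$ is relatively compact it is totally bounded, hence it can be covered by finitely many balls of radius $\varepsilon$ centred at points $y_1,\dots,y_N\in L(M)\subseteq M$. Setting $\mu_i(y):=\max\{0,\varepsilon-\|y-y_i\|\}$, I would define
$$P_\varepsilon(y):=\frac{\sum_{i=1}^N \mu_i(y)\,y_i}{\sum_{i=1}^N \mu_i(y)},\qquad y\in L(M),$$
which is well defined because each $y\in L(M)$ lies in at least one of the balls, so the denominator is strictly positive. Each $\mu_i$ is continuous, so $P_\varepsilon$ is continuous; its image lies in the convex hull $C_\varepsilon:=\mathrm{conv}\{y_1,\dots,y_N\}$, and since $M$ is convex and contains each $y_i$, we have $C_\varepsilon\subseteq M$. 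The essential estimate is $\|P_\varepsilon(y)-y\|\le\varepsilon$ for every $y\in L(M)$: only indices with $\|y-y_i\|<\varepsilon$ carry positive weight in the convex combination defining $P_\varepsilon(y)$, so $P_\varepsilon(y)$ is a convex combination of points each within distance $\varepsilon$ of $y$.

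Next I would apply Brouwer's theorem. The composition $P_\varepsilon\circ L$ maps $C_\varepsilon$ continuously into itself; as $C_\varepsilon$ is a compact convex subset of the finite-dimensional space $\mathrm{span}\{y_1,\dots,y_N\}$, Brouwer's fixed-point theorem provides $x_\varepsilon\in C_\varepsilon\subseteq M$ with $P_\varepsilon(L(x_\varepsilon))=x_\varepsilon$. Taking $y=L(x_\varepsilon)\in L(M)$ in the approximation estimate gives $\|L(x_\varepsilon)-x_\varepsilon\|=\|L(x_\varepsilon)-P_\varepsilon(L(x_\varepsilon))\|\le\varepsilon$, so $x_\varepsilon$ is an approximate fixed point.

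Finally I would pass to the limit. Choosing $\varepsilon=1/k$ yields a sequence $x_k\in M$ with $\|L(x_k)-x_k\|\le 1/k$. Since $L(M)$ is relatively compact, the sequence $\bigl(L(x_k)\bigr)$ admits a subsequence converging to some $z$; along that subsequence $x_k\to z$ as well, because $\|x_k-L(x_k)\|\to 0$, and $z\in M$ since $M$ is closed. Continuity of $L$ then gives $L(x_k)\to L(z)$ along the subsequence, whence $z=L(z)$. The only genuinely delicate ingredient is the construction of $P_\varepsilon$ together with its approximation property; once those are in place, the reduction to Brouwer and the concluding compactness extraction are entirely routine.
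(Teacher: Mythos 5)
Your proof is correct: it is the classical derivation of Schauder's fixed-point theorem via the Schauder projection $P_\varepsilon$ and Brouwer's theorem, followed by extraction of a convergent subsequence of approximate fixed points, and every step (well-definedness and continuity of $P_\varepsilon$, the estimate $\|P_\varepsilon(y)-y\|\le\varepsilon$, the self-map property of $P_\varepsilon\circ L$ on $C_\varepsilon\subseteq M$, and the limit passage using closedness of $M$ and continuity of $L$) is sound. Note that the paper offers no proof of this lemma at all --- it is quoted as the generalised Schauder fixed-point theorem and used as a black box to conclude that the solution operator $T$ has a fixed point --- so your argument simply supplies the standard reference proof rather than diverging from anything in the paper.
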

	In the context of the present problem, it is evident that $\mathcal{S}$ is a closed, bounded and convex subset of the Banach space $[L^2(\Omega)]^2$, so we further need to show that $T$ is a continuous self-map and that $T(\mathcal{S})$ is relatively compact in $S$. We dedicate the rest of this section to detail a proof of these essential steps, and we also collect other well-known required ingredients.}

\cblue{Before establishing that $T$ is a self-map, we proceed to define auxiliary
	 functions $m_{w_1}=m_{w_1}(\bx), m_{w_2}=m_{w_2}(\bx)$} in such a way that the solutions of the uncoupled ADR problem
	can be expanded as
	$$w_1= e^{\cblue{\theta} t}m_{w_1}, w_2= e^{\cblue{\theta} t}m_{w_2},$$
	for some constant $\theta >0$. Then, \cblue{since the expansion coefficients $m_{w_1},m_{w_2}$ are time-independent, it is readily seen that $w_1,w_2$ will also satisfy the auxiliary system}
	\begin{align*}
	\partial_t w_1 - \mathrm{div}\,(D_1(\bx) \nabla w_1) + \partial_t\bu^s \cdot \nabla w_1 = - \theta w_1 + e^{\cblue{-\theta} t} f(e^{\theta t} w_1, e^{\theta t} w_2),
	\\ \partial_t w_2 - \mathrm{div}\,(D_2(\bx) \nabla w_2) + \partial_t\bu^s \cdot \nabla w_2 = - \theta w_2 + e^{\cblue{-\theta} t} g(e^{\theta t} w_1, e^{\theta t} w_2),
	\end{align*}
	whose semi-discrete, variational counterpart is: Find $w_1^{n+1}, w_2^{n+1} $ such that
	\begin{align}
	\nonumber& \int_{\Omega} \delta_t w_1^{n+1} s_1 + \int_{\Omega} D_1(\bx) \nabla w_1^{n+1} \cdot \nabla s_1 + \int_{\Omega} (\delta_t\bu^{s,n+1}\cdot \nabla w_1^{n+1} ) s_1\\
	&\qquad\qquad= - \theta \int_{\Omega} w_1^{n+1} s_1 + \int_{\Omega} e^{\cblue{-\theta} t_{n+1}} f(e^{\theta t_{n+1}} w_1^{n+1}, e^{\theta t_{n+1}} w_2^{n+1})s_1 \quad \forall s_1 \in H^1(\Omega), \label{eq:auxiliary1}\\
	\nonumber & \int_{\Omega} \delta_tw_2^{n+1}s_2 + \int_{\Omega} D_2(\bx) \nabla w_2^{n+1} \cdot \nabla s_2 + \int_{\Omega} (\delta_t\bu^{s,n+1}\cdot \nabla w_2^{n+1} ) s_2 \\
	&\qquad\qquad = - \theta \int_{\Omega} w_2^{n+1} s_2 + \int_{\Omega} e^{\cblue{-\theta} t_{n+1}} g(e^{\theta t_{n+1}} w_1^{n+1}, e^{\theta t_{n+1}} w_2^{n+1}) s_2\quad \forall s_2 \in H^1(\Omega). \label{eq:auxiliary2}
	\end{align}
	The system can be equivalently stated in the form
	\begin{alignat}{5}
	\tilde{a}_4(w_1^{n+1},s_1) &\;+ &\; a_4(w_1^{n+1},s_1) &\;+ &\;  c(w_1^{n+1},s_1,\bu^{s,n+1}) &  &  &  = &\; \tilde{J}_{f^{n+1}}(s_1) &\quad\forall s_1 \in H^1(\Omega), \label{eq:newauxiliary1} \\
	\tilde{a}_5(w_2^{n+1},s_2) &\;+ &\; a_5(w_2^{n+1},s_2) &\;+ &\;   c(w_2^{n+1},s_2,\bu^{s,n+1}) && & =&\; \tilde{J}_{g^{n+1}}(s_2) &\quad\forall s_2\in H^1(\Omega), \label{eq:newauxiliary2}
	\end{alignat}
	where
	\begin{align*}
	\tilde{J}_{f^{n+1}}(s_1) &= - \theta \int_{\Omega} w_1^{n+1} s_1 + \int_{\Omega} e^{-\theta t_{n+1}} f(e^{\theta t_{n+1}} w_1^{n+1}, e^{\theta t_{n+1}} w_2^{n+1})s_1,
	\\ \tilde{J}_{g^{n+1}}(s_2) &= - \theta \int_{\Omega} w_2^{n+1} s_2 + \int_{\Omega} e^{-\theta t_{n+1}} g(e^{\theta t_{n+1}} w_1^{n+1}, e^{\theta t_{n+1}} w_2^{n+1})s_2.
	\end{align*}
\begin{lemma}
		\cblue{The operator $T$ maps $\mathcal{S}$ into itself.} 
	\end{lemma}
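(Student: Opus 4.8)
The plan is to argue by induction on the time index $n$, the base case being guaranteed by the nonnegativity and regularity of the initial data together with the condition $M \ge \sup\{\|w_{1,0}\|_{\infty,\Omega}, \|w_{2,0}\|_{\infty,\Omega}\}$. Fix $n$ and assume $(w_1^n, w_2^n) \in \mathcal{S}$, that is $0 \le w_i^n \le e^{-\theta t_n}M$ a.e. Given $(\hat w_1^{n+1}, \hat w_2^{n+1}) \in \mathcal{S}$, Lemma \ref{uncoupled-P} supplies the displacement $\bu^{s,n+1}$ entering the advection field, and I let $(w_1^{n+1}, w_2^{n+1})$ be the associated solution of the auxiliary system \eqref{eq:newauxiliary1}--\eqref{eq:newauxiliary2}. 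Showing that $T$ is a self-map then amounts to proving the two-sided bound $0 \le w_i^{n+1} \le e^{-\theta t_{n+1}}M$ a.e., which I would obtain by a Stampacchia-type truncation argument, testing the variational equations against suitable positive and negative parts and showing that the relevant truncation vanishes.

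For the lower bound I would test \eqref{eq:newauxiliary1} and \eqref{eq:newauxiliary2} with $s_i = (w_i^{n+1})^- := \min\{w_i^{n+1}, 0\}$. The diffusion term is bounded from below via the coercivity in \eqref{eq:coercivity}, since $\nabla w_i^{n+1}\cdot\nabla (w_i^{n+1})^- = |\nabla (w_i^{n+1})^-|^2$. For the discrete time-derivative term $\tilde a_4$ I would use the induction hypothesis $w_i^n \ge 0$, so that $\int_\Omega w_i^n (w_i^{n+1})^- \le 0$ and hence $\int_\Omega \delta_t w_i^{n+1}(w_i^{n+1})^- \ge \tfrac{1}{\Delta t}\|(w_i^{n+1})^-\|_{0,\Omega}^2$. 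The advective trilinear form is rewritten through identity \eqref{property-term} applied to the truncation, producing $-\tfrac12\int_\Omega \vdiv(\delta_t\bu^{s,n+1})\,((w_i^{n+1})^-)^2$. Crucially, the sign hypothesis in \eqref{eq:reaction-assumption} (namely $f = f_0 \ge 0$ and $g = g_0 \ge 0$ whenever an argument is negative) renders the reaction contribution to the right-hand side nonpositive on $\{w_i^{n+1} < 0\}$, as does the sink term $-\theta\int_\Omega w_i^{n+1}(w_i^{n+1})^-$. Collecting terms yields $(\tfrac{1}{\Delta t} - \tfrac12\|\vdiv(\delta_t\bu^{s,n+1})\|_{\infty,\Omega})\|(w_i^{n+1})^-\|_{0,\Omega}^2 + D_i^{\min}|(w_i^{n+1})^-|_{1,\Omega}^2 \le 0$, so that $(w_i^{n+1})^- = 0$ as soon as $\Delta t$ is small enough for the leading coefficient to be positive.

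For the upper bound I would test with the excess $s_i = (w_i^{n+1} - e^{-\theta t_{n+1}}M)^+$ and add the two equations. The time-derivative and diffusion terms are treated as before (now using $w_i^n \le e^{-\theta t_n}M$ and the monotonicity $e^{-\theta t_{n+1}}M \le e^{-\theta t_n}M$), and the advection is again reduced through \eqref{property-term}. On the support of $s_i$ the growth bound in \eqref{eq:reaction-assumption}, after the exponential rescaling, gives $e^{-\theta t_{n+1}} f(e^{\theta t_{n+1}}w_1^{n+1}, e^{\theta t_{n+1}}w_2^{n+1}) \le C e^{-\theta t_{n+1}} + C(w_1^{n+1} + w_2^{n+1})$, so that the reaction is dominated by the sink $-\theta w_i^{n+1}$ once $\theta$ is chosen large relative to $C$ and to $1/M$. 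With such a $\theta$, and combining with the smallness of $\Delta t$ already invoked, the coefficient multiplying $\|s_1\|_{0,\Omega}^2 + \|s_2\|_{0,\Omega}^2$ becomes nonpositive while the remaining right-hand side contributions are nonpositive as well, forcing $s_1 = s_2 = 0$ and hence $w_i^{n+1} \le e^{-\theta t_{n+1}}M$. This closes the induction and establishes $T(\mathcal{S}) \subseteq \mathcal{S}$.

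I expect the main obstacle to be the advective trilinear form: because the advecting field is the discrete solid velocity $\delta_t\bu^{s,n+1}$, the quantity $\|\vdiv(\delta_t\bu^{s,n+1})\|_{\infty,\Omega}$ carries a factor $1/\Delta t$, so controlling it requires a uniform $\mathbf{W}^{1,\infty}(\Omega)$ bound on the displacement increments (as assumed in Lemma \ref{uncoupled-ADR}) together with a time-step restriction. The second delicate point is the calibration of $\theta$: it must simultaneously absorb the reaction growth and coupling constant $C$ arising from both species and remain compatible with the exponential barrier $e^{-\theta t_n}M$ under the backward-Euler stepping, which is precisely what dictates the form of the admissible set $\mathbf{D}$.
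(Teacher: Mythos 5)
Your proposal is correct and follows essentially the same route as the paper: induction in $n$ combined with truncation tests against the negative part $(w_i^{n+1})^-$ and the excess $(w_i^{n+1}-e^{-\theta t_{n+1}}M)^+$, the integration-by-parts identity \eqref{property-term} for the advective term, the sign condition $f=f_0\ge 0$, $g=g_0\ge 0$ for nonnegativity, and the growth condition for the upper barrier. The only minor difference is that the paper absorbs the $1/\Delta t$-singular advective contribution entirely into the choice of the free parameter $\theta \ge \|\bu^{s,n+1}-\bu^{s,n}\|_{1,\infty,\Omega}/(2\Delta t) + \max\{C_1,C_2\}$ (which is why $\theta$ appears in the definition of $\mathcal{S}$), rather than invoking a separate time-step/increment smallness restriction as you do for the lower bound.
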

	\begin{proof}
		For given $(\hat{w}^{n+1}_1, \hat{w}^{n+1}_2) \in \mathcal{S}$, we need to show that $0 \leq w^{n+1}_1, w^{n+1}_2 \leq e^{ - \theta\, t_{n+1}} M$ for each $n=0, 1, \dots, N$ where  $(w^{n+1}_1, w^{n+1}_2)=T(\hat{w}^{n+1}_1, \hat{w}^{n+1}_2)$. The proof is based on induction and contradiction arguments. Given $w_{1,0} \geq 0$,  assume that $w_{1}^n \geq 0$. We then suppose that  $w_{1}^{n+1} < 0$. Setting $s_1=-(w_1^{n+1})^{-}=- \max \lbrace -w_1^{n+1},0 \rbrace$ in \eqref{eq:auxiliary1} gives us
		\begin{gather*}
		- \int_{\Omega} \left(  \frac{w_1^{n+1}-w_1^n}{\Delta t} \right) (w_1^{n+1})^{-} - \int_{\Omega} D_1(\bx) \nabla w_1^{n+1} \cdot \nabla (w_1^{n+1})^{-} - \int_{\Omega} \left( \frac{\bu^{s,n+1}- \bu^{s,n}}{\Delta t} \cdot \nabla w_1^{n+1} \right) (w_1^{n+1})^{-}  \nonumber
		\\ =\theta \int_{\Omega} w_1^{n+1} (w_1^{n+1})^{-} - \int_{\Omega} e^{-\theta t_{n+1}} f^{n+1} (w_1^{n+1})^{-},
		\\ \frac{1}{\Delta t} \int_{\Omega} ((w_1^{n+1})^{-})^2 + D_1^{\min}\int_{\Omega} (\nabla(w_1^{n+1})^{-})^2 + \int_{\Omega} \left( \frac{\bu^{s,n+1}- \bu^{s,n}}{2 \Delta t} \right)  \cdot \nabla ((w_1^{n+1})^{-})^2 + \frac{1}{\Delta t} \int_{\Omega} w_1^n (w_1^{n+1})^{-} \\
		=- \theta \int_{\Omega} ((w_1^{n+1})^{-})^2 - \int_{\Omega} e^{-\theta t_{n+1}} f^{n+1}(w_1^{n+1})^{-},
		\end{gather*}
		and therefore 	
		\begin{gather}
		\frac{1}{\Delta t} \int_{\Omega} ( (w_1^{n+1})^{-} )^2 +  D_1^{\min}\int_{\Omega} ( \nabla (w_1^{n+1})^{-})^2 -\int_{\Omega} \left( \frac{\mathrm{div}\, (\bu^{s,n+1}- \bu^{s,n})}{2 \Delta t} \right) ((w_1^{n+1})^{-})^2
		+\theta \int_{\Omega} ((w_1^{n+1})^{-})^2\nonumber\\ = - \frac{1}{\Delta t} \int_{\Omega} w_1^n (w_1^{n+1})^{-}  - \int_{\Omega} e^{-\theta t_{n+1}} (w_1^{n+1})^{-} f_0. \label{eq:aux02}
		\end{gather}
		Since $w_1^n$ and $f_0$ are non-negative, the  \cblue{right-hand side  of \eqref{eq:aux02}} is non-positive. For  $\theta \ge \frac{\|\bu^{s,n+1} - \bu^{s,n}\|_{1,\infty,\Omega}}{2 \Delta t}$ ( which is legitimate as can be seen at the end of the proof)  along with positive definiteness of $D_1(\bx)$ throughout $\Omega$ implies that $\int_{\Omega} ((w_1^{n+1})^{-})^2 \le 0$; and hence   $(w_1^{n+1})^{-} = 0$. However  $(w_1^{n+1})^{-} > 0$, which contradicts our initial assumption.  Proceeding then by induction we obtain that $w_1^{n+1} \geq 0$  for each $n$.
		The property for $w_2$ can be derived in analogous way.
		
		The other part of the inequality (that is, $w_1^n,w_2^n \leq e^{\cblue{-\theta}t_n}M$ for each $n$) follows the same lines. Given $w_{1,0} \leq M$ we assume that $w_1^n \leq e^{\cblue{-\theta} t_n} M \le e^{\cblue{-\theta} t_{n+1}}M$, and we further suppose that   $ w_1^{n+1} > e^{\cblue{-\theta}t_{n+1}}M$. Choosing   $s_1=s_1^{n+1} := (w_1^{n+1}- e^{\cblue{-\theta} t_{n+1}} M)^{+}$ in \eqref{eq:newauxiliary1}, we can readily obtain
		\begin{gather*}
		\frac{1}{\Delta t} \int_{\Omega} (w_1^{n+1}-w_1^n) s_1^{n+1} +  \int_{\Omega} D_1(\bx) \nabla w_1^{n+1} \cdot \nabla s_1^{n+1} +  \int_{\Omega} \frac{(\bu^{s,n+1} - \bu^{s,n})}{\Delta t} \cdot \nabla w_1^{n+1} s_1^{n+1} \\
		= - \theta \int_{\Omega} w_1^{n+1} s_1^{n+1}
		+ \int_{\Omega} e^{- \theta t_{n+1}} f^{n+1} s_1^{n+1},
		\end{gather*}
		which implies that
		\begin{gather*}
		\frac{1}{\Delta t} \int_{\Omega} ( s_1^{n+1})^2 +  D_1^{\min}\int_{\Omega} | \nabla s_1^{n+1}|^2 - \int_{\Omega}\frac{\mathrm{div}\,(\bu^{s,n+1} - \bu^{s,n})}{2 \Delta t} (s_1^{n+1})^2 -  \int_{\Omega} \frac{(w_1^n- e^{\cblue{-\theta} t_{n+1}} M)}{\Delta t} s_1^{n+1}
		\\ 	\leq - \theta \int_{\Omega} (s_1^{n+1})^2 -\theta  \int_{\Omega} e^{-\theta t_{n+1}}f^{n+1} s_1^{n+1}.
		\end{gather*}
		Using again that $D_1^{\min} > 0$ and the growth condition of $f$ and  $w_1^n \le e^{\cblue{-\theta} t_{n+1}} M$,
		we can assert that
		\begin{align*}
		& \frac{1}{\Delta t} \int_{\Omega} ( s_1^{n+1})^2   +
		\int_{\Omega} \left(\theta -\frac{\|\bu^{s,n+1} - \bu^{s,n}\|_{1,\infty,\Omega}}{2 \Delta t} \right) (s_1^{n+1})^2 +\theta  \int_{\Omega} e^{\cblue{-\theta} t_{n+1}} M s_1^{n+1} \\
		&\qquad \le -\theta  \int_{\Omega} e^{-\theta t_{n+1}}f^{n+1} s_1^{n+1}
		 \le C e^{-\theta t_{n+1}} \int_{\Omega} (1+ |w_1^{n+1}| + |w_2^{n+1}|) s_1^{n+1}\\
		&\qquad \le C e^{-\theta t_{n+1}} \int_{\Omega} ( |s_1^{n+1}| + |s_2^{n+1}| + (1+2 e^{\cblue{-\theta} t_{n+1}}M)) s_1^{n+1}\\
		&\qquad\le C_1 \int_{\Omega} (e^{\cblue{-\theta} t_{n+1}}M s_1^{n+1} + (s_1^{n+1})^2 + (s_2^{n+1})^2),
		\end{align*}
		and hence,  after denoting $A(\bu,\Delta t)= \frac{\|\bu^{s,n+1} - \bu^{s,n}\|_{1,\infty,\Omega}}{2 \Delta t}$, we can write the bounds
		\begin{align}\label{eq:s1}
		\frac{1}{\Delta t} \| s_1^{n+1} \|_{0,\Omega}^2 \!+\! ( \theta -\! A(\bu,\Delta t) -\! C_1) \| s_1^{n+1} \|_{0,\Omega}^2 + \!(\theta \!-\! C_1)\!\! \int_{\Omega}e^{\cblue{-\theta} t_{n+1}}M s_1^{n+1}
		\!  - \!C_1 \| s_2^{n+1} \|_{0,\Omega}^2 &\le 0,\\
		\label{eq:s2}
		\frac{1}{\Delta t} \| s_2^{n+1} \|_{0,\Omega}^2 \!+ \!( \theta \!-\! A(\bu,\Delta t) - C_2) \| s_2^{n+1} \|_{0,\Omega}^2\! +\! (\theta - C_2)\!\!\int_{\Omega} e^{\cblue{-\theta} t_{n+1}}M s_2^{n+1}
		\!- \!C_2 \| s_1^{n+1} \|_{0,\Omega}^2 & \le 0.
		\end{align}
		
		We then employ \eqref{eq:s1} and \eqref{eq:s2}, which leads to
		\begin{gather*}
		\frac{1}{\Delta t} (\| s_1^{n+1} \|_{0,\Omega}^2 + \| s_2^{n+1} \|_{0,\Omega}^2) + \left(\theta - A(\bu,\Delta t) -\max \lbrace C_1, C_2 \rbrace \right) (\|  s_1^{n+1} \|_{0,\Omega}^2 + s_2^{n+1} \|_{0,\Omega}^2)
		\\  + \left(\theta -C_1 \right) \int_{\Omega} e^{\cblue{-\theta} t_{n+1}}M s_1^{n+1} + \left(\theta -C_2 \right) \int_{\Omega} e^{\cblue{-\theta} t_{n+1}}M s_2^{n+1} \le 0,
		\end{gather*}
		and if we choose $\theta \ge A(\bu,\Delta t) + \max \lbrace C_1, C_2 \rbrace$, then we conclude, from the expression above, that  $s_1^{n+1}=s_2^{n+1}=0$. This leads to a contradiction with  $s_1^{n+1}, s_2^{n+1} > 0$, and hence   $ w_1^{n+1}, w_2^{n+1} \le e^{\cblue{-\theta} t_{n+1}}M$. An appeal to the induction principle completes the rest of the proof.
	\end{proof}
	
	\cblue{\begin{lemma}\label{cor1}
		$T(\mathcal{S})$ is relatively compact  in $[L^2(\Omega)]^2$.
	\end{lemma}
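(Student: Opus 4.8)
The plan is to show that the image $T(\mathcal{S})$ is uniformly bounded in $[H^1(\Omega)]^2$ and then to invoke the compact embedding $H^1(\Omega)\hookrightarrow L^2(\Omega)$ provided by the Rellich--Kondrachov theorem. Since every element of $T(\mathcal{S})$ is, by construction, a solution $(w_1^{n+1},w_2^{n+1})\in[H^1(\Omega)]^2$ of the uncoupled ADR system \eqref{eq:newauxiliary1}--\eqref{eq:newauxiliary2}, relative compactness in $[L^2(\Omega)]^2$ will follow at once from a bound in $[H^1(\Omega)]^2$ that is uniform over all inputs in $\mathcal{S}$.

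First I would recall the a priori stability estimate \eqref{bound-w1h-w2h-stability} established in Lemma \ref{uncoupled-ADR}, which supplies a bound on $\Vert w_i^{n+1}\Vert_{0,\Omega}$ together with a bound on $\Vert\nabla w_i\Vert_{\ell^2(L^2(\Omega))}$ for $i=1,2$. Since $\Delta t$ is held fixed and, by definition of the discrete-in-time norm in \eqref{discrete-identity}, $\Vert\nabla w_i\Vert^2_{\ell^2(L^2(\Omega))}=\Delta t\sum_{m=0}^{n}\Vert\nabla w_i^{m+1}\Vert^2_{0,\Omega}$ contains the term $\Delta t\,\Vert\nabla w_i^{n+1}\Vert^2_{0,\Omega}$, I can extract a bound on $\Vert\nabla w_i^{n+1}\Vert_{0,\Omega}$ upon dividing by $\sqrt{\Delta t}$. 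Combined with the $L^2$ bound, this controls the full norm $\Vert w_i^{n+1}\Vert_{1,\Omega}$.

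The crucial point, and the main obstacle, is to verify that this $H^1$ bound is uniform over the whole set $\mathcal{S}$ and not merely valid for a single input. The right-hand side of \eqref{bound-w1h-w2h-stability} involves only fixed quantities (namely $n\Delta t$ and the initial data $w_{1,0},w_{2,0}$), while the dependence on the input pair $(\hat{w}_1^{n+1},\hat{w}_2^{n+1})$ enters solely through the displacement $\bu^{s,n+1}$ appearing in the advective trilinear form $c$. Because $\mathcal{S}$ is bounded, its elements satisfying $0\le\hat{w}_i\le e^{-\theta t_n}M$, property \eqref{property-of-r} yields a uniform $L^2$ bound on $\hat{r}^{n+1}=r(\hat{w}_1^{n+1},\hat{w}_2^{n+1})$; the poroelastic stability estimate \eqref{eq:CD1} then gives a bound on $\bu^{s,n+1}$ that is uniform over $\mathcal{S}$. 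Invoking the standing assumption (already used in Lemma \ref{uncoupled-ADR}) that $\bu^{s,n+1},\bu^{s,n}$ are uniformly bounded in $\mathbf{W}^{1,\infty}(\Omega)$, the constant entering \eqref{bound-w1h-w2h-stability} is itself independent of the chosen input, so the resulting $H^1$ bound holds uniformly across $\mathcal{S}$.

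Finally, having $T(\mathcal{S})$ bounded in $[H^1(\Omega)]^2$, the compactness of the embedding $H^1(\Omega)\hookrightarrow L^2(\Omega)$ on the bounded domain $\Omega$ (Rellich--Kondrachov) immediately implies that $T(\mathcal{S})$ is relatively compact in $[L^2(\Omega)]^2$, which is the desired conclusion.
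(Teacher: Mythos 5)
Your proof is correct and takes essentially the same route as the paper's: establish a uniform bound for $T(\mathcal{S})$ in $[H^1(\Omega)]^2$ (the paper does this by testing \eqref{eq:auxiliary1} directly with $s_1=w_1^{n+1}$, whereas you recycle the energy estimate \eqref{bound-w1h-w2h-stability} of Lemma \ref{uncoupled-ADR} and extract the gradient bound at the final level by dividing by $\sqrt{\Delta t}$, which is legitimate since $\Delta t$ is fixed in the fixed-point argument), and then conclude via the Rellich--Kondrachov compact embedding. Your explicit tracking of why the constants are uniform over $\mathcal{S}$ --- through \eqref{property-of-r}, the poroelastic bound \eqref{eq:CD1} on $\bu^{s,n+1}$, and the standing $\mathbf{W}^{1,\infty}$ assumption --- is in fact more careful than the paper's one-line appeal to the ``boundedness of the terms appearing in the right-hand side.''
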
}
	\begin{proof}
First we show that $T(S)$ is bounded in $[H^1(\Omega)]^2$, i.e., we need to show that $(w_1^{n+1}, w_2^{n+1}):=T(\hat{w}_1^{n+1}, \hat{w}_2^{n+1}) \in [H^1(\Omega)]^2$ for any $ (\hat{w}_1^{n+1}, \hat{w}_2^{n+1}) \in \mathcal{S}$.
		By taking  $s_1=w_1^{n+1}$ in \eqref{eq:auxiliary1} and employing  \eqref{property-term} with the definition of $\mathcal{S}$, we immediately see that
		\begin{align}\label{eq:boundforw}
		\frac{1}{\Delta t} \|w_1^{n+1}\|_{0,\Omega}^2 +  D_1^{\min}\int_{\Omega}| \nabla w_1^{n+1}|^2 = & \int_{\Omega}\frac{\mathrm{div}\,(\bu^{s,n+1} - \bu^{s,n})}{2 \Delta t} (w_1^{n+1})^2 +  \int_{\Omega} \frac{w_1^n w_1^{n+1}}{\Delta t}
		\nonumber \\  & - \theta \|w_1^{n+1}\|_{0,\Omega}^2  + \int_{\Omega} e^{- \theta t_{n+1}} f^{n+1} w_1^{n+1}.
		\end{align}
		Using the boundedness of the terms appearing in the right-hand side of  \eqref{eq:boundforw}, we have
		$$ \|w_1^{n+1}\|_{1,\Omega} \le \text{Constant},$$
		and	thus $w_1^{n+1} \in H^1(\Omega)$. Showing that  $w_2^{n+1} \in H^1(\Omega)$ is analogous. \cblue{ Now compact embedding of vector space $[H^1(\Omega)]^2$ into $[L^2(\Omega)]^2$ together with boundedness of $T(\mathcal{S})$  conclude that  $T(\mathcal{S})$ is relatively compact in $[L^2(\Omega)]^2$.}
	\end{proof}
	\begin{lemma} \label{continuous}
		The map $T$ is continuous.
	\end{lemma}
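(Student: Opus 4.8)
The plan is to establish sequential continuity of $T$ on $[L^2(\Omega)]^2$ by exploiting the factorisation of $T$ through the affine-linear poroelastic solve \eqref{eq:uncoupled2} followed by the semilinear ADR solve \eqref{eq:newauxiliary1}-\eqref{eq:newauxiliary2}. I would take an arbitrary sequence $(\hat w_{1,k}^{n+1},\hat w_{2,k}^{n+1})\to(\hat w_1^{n+1},\hat w_2^{n+1})$ in $\mathcal{S}$ (in the $[L^2(\Omega)]^2$-topology) and denote by $(w_{1,k}^{n+1},w_{2,k}^{n+1}):=T(\hat w_{1,k}^{n+1},\hat w_{2,k}^{n+1})$ and $(w_1^{n+1},w_2^{n+1}):=T(\hat w_1^{n+1},\hat w_2^{n+1})$ the corresponding images, together with the associated displacements $\bu^{s,n+1}_k$ and $\bu^{s,n+1}$ produced by \eqref{eq:uncoupled2}. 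The goal is to show $w_{i,k}^{n+1}\to w_i^{n+1}$ in $L^2(\Omega)$.

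First I would control the intermediate poroelastic step. Since $r$ is Lipschitz by \eqref{property-of-r}, we have $\hat r_k^{n+1}:=r(\hat w_{1,k}^{n+1},\hat w_{2,k}^{n+1})\to \hat r^{n+1}$ in $L^2(\Omega)$. Because \eqref{eq:uncoupled2} is linear in its unknowns and depends affine-linearly on $\hat r^{n+1}$ (with all remaining data and all previous-time quantities frozen within the fixed-point iteration at level $n+1$, so that only the $m=n$ contribution survives), the difference $\bu^{s,n+1}_k-\bu^{s,n+1}$ solves \eqref{eq:uncoupled2} driven only by $\hat r_k^{n+1}-\hat r^{n+1}$, and the continuous-dependence estimate \eqref{eq:CD1} yields
\begin{equation*}
\|\bu^{s,n+1}_k-\bu^{s,n+1}\|_{1,\Omega}\le C\,\|\hat r_k^{n+1}-\hat r^{n+1}\|_{0,\Omega}\le C\big(\|\hat w_{1,k}^{n+1}-\hat w_1^{n+1}\|_{0,\Omega}+\|\hat w_{2,k}^{n+1}-\hat w_2^{n+1}\|_{0,\Omega}\big)\longrightarrow 0,
\end{equation*}
so that in particular $\delta_t\bu^{s,n+1}_k\to\delta_t\bu^{s,n+1}$ in $\bH^1(\Omega)$.

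Next I would transfer this convergence to the concentrations. Writing $e_i:=w_{i,k}^{n+1}-w_i^{n+1}$ and subtracting the two copies of \eqref{eq:newauxiliary1}-\eqref{eq:newauxiliary2}, I would test with $s_i=e_i$. Because both images share the same frozen previous iterate $w_i^n$, one has $\tilde{a}_4(e_1,e_1)=\tfrac{1}{\Delta t}\|e_1\|_{0,\Omega}^2$ (and likewise for $e_2$), while coercivity of $a_4,a_5$ provides $D_i^{\min}|e_i|_{1,\Omega}^2$. The advection terms I would split as $c(w_{1,k}^{n+1},e_1,\bu^{s,n+1}_k)-c(w_1^{n+1},e_1,\bu^{s,n+1})=c(e_1,e_1,\bu^{s,n+1}_k)+c(w_1^{n+1},e_1,\bu^{s,n+1}_k-\bu^{s,n+1})$: the first piece is handled by the skew-symmetry identity \eqref{property-term} together with the uniform $\mathbf{W}^{1,\infty}$-bound on the displacements, whereas the second is estimated via the embedding $H^1(\Omega)\hookrightarrow L^4(\Omega)$ (valid for $d\le 4$) by $\le C\|\bu^{s,n+1}_k-\bu^{s,n+1}\|_{1,\Omega}\,\|w_1^{n+1}\|_{1,\Omega}\,\|e_1\|_{1,\Omega}$. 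Finally, the reaction contributions are Lipschitz in $(w_1,w_2)$ by \eqref{eq:reaction-assumption}, contributing only terms of the form $C(\|e_1\|_{0,\Omega}^2+\|e_2\|_{0,\Omega}^2)$ after multiplying by the factor $e^{-\theta t_{n+1}}$.

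Collecting the estimates for $e_1$ and $e_2$, absorbing the $\|e_i\|_{1,\Omega}$ factor through Young's inequality against the diffusive coercivity, and choosing $\theta$ large enough (exactly as in the self-map lemma) to dominate both the divergence term from \eqref{property-term} and the reaction Lipschitz constants, I would arrive at an inequality of the form
\begin{equation*}
C_\star\big(\|e_1\|_{0,\Omega}^2+\|e_2\|_{0,\Omega}^2\big)\le C\,\|\bu^{s,n+1}_k-\bu^{s,n+1}\|_{1,\Omega}^2\big(\|w_1^{n+1}\|_{1,\Omega}^2+\|w_2^{n+1}\|_{1,\Omega}^2\big),
\end{equation*}
with $C_\star>0$. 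Since the images are bounded in $[H^1(\Omega)]^2$ by Lemma \ref{cor1} and the right-hand side tends to zero by the first step, this forces $e_i\to 0$ in $L^2(\Omega)$, which proves the continuity of $T$. I expect the main obstacle to be the mixed advection term $c(w_1^{n+1},e_1,\bu^{s,n+1}_k-\bu^{s,n+1})$, since the velocity difference converges only in $\bH^1$ and not in $\mathbf{W}^{1,\infty}$; this is precisely where the Sobolev embedding and the a priori $H^1$-boundedness of the concentrations must be combined, and where the choice of $\theta$ is needed to close the estimate.
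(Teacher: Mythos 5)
Your proposal is correct, but it takes a genuinely different route from the paper. The paper's proof is a soft compactness argument: it extracts a subsequence of the inputs converging a.e., uses only continuity and boundedness of $r$ to pass to the limit in the load functional, invokes the $H^1$-boundedness of the outputs (Lemma \ref{cor1}) to obtain a weakly-$H^1$/strongly-$L^2$ convergent subsequence, passes to the limit in \eqref{eq:auxiliary1}--\eqref{eq:auxiliary2}, and concludes by uniqueness of subsequential limits. You instead prove a quantitative estimate: linearity of the per-time-level poroelastic solve together with the Lipschitz property \eqref{property-of-r} gives $\|\bu^{s,n+1}_k-\bu^{s,n+1}\|_{1,\Omega}\lesssim \|\hat w_{1,k}^{n+1}-\hat w_1^{n+1}\|_{0,\Omega}+\|\hat w_{2,k}^{n+1}-\hat w_2^{n+1}\|_{0,\Omega}$ (your observation that only the level-$(n+1)$ data differ, so the sums in \eqref{eq:CD1} collapse to a single term, is exactly right), and an energy estimate on the difference of the two ADR solves --- skew-symmetrising the quadratic advection piece via \eqref{property-term}, handling the cross piece by $H^1\hookrightarrow L^4$, and using the Lipschitz bounds \eqref{eq:reaction-assumption} --- transfers this to the outputs. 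What your approach buys is an explicit modulus of continuity, essentially Lipschitz continuity of $T$, which the paper's subsequence argument does not provide and which would, with some bookkeeping of constants, even open the door to a contraction/Banach argument. The price is that you genuinely need the Lipschitz (not merely continuity) hypotheses on $r,f,g$, and a bound on $\|\delta_t\bu^{s,n+1}_k\|_{1,\infty,\Omega}$ that is \emph{uniform in $k$} to control $c(e_1,e_1,\bu^{s,n+1}_k)$; this is the same $\mathbf{W}^{1,\infty}$ regularity the paper itself assumes in Lemma \ref{uncoupled-ADR}, but you should state explicitly that it holds uniformly along the sequence (plausible here since the inputs range over the uniformly $L^\infty$-bounded set $\mathcal{S}$). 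With that caveat made explicit, your argument is a legitimate and arguably more informative replacement for the paper's proof.
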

	\begin{proof}
		Let $(\hat{w}_{1,k}^{n+1}, \hat{w}_{2,k}^{n+1})_k \in \mathcal{S}$ be a sequence such that $(\hat{w}_{1,k}^{n+1}, \hat{w}_{2,k}^{n+1})_k \rightarrow (\hat{w}_1^{n+1}, \hat{w}_2^{n+1})$ in $[L^2(\Omega)]^2$ as $k\to\infty$.
		From the definition of $T$ we have that $(w_{1,k}^{n+1}, w_{2,k}^{n+1})=T(\hat{w}_{1,k}^{n+1}, \hat{w}_{2,k}^{n+1})$.

We then proceed to extract from $(\hat{w}_{1,k}^{n+1}, \hat{w}_{2,k}^{n+1})_k$ a subsequence $(\hat{w}_{1,k_j}^{n+1}, \hat{w}_{2,k_j}^{n+1})_{j}$ which   converges  to $(\hat{w}_1^{n+1}, \hat{w}_2^{n+1})$ a.e. in $\Omega$. Consequently, and owing to the continuity and boundedness of the  function, we have that  $ r(\hat{w}_{1,k_j}^{n+1}, \hat{w}_{1,k_j}^{n+1})$ converges to $r(\hat{w}_1^{n+1},\hat{w}_2^{n+1})$ in $[L^2(\Omega)]^2$.
		Moreover,	since the subsequence $(w_{1,k_j}^{n+1}, w_{2,k_j}^{n+1})_j $ is bounded in $[H^1(\Omega)]^2$, there exists a subsequence $( w_{1,(k_j)_q}^{n+1}, w_{2,(k_j)_q}^{n+1})_{q}$ such that
		$$( w_{1,(k_j)_q}^{n+1}, w_{2,(k_j)_q}^{n+1})_{q} \xrightarrow {q \rightarrow \infty}  (w_1^{n+1}, w_2^{n+1}),$$ weakly in $[H^1(\Omega)]^2$, strongly in $[L^2(\Omega)]^2$, and  a.e. in $\Omega$. And after taking the limit $q \to \infty$ in \eqref{eq:auxiliary1}- \eqref{eq:auxiliary2} \cblue{with variables $( \hat{w}_{1,(k_j)_q}^{n+1}, \hat{w}_{2,(k_j)_q}^{n+1})$}, we can assert that the converging subsequence of  $(w_{1,k}^{n+1},w_{2,k}^{n+1})_k$ in $[L^2(\Omega)]^2$ has as a limit $(w_1^{n+1},w_2^{n+1})=T(\hat{w}_1^{n+1},\hat{w}_2^{n+1})$.
		Proceeding in a similar fashion, we can safely say that all convergent subsequences of  $(w_{1,k}^{n+1},w_{2,k}^{n+1})_k$ have a unique limit \cblue{$T(\hat{w}_1^{n+1},\hat{w}_2^{n+1}) = (w_1^{n+1},w_2^{n+1})$. Using Lemma \ref{cor1} and the fact that every subsequence of $(w_{1,k}^{n+1},w_{2,k}^{n+1})_k$  has a unique limit}, we can conclude that $(w_{1,k}^{n+1}, w_{2,k}^{n+1})_k$ converges to $T(\hat{w}_1^{n+1}, \hat{w}_2^{n+1})$ in $[L^2(\Omega)]^2$.
	\end{proof}
	
	
	In view of the above results, \cblue{an application of the generalised Schauder's theorem (Lemma \ref{lem:sch}) enable us  to state the following existence theorem.}
	\begin{theorem}
		The  semi-discrete formulation  \eqref{weak-u}- \eqref{weak-w2} for problem \eqref{eq:coupled} possesses at least one solution.
	\end{theorem}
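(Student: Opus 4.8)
The plan is to recast the claim as a fixed-point problem for the operator $T:\mathcal{S}\to\mathcal{S}$ and to invoke the generalised Schauder theorem (Lemma~\ref{lem:sch}), since the three hypotheses of that theorem have essentially been prepared by the preceding lemmas. First I would recall that $\mathcal{S}=\mathbf{D}\times\mathbf{D}$ is a closed, bounded and convex subset of the Banach space $[L^2(\Omega)]^2$, so it plays the role of the set $M$ in Lemma~\ref{lem:sch}. That $T$ is well defined follows by composing Lemmas~\ref{uncoupled-P} and~\ref{uncoupled-ADR}: for each $(\hat{w}_1^{n+1},\hat{w}_2^{n+1})\in\mathcal{S}$ the uncoupled poroelastic problem \eqref{eq:uncoupled2} returns a unique $\bu^{s,n+1}$, which then enters the advective trilinear form of \eqref{eq:newauxiliary1}-\eqref{eq:newauxiliary2} and produces a unique $(w_1^{n+1},w_2^{n+1})$.

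Next I would simply assemble the three Schauder hypotheses from the facts already established: the self-map property $T(\mathcal{S})\subseteq\mathcal{S}$ is exactly the maximum-principle-type bound $0\le w_i^{n+1}\le e^{-\theta t_{n+1}}M$ proved above; the relative compactness of $T(\mathcal{S})$ in $[L^2(\Omega)]^2$ is Lemma~\ref{cor1} (uniform boundedness in $[H^1(\Omega)]^2$ together with the Rellich-Kondrachov compact embedding); and the continuity of $T$ is Lemma~\ref{continuous}. An application of Lemma~\ref{lem:sch} then yields a fixed point $(w_1^{n+1},w_2^{n+1})=T(w_1^{n+1},w_2^{n+1})\in\mathcal{S}$.

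It then remains to check that the fixed point genuinely solves \eqref{weak-u}-\eqref{weak-w2}. At a fixed point one has $\hat{r}^{n+1}=r(w_1^{n+1},w_2^{n+1})$, so the displacement $\bu^{s,n+1}$ used to drive the advection is precisely the one generated by the computed concentrations, which reconciles \eqref{eq:uncoupled2} with \eqref{weak-u}-\eqref{weak-psi}. Reversing the exponential change of variables $w_i^{n+1}\mapsto e^{\theta t_{n+1}}w_i^{n+1}$ turns the auxiliary functionals $\tilde{J}_{f^{n+1}},\tilde{J}_{g^{n+1}}$ back into $J_{f^{n+1}},J_{g^{n+1}}$, so that \eqref{eq:newauxiliary1}-\eqref{eq:newauxiliary2} collapse onto \eqref{weak-w1}-\eqref{weak-w2}. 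Since the construction uses only the initial data and the previously computed iterate at time level $n$, an induction over $n=0,1,\dots,N$ then produces a solution at every time step and concludes the argument.

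I expect the delicate point to be not the Schauder machinery, which is now routine, but the self-consistency of the exponential rescaling: one must fix $\theta\ge A(\bu,\Delta t)+\max\{C_1,C_2\}$ with $A(\bu,\Delta t)=\|\bu^{s,n+1}-\bu^{s,n}\|_{1,\infty,\Omega}/(2\Delta t)$, and this looks circular because $\theta$ enters the very definition of $\mathcal{S}$ while $\bu^{s,n+1}$ depends on the concentrations through $\hat{r}^{n+1}$. The way I would break the circularity is through the $\lambda$- and $\Delta t$-uniform stability estimate \eqref{eq:CD1}, which bounds $\bu^{s,n+1}$ in terms of the data and of $\|\hat{r}^{n+1}\|_{0,\Omega}$ only; by \eqref{property-of-r} the latter is controlled uniformly over $\mathcal{S}$. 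Granting the assumed $\mathbf{W}^{1,\infty}(\Omega)$ regularity of the displacement increments that underlies \eqref{eq:s1}-\eqref{eq:s2}, the quantity $A(\bu,\Delta t)$ is then uniformly bounded and $\theta$ can be fixed a priori, making the whole fixed-point construction consistent.
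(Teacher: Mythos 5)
Your proposal is correct and follows essentially the same route as the paper: the paper's proof is precisely the one-line application of the generalised Schauder theorem (Lemma~\ref{lem:sch}) to the operator $T$ on $\mathcal{S}$, with the self-map, relative compactness, and continuity hypotheses supplied by the preceding lemmas. Your additional remarks on undoing the exponential rescaling and on fixing $\theta$ a priori via the uniform stability bound \eqref{eq:CD1} are consistent elaborations of the same argument rather than a different approach.
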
	
	\subsection{Uniqueness of weak solutions}
	\cblue{In order to obtain the uniqueness of the weak solution of \eqref{weak-u}-\eqref{weak-w2}, we establish the following two preliminary results. }
	\cblue{\begin{lemma}\label{uniqueness-poro}
	Let $
	\mathcal{U}^{n+1}, \mathcal{P}^{n+1}, \bigchi^{n+1},
	\mathcal{W}_1^{n+1}$, and $\mathcal{W}_2^{n+1}$ differences between two solutions associated with the semi-discrete weak formulation \eqref{weak-u}-\eqref{weak-w2}. Then
	\begin{align} \label{bound-up}
	\| \mathcal{U}^{n+1}\|_{1,\Omega}^2 + c_0\| \mathcal{P}^{n+1}\|_{0,\Omega}^2 + \| \bigchi^{n+1}\|_{0,\Omega}^2 + \| \mathcal{P}\|^2_{l^2(H^1(\Omega))} \le  C \bigg( \|\mathcal{U}^0 \|_{1,\Omega}^2 + \|\mathcal{P}^0 \|_{0,\Omega}^2 + \|\bigchi^0 \|_{0,\Omega}^2 \nonumber\\
	+\sum_{m=0}^n \| \bb_1^{m+1} - \bb_2^{m+1} \|_{0,\Omega}^2 + \| \ell_1 - \ell_2 \|_{l^2(L^2( \Omega))}^2 + \sum_{m=0}^{n} \big( \| \mathcal{P}^{m+1}\|_{0,\Omega}^2 + \| \bigchi^{m+1}\|_{0,\Omega}^2  \\
	+ \| \mathcal{W}_1^{ m+1} \|_{0,\Omega}^2 + \|\mathcal{W}_2^{m+1} \|_{0,\Omega}^2 \big) \bigg).\nonumber
	\end{align}
	\end{lemma}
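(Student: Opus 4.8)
The plan is to exploit that the three poroelasticity equations \eqref{weak-u}--\eqref{weak-psi} are \emph{linear} in the triple $(\bu^{s,n+1},p^{f,n+1},\psi^{n+1})$, so that the differences of two solutions solve the very same system with only the data modified; the argument then reduces to re-running the stability estimate already obtained for the uncoupled problem \eqref{eq:uncoupled2}. Concretely, let $(\bu^{s,n+1}_j,p^{f,n+1}_j,\psi^{n+1}_j,w_{1,j}^{n+1},w_{2,j}^{n+1})$, $j=1,2$, be two solutions of \eqref{weak-u}--\eqref{weak-w2} associated with data $(\bb_j,\ell_j)$, and set $\mathcal{U}^{n+1}=\bu^{s,n+1}_1-\bu^{s,n+1}_2$, $\mathcal{P}^{n+1}=p^{f,n+1}_1-p^{f,n+1}_2$, $\bigchi^{n+1}=\psi^{n+1}_1-\psi^{n+1}_2$, and likewise $\mathcal{W}_i^{n+1}=w_{i,1}^{n+1}-w_{i,2}^{n+1}$. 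Subtracting the first three rows of \eqref{weak-u}--\eqref{weak-w2} for $j=1$ and $j=2$, and using that all the bilinear forms (including $\tilde{a}_2,\tilde{b}_2$, which commute with taking differences since $\delta_t$ is linear) are fixed, one finds that $(\mathcal{U}^{n+1},\mathcal{P}^{n+1},\bigchi^{n+1})$ solves exactly the three-field poroelastic system \eqref{eq:uncoupled2}, now with body load $\bb_1^{n+1}-\bb_2^{n+1}$, mass source $\ell_1^{n+1}-\ell_2^{n+1}$, and active-stress datum $\mathcal{R}^{n+1}:=r(w_{1,1}^{n+1},w_{2,1}^{n+1})-r(w_{1,2}^{n+1},w_{2,2}^{n+1})$ playing the role of $\hat{r}^{n+1}$.

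Next I would apply verbatim the chain of estimates leading to the pre-Gronwall bound \eqref{stability-result} to this difference system. That is, I take $\bv^s=\delta_t\mathcal{U}^{n+1}$, $q^f=\mathcal{P}^{n+1}$ and $\phi=\delta_t\bigchi^{n+1}$, invoke the continuity and coercivity bounds \eqref{eq:continuity} and \eqref{eq:coercivity}, the discrete identity \eqref{discrete-identity}, the inf-sup control of $\bigchi^{n+1}$ through \eqref{inf-sup} exactly as in \eqref{inf-sup-psi}, and the summation-by-parts manipulation of the coupling term $\int_\Omega \delta_t\bigchi^{m+1}\,\vdiv\mathcal{U}^{m+1}$. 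Since the forms and constants are identical, this produces an estimate of the same shape as \eqref{stability-result}, whose left-hand side is $\|\mathcal{U}^{n+1}\|_{1,\Omega}^2+c_0\|\mathcal{P}^{n+1}\|_{0,\Omega}^2+\|\bigchi^{n+1}\|_{0,\Omega}^2+\|\mathcal{P}\|^2_{l^2(H^1(\Omega))}$ (recalling $\|\mathcal{P}\|^2_{l^2(H^1(\Omega))}=\Delta t\sum_{m=0}^n\|\mathcal{P}^{m+1}\|^2_{1,\Omega}$), and whose right-hand side collects the initial data $\|\mathcal{U}^0\|_{1,\Omega}^2+\|\mathcal{P}^0\|_{0,\Omega}^2+\|\bigchi^0\|_{0,\Omega}^2$, the data differences $\sum_m\|\bb_1^{m+1}-\bb_2^{m+1}\|_{0,\Omega}^2$ and $\|\ell_1-\ell_2\|^2_{l^2(L^2(\Omega))}$, the Gronwall-type terms $\sum_m(\|\mathcal{P}^{m+1}\|_{0,\Omega}^2+\|\bigchi^{m+1}\|_{0,\Omega}^2)$, and the active-stress contribution $\sum_m\|\mathcal{R}^{m+1}\|_{0,\Omega}^2$.

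The only genuinely new ingredient, and the main (albeit mild) obstacle, is the treatment of the active-stress term, which is the sole place where the nonlinear coupling to the concentrations enters the poromechanics. Here I would invoke the Lipschitz property of $r$ from \eqref{property-of-r}, giving pointwise $|\mathcal{R}^{n+1}|\le C(|\mathcal{W}_1^{n+1}|+|\mathcal{W}_2^{n+1}|)$ and hence $\|\mathcal{R}^{m+1}\|_{0,\Omega}^2\le C(\|\mathcal{W}_1^{m+1}\|_{0,\Omega}^2+\|\mathcal{W}_2^{m+1}\|_{0,\Omega}^2)$ for each $m$. Substituting this for the $\mathcal{R}$-sum on the right-hand side replaces it by $C\sum_m(\|\mathcal{W}_1^{m+1}\|_{0,\Omega}^2+\|\mathcal{W}_2^{m+1}\|_{0,\Omega}^2)$, and absorbing all constants into a single $C$ yields precisely \eqref{bound-up}. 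I emphasise that, as in the source estimates, the constant $C$ can be kept independent of $\Delta t$ and of $\lambda$, which is exactly what makes the bound suitable for the locking-free uniqueness argument to follow, where \eqref{bound-up} will be combined with the analogous difference estimate for the concentrations and closed by a discrete Gronwall inequality.
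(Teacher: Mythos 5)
Your proposal is correct and follows essentially the same route as the paper: the paper likewise forms the difference equations for the linear poroelastic subsystem, tests with $\delta_t\mathcal{U}^{n+1}$, $\mathcal{P}^{n+1}$, $\delta_t\bigchi^{n+1}$ exactly as in the stability estimate for the uncoupled problem \eqref{eq:uncoupled2}, and invokes the Lipschitz bound \eqref{property-of-r} to convert the active-stress difference into the $\mathcal{W}_1,\mathcal{W}_2$ terms. Your write-up is in fact more explicit than the paper's (which compresses the final steps into a one-line reference to the earlier lemmas), but no new idea is introduced or missing.
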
}
\begin{proof}
We follow the strategy adopted in \cite{anaya18} and define two solutions $(\bu_1^{s,n+1},p_1^{f,n+1}, \psi_1^{n+1}, w_1^{1,n+1},w_2^{1,n+1})$ and $(\bu_2^{s,n+1},p_2^{f,n+1}, \psi_2^{n+1}, w_1^{2,n+1},w_2^{2,n+1})$ associated with initial data $\bb^{n+1}_1, \ell^{n+1}_1, \bu^{s,0}_1, p^{f,0}_1, \psi^0_1$, $w^1_{1,0}, w^1_{2,0}$, and $\bb^{n+1}_2, \ell^{n+1}_2,  \bu^{s,0}_2, p^{f,0}_2, \psi^0_2$, $w^2_{1,0}, w^2_{2,0}$,  respectively, and then
\begin{gather*}
\mathcal{U}^{n+1}= \bu_1^{s,n+1}- \bu_2^{s,n+1},\;\; \mathcal{P}^{n+1} = p_1^{f,n+1} - p_2^{f,n+1}, \;\; \bigchi^{n+1} = \psi_1^{n+1} - \psi_2^{n+1},\\
\mathcal{W}_1^{n+1} = w_1^{1,n+1} - w_1^{2,n+1}, \;\; \mathcal{W}_2^{n+1} = w_2^{1,n+1} - w_2^{2,n+1}.\end{gather*}
In this way, it follows from \eqref{weak-u}-\eqref{weak-psi} that
\begin{align*}
& 2 \mu \int_{\Omega} \beps(\mathcal{U}^{n+1}) : \beps (\bv^s)
- \int_{\Omega} \bigchi^{n+1} \bdiv \bv^s  - \rho \int_{\Omega} ( (\bb_1^{n+1} - \bb_2^{n+1})\cdot \bv^s \\
&\qquad   - \tau \int_{\Omega} (r_1^{n+1}-r_2^{n+1}) (k \otimes k): \beps(\bv^s)  = 0, \\
& \frac{1}{2} \left(c_0 + \frac{\alpha^2}{\lambda} \right) \int_{\Omega} \delta_{t} \cP^{n+1} q^f +  \;   \int_{\Omega} \frac{\kappa}{\eta} \nabla \cP^{n+1} \cdot \nabla q^f  - \frac{\alpha}{\lambda} \int_{\Omega} q^f \delta_{t} \bigchi^{n+1} - \int_{\Omega} \left(\ell_1^{n+1} -\ell_2^{n+1} \right) q^f = 0, \\
&-\int_{\Omega} \phi \; \mathrm{div}\, \mathcal{U}^{n+1} +  \; \frac{\alpha}{\lambda} \int_{\Omega} \mathcal{P}^{n+1} \phi  - \frac{1}{\lambda}\int_{\Omega} \bigchi^{n+1} \phi  =0,
\end{align*}
for all $\bv^s \in \bV$, all $q^f \in Q$, and all $\phi \in Z$.
Finally, similarly as in the proof of Lemma~\ref{uncoupled-P} we employ $\delta_{t} \mathcal{U}^{n+1}$, $\mathcal{P}^{n+1}$, $\delta_{t} \bigchi^{n+1}$ as test functions, together with \eqref{property-of-r} to arrive at the desired result \eqref{bound-up}.
\end{proof}

\cblue{\begin{lemma}
	Consider the hypothesis defined previously in the statement of Lemma \ref{uniqueness-poro}. Then \begin{align} \label{bound-w}
	\| \mathcal{W}_1^{n+1} \|_{0, \Omega}^2 + \| \mathcal{W}_2^{n+1} \|_{0, \Omega}^2 + \Delta t^2 \sum_{m=0}^n (\| \delta_{t}\mathcal{W}_1^{m+1} \|_{0, \Omega}^2+ \| \delta_{t}\mathcal{W}_2^{m+1} \|_{0, \Omega}^2)  + D^{\min} \Delta t \sum_{m=0}^n  (|\mathcal{W}_1^{m+1}|^2_{1,\Omega} \nonumber  \\
	+ |\mathcal{W}_2^{m+1}|^2_{1,\Omega} )\; \leq \;C \bigg( \| \mathcal{W}_1^{0} \|_{0, \Omega}^2 + \| \mathcal{W}_2^{0} \|_{0, \Omega}^2 + \| \mathcal{U}^{0} \|_{0, \Omega}^2 +
	\sum_{m=0}^n \|\mathcal{U}^{m+1}\|^2_{0,\Omega}    \\
	+  (1 + \Delta t) \sum_{m=0}^n (\|\mathcal{W}_1^{m+1}\|_{0,\Omega}^2 + \|\mathcal{W}_2^{m+1}\|_{0,\Omega}^2) \bigg). \nonumber
	\end{align}
\end{lemma}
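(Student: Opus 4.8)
The plan is to mimic the argument of Lemma~\ref{uncoupled-ADR}, but now at the level of the differences of two solutions, the genuinely new feature being the advective term driven by the difference $\mathcal{U}^{n+1}$ of the two solid displacements. Keeping the notation of Lemma~\ref{uniqueness-poro}, I would first subtract the two copies of \eqref{weak-w1} and rewrite the trilinear term by adding and subtracting $\int_\Omega(\delta_t\bu_1^{s,n+1}\cdot\nabla w_1^{2,n+1})s_1$, namely
\[
c(w_1^{1,n+1},s_1,\bu_1^{s,n+1})-c(w_1^{2,n+1},s_1,\bu_2^{s,n+1}) = \int_\Omega(\delta_t\bu_1^{s,n+1}\cdot\nabla\mathcal{W}_1^{n+1})s_1 + \int_\Omega(\delta_t\mathcal{U}^{n+1}\cdot\nabla w_1^{2,n+1})s_1,
\]
and similarly for the $\mathcal{W}_2$ equation. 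This isolates the new cross term $\int_\Omega(\delta_t\mathcal{U}^{n+1}\cdot\nabla w_1^{2,n+1})\mathcal{W}_1^{n+1}$ from the self-advection term, which retains the skew-symmetric structure of \eqref{property-term}.

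Next I would test the $\mathcal{W}_1$-equation with $s_1=\mathcal{W}_1^{n+1}$. The time-derivative term is treated with the discrete identity \eqref{discrete-identity}, producing $\tfrac12\delta_t\|\mathcal{W}_1^{n+1}\|_{0,\Omega}^2+\tfrac12\Delta t\|\delta_t\mathcal{W}_1^{n+1}\|_{0,\Omega}^2$, while the diffusion term is bounded below by $D_1^{\min}|\mathcal{W}_1^{n+1}|_{1,\Omega}^2$ through the coercivity \eqref{eq:coercivity}; these are exactly the left-hand side contributions of \eqref{bound-w}. The self-advection term gives, via \eqref{property-term} and the uniform $\mathbf{W}^{1,\infty}(\Omega)$ control of the displacements, $\int_\Omega(\delta_t\bu_1^{s,n+1}\cdot\nabla\mathcal{W}_1^{n+1})\mathcal{W}_1^{n+1}=-\tfrac12\int_\Omega\mathrm{div}(\delta_t\bu_1^{s,n+1})(\mathcal{W}_1^{n+1})^2\le \tfrac{C}{2\Delta t}\|\mathcal{W}_1^{n+1}\|_{0,\Omega}^2$, precisely as in Lemma~\ref{uncoupled-ADR}. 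The reaction difference is handled with the Lipschitz bound \eqref{eq:reaction-assumption}, yielding a contribution $\le C(\|\mathcal{W}_1^{n+1}\|_{0,\Omega}+\|\mathcal{W}_2^{n+1}\|_{0,\Omega})\|\mathcal{W}_1^{n+1}\|_{0,\Omega}$.

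The hard part will be the cross term $\int_\Omega(\delta_t\mathcal{U}^{n+1}\cdot\nabla w_1^{2,n+1})\mathcal{W}_1^{n+1}$, because $\delta_t\mathcal{U}^{n+1}$ carries the factor $1/\Delta t$ highlighted in the Introduction; a careless Young inequality at the per-step level would leave an unbounded $\tfrac1{\Delta t}$ in the final constant. The remedy I would use is to delay Young's inequality until after multiplying the whole per-step estimate by $\Delta t$: then $\Delta t\,\delta_t\mathcal{U}^{n+1}=\mathcal{U}^{n+1}-\mathcal{U}^n$ is an $O(1)$ increment, and using the (assumed) uniform $L^\infty$ bound on $\nabla w_1^{2,n+1}$ together with Cauchy--Schwarz,
\[
\Delta t\,\Big|\int_\Omega(\delta_t\mathcal{U}^{n+1}\cdot\nabla w_1^{2,n+1})\mathcal{W}_1^{n+1}\Big| \le C\,\|\mathcal{U}^{n+1}-\mathcal{U}^n\|_{0,\Omega}\,\|\mathcal{W}_1^{n+1}\|_{0,\Omega} \le C\big(\|\mathcal{U}^{n+1}\|_{0,\Omega}^2+\|\mathcal{U}^n\|_{0,\Omega}^2\big)+C\|\mathcal{W}_1^{n+1}\|_{0,\Omega}^2.
\]
Summing over $m=0,\dots,n$ then contributes exactly $C\big(\|\mathcal{U}^0\|_{0,\Omega}^2+\sum_{m=0}^n\|\mathcal{U}^{m+1}\|_{0,\Omega}^2\big)$ plus a harmless $\sum_{m=0}^n\|\mathcal{W}_1^{m+1}\|_{0,\Omega}^2$, matching the $\mathcal{U}$-terms on the right of \eqref{bound-w} with a $\Delta t$-independent constant.

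Finally I would multiply the per-step inequality by $\Delta t$ and sum over $m=0,\dots,n$: the terms $\tfrac{\Delta t}{2}\delta_t\|\mathcal{W}_1^{m+1}\|_{0,\Omega}^2$ telescope into $\tfrac12\|\mathcal{W}_1^{n+1}\|_{0,\Omega}^2-\tfrac12\|\mathcal{W}_1^{0}\|_{0,\Omega}^2$, the terms $\tfrac{\Delta t^2}{2}\|\delta_t\mathcal{W}_1^{m+1}\|_{0,\Omega}^2$ and $D_1^{\min}\Delta t|\mathcal{W}_1^{m+1}|_{1,\Omega}^2$ accumulate into the left-hand side of \eqref{bound-w}, and the $\tfrac{C}{2\Delta t}\|\mathcal{W}_1^{n+1}\|_{0,\Omega}^2$ and reaction terms become $\sum_{m=0}^n\|\mathcal{W}_i^{m+1}\|_{0,\Omega}^2$ contributions (the $\Delta t$-weighted copy accounting for the factor $1+\Delta t$). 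Adding the identical estimate for $\mathcal{W}_2^{n+1}$ and writing $D^{\min}=\min\{D_1^{\min},D_2^{\min}\}$ yields \eqref{bound-w}. As in \eqref{bound-w1h-w2h} this is a pre-Gronwall bound; uniqueness will then follow by combining \eqref{bound-w} with the poroelastic estimate \eqref{bound-up} and invoking the discrete Gronwall lemma. I expect the $1/\Delta t$ in the cross-advection term to be the only real difficulty, the remainder being a transcription of Lemma~\ref{uncoupled-ADR} to the difference variables.
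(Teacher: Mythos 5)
Your proposal is correct and follows essentially the same route as the paper's proof: the same add-and-subtract splitting of the trilinear term, the same test functions, the skew-symmetry/$\mathbf{W}^{1,\infty}$ treatment of the self-advection term, the Lipschitz bound \eqref{eq:reaction-assumption} for the reaction difference, and the same resolution of the $1/\Delta t$ issue in the cross term (the paper applies Young with weights $\Delta t$ and $1/\Delta t$ before multiplying by $\Delta t$, which after summation gives exactly your $\|\mathcal{U}^{m+1}-\mathcal{U}^m\|_{0,\Omega}^2$ increments and hence the $\|\mathcal{U}^0\|_{0,\Omega}^2+\sum_m\|\mathcal{U}^{m+1}\|_{0,\Omega}^2$ terms). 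The only cosmetic difference is the order in which Young's inequality and the multiplication by $\Delta t$ are performed, which does not change the estimate.
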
}
\begin{proof}
	We proceed analogously as in the proof of Lemma \ref{uniqueness-poro}. In fact, for the ADR problem, we can get from \eqref{weak-w1} and \eqref{weak-w2} with test functions $ \mathcal{W}_1^{n+1}$ and $\mathcal{W}_2^{n+1}$, respectively,
	the relations
	\begin{align}
	&	\frac{1}{2} \left( \delta_{t} \| \mathcal{W}_1^{n+1} \|_{0, \Omega}^2 + \Delta t \| \delta_{t}\mathcal{W}_1^{n+1} \|_{0, \Omega}^2\right) + D_1^{\min} | \mathcal{W}_1^{n+1}|^2_{1,\Omega} \nonumber \\
	&\qquad  \leq \int_{\Omega} (f_1^{n+1}-f_2^{n+1})  \mathcal{W}_1^{n+1}
	- \int_{\Omega} (\delta_{t}\bu_1^{s,n+1}\cdot \nabla \mathcal{W}_1^{n+1}
	+ \delta_{t} \mathcal{U}^{n+1}\cdot \nabla w_1^{2,n+1}) \mathcal{W}_1^{n+1},
	\cblue{\label{uniquew1}}\\
	&		\frac{1}{2} \left( \delta_{t} \| \mathcal{W}_2^{n+1} \|_{0, \Omega}^2 + \Delta t \| \delta_{t}\mathcal{W}_2^{n+1} \|_{0, \Omega}^2\right) + D_2^{\min} | \mathcal{W}_2^{n+1}|^2_{1,\Omega} \nonumber\\
	&\qquad  \leq \int_{\Omega} (g_1^{n+1}-g_2^{n+1})  \mathcal{W}_2^{n+1}	- \int_{\Omega} (\delta_{t}\bu_1^{s,n+1}\cdot \nabla \mathcal{W}_2^{n+1}
	+ \delta_{t} \mathcal{U}^{n+1}\cdot \nabla w_2^{2,n+1}) \mathcal{W}_2^{n+1}.
	\cblue{\label{uniquew2}}
	\end{align}
	As in Lemma \ref{uncoupled-ADR}, we integrate by parts \eqref{uniquew1} and assume that $w_i^{j,n+1} \in W^{1,\infty}(\Omega)$, $i,j =1,2$, which yields
	\begin{align}
	\nonumber		\frac{1}{2} \left( \delta_{t} \| \mathcal{W}_1^{n+1} \|_{0, \Omega}^2 + \Delta t \| \delta_{t}\mathcal{W}_1^{n+1} \|_{0, \Omega}^2\right) + D_1^{\min} | \mathcal{W}_1^{n+1}|^2_{1,\Omega} \leq \| f_1^{n+1}-f_2^{n+1}\|_{0,\Omega} \| \mathcal{W}_1^{n+1} \|_{0,\Omega} \nonumber  \\
	+ \frac{1}{2} \|\delta_{t}\bu_1^{s,n+1}\|_{1, \infty, \Omega} \| \mathcal{W}_1^{n+1} \|_{0,\Omega}^2 + \| w_1^{2,n+1}\|_{1,\infty, \Omega} \| \delta_{t} \mathcal{U}^{n+1}\|_{0,\Omega} \| \mathcal{W}_1^{n+1}\|_{0,\Omega}, \cblue{\label{bound1}}
	\end{align}
	and applying Cauchy-Schwarz and Young inequalities together with \eqref{eq:reaction-assumption} and \cblue{the boundedness of $\|\bu_1^{s,n+1} - \bu_1^{s,n}\|_{1, \infty, \Omega}$}, we get the bound
	\begin{align}
	\nonumber     \frac{1}{2} \left( \delta_{t} \| \mathcal{W}_1^{n+1} \|_{0, \Omega}^2 + \Delta t \| \delta_{t}\mathcal{W}_1^{n+1} \|_{0, \Omega}^2\right) + D_1^{\min} | \mathcal{W}_1^{n+1}|^2_{1,\Omega} \leq C \bigg(  \| \mathcal{W}_1^{n+1}\|_{0,\Omega}^2 + \| \mathcal{W}_2^{n+1}\|_{0,\Omega}^2 \\
	+  \frac{1}{2 \Delta t} \| \mathcal{W}_1^{n+1} \|_{0,\Omega}^2
	+  \frac{\Delta t}{2} \| \delta_{t} \mathcal{U}^{n+1}\|_{0,\Omega}^2
	+ \frac{1}{2 \Delta t} \| \mathcal{W}_1^{n+1}\|_{0,\Omega}^2 \bigg). \cblue{\label{bound2}}
	\end{align}
	Multiplying \eqref{bound2} by $\Delta t$ and taking summation over $n$, we deduce that
	\begin{align}
	\nonumber       \| \mathcal{W}_1^{n+1} \|_{0, \Omega}^2 + \Delta t^2 \sum_{m=0}^n \| \delta_{t}\mathcal{W}_1^{m+1} \|_{0, \Omega}^2  + D_1^{\min} \Delta t \sum_{m=0}^n  |\mathcal{W}_1^{m+1}|^2_{1,\Omega} \; \leq \;  C \bigg(\| \mathcal{W}_1^{0} \|_{0, \Omega}^2 + \|\mathcal{U}^{0} \|^2_{0,\Omega} \\
	+  \sum_{m=0}^n \big( (1+ \Delta t) \|\mathcal{W}_1^{m+1}\|_{0,\Omega}^2 + \| \mathcal{W}_2^{m+1} \|_{0,\Omega}^2 \big) +  \sum_{m=0}^n \|\mathcal{U}^{m+1}\|^2_{0,\Omega}
	\bigg). \cblue{\label{bound3}}
	\end{align}
	\cblue{Now, proceeding for equation \eqref{uniquew2} in a similar way as done for \eqref{bound1}-\eqref{bound3}}, we can obtain the same bound for  $\mathcal{W}_2^{n+1}$, which together with  \eqref{bound3} gives \eqref{bound-w}.
\end{proof}

\cblue{With the previous two results, we are in a position to establish the announced property of the weak solution to problem \eqref{weak-u}-\eqref{weak-w2}.}
\cblue{\begin{theorem}\label{thm:unique-h}
		The semi-discrete weak formulation  \eqref{weak-u}-\eqref{weak-w2} of the coupled problem \eqref{eq:coupled} has a unique solution.
	\end{theorem}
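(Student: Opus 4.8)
The plan is to establish uniqueness through a coupled energy estimate followed by a discrete Gronwall argument, using the two difference bounds \eqref{bound-up} and \eqref{bound-w} as the main ingredients. I would begin by assuming that \eqref{weak-u}--\eqref{weak-w2} admits two solutions driven by \emph{identical} data and initial conditions, so that in the notation of Lemma~\ref{uniqueness-poro} one has $\bb_1^{m+1}=\bb_2^{m+1}$, $\ell_1^{m+1}=\ell_2^{m+1}$ and $\mathcal{U}^0=\mathcal{P}^0=\bigchi^0=\mathcal{W}_1^0=\mathcal{W}_2^0=\cero$. With this choice every data-difference and initial-difference term on the right-hand sides of \eqref{bound-up} and \eqref{bound-w} drops out, and uniqueness reduces to proving that the difference fields $\mathcal{U}^{n+1},\mathcal{P}^{n+1},\bigchi^{n+1},\mathcal{W}_1^{n+1},\mathcal{W}_2^{n+1}$ vanish for every $n$.

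Next I would add \eqref{bound-up} and \eqref{bound-w} and introduce the combined discrete energy $E^{n+1}:=\|\mathcal{U}^{n+1}\|_{1,\Omega}^2+c_0\|\mathcal{P}^{n+1}\|_{0,\Omega}^2+\|\bigchi^{n+1}\|_{0,\Omega}^2+\|\mathcal{W}_1^{n+1}\|_{0,\Omega}^2+\|\mathcal{W}_2^{n+1}\|_{0,\Omega}^2$. Adding is essential because neither subsystem closes in isolation: the poroelastic estimate \eqref{bound-up} carries $\|\mathcal{W}_i^{m+1}\|_{0,\Omega}$ on its right-hand side through the active-stress field $r$ (via \eqref{property-of-r}), while the ADR estimate \eqref{bound-w} carries $\|\mathcal{U}^{m+1}\|_{0,\Omega}$ through the advective coupling $c(\cdot,\cdot,\bu^{s,n+1})$; only the sum produces a self-contained inequality of the form $E^{n+1}\le C\sum_{m=0}^{n}\bigl(\|\mathcal{P}^{m+1}\|_{0,\Omega}^2+\|\bigchi^{m+1}\|_{0,\Omega}^2+\|\mathcal{W}_1^{m+1}\|_{0,\Omega}^2+\|\mathcal{W}_2^{m+1}\|_{0,\Omega}^2+\|\mathcal{U}^{m+1}\|_{0,\Omega}^2\bigr)$, where the nonnegative dissipation terms on the left have been discarded.

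I would then bound the summed right-hand side by $C\sum_{m=0}^{n}E^{m+1}$: the fields $\bigchi$, $\mathcal{W}_1$, $\mathcal{W}_2$ appear verbatim in $E^{m+1}$, one has $\|\mathcal{U}^{m+1}\|_{0,\Omega}^2\le\|\mathcal{U}^{m+1}\|_{1,\Omega}^2$, and the pressure contribution is controlled either by $c_0\|\mathcal{P}^{m+1}\|_{0,\Omega}^2$ when $c_0>0$ or, uniformly in $c_0$, by the $\ell^2(H^1)$-dissipation retained on the left of \eqref{bound-up} together with Poincar\'e's inequality. Absorbing the current-step ($m=n$) contributions into the left-hand side and invoking a discrete Gronwall inequality then yields $E^{n+1}\le C\,E^{0}\exp(C\,n\Delta t)$; since $E^0=0$, it follows that $E^{n+1}=0$ for all $n$, i.e.\ the two solutions coincide and \eqref{weak-u}--\eqref{weak-w2} is uniquely solvable.

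The main obstacle I anticipate is precisely this last closure step. The backward-Euler discretisation of the solid velocity makes $\delta_t\bu^{s,n+1}=\tfrac{1}{\Delta t}\mathcal{U}^{n+1}-\tfrac{1}{\Delta t}\mathcal{U}^{n}$ appear in the advective coupling, so the constants feeding the ADR estimate scale like $1/\Delta t$ before being summed; one must verify that, after multiplying by $\Delta t$ and summing as in Lemmas~\ref{uniqueness-poro} and \ref{uncoupled-ADR}, these contributions assemble into the clean $\sum_m E^{m+1}$ structure above and that the current-step terms can indeed be absorbed on the left for $\Delta t$ sufficiently small. The remaining work---Cauchy--Schwarz, Young, summation by parts, and the bookkeeping of constants so that $C$ stays independent of $\lambda$ (preserving the locking-free character of the result)---follows the routine already carried out in the two preceding lemmas.
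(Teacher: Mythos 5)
Your proposal is correct and follows essentially the same route as the paper: the authors likewise combine the difference estimates \eqref{bound-up} and \eqref{bound-w} and apply Gronwall's lemma to obtain a bound on the difference of two solutions in terms of the differences of the data, from which uniqueness follows immediately when the data coincide. Your additional remarks on absorbing the current-step terms, on the $1/\Delta t$ scaling from the discrete solid velocity, and on handling the pressure term when $c_0$ is small are sensible refinements of bookkeeping that the paper leaves implicit, but they do not change the argument.
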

	\begin{proof}
			The desired estimate is established by combining  \eqref{bound-up} and \eqref{bound-w}, and Gronwall's lemma
			\begin{align*}
			\| \mathcal{U}^{n+1}\|_{1,\Omega} + \| \mathcal{P}^{n+1}\|_{0,\Omega} + \| \bigchi^{n+1}\|_{0,\Omega}
			+ \| \mathcal{W}_1^{n+1} \|_{0, \Omega} + \| \mathcal{W}_2^{n+1} \|_{0, \Omega} +  \| \mathcal{P}\|_{l^2(H^1(\Omega))} +  \| \nabla \mathcal{W}_1 \|_{l^2(L^2(\Omega))}  \\
			+ \| \nabla \mathcal{W}_2 \|_{l^2(L^2(\Omega))} \le C\biggl( \|\mathcal{U}^0 \|_{1,\Omega} + \|\mathcal{P}^0 \|_{0,\Omega} + \|\bigchi^0 \|_{0,\Omega} +	\| \mathcal{W}_1^0 \|_{0,\Omega} + \| \mathcal{W}_2^0 \|_{0,\Omega}  \\
			  + \sum_{m=0}^n \| \bb_1^{m+1} - \bb_2^{m+1}\|_{0, \Omega} + \| \ell_1 - \ell_2 \|_{l^2(L^2(\Omega))} \biggr),
			\end{align*}
		from which, we can ensure the existence of at most one weak solution to the system  \eqref{weak-u}-\eqref{weak-w2}.
	\end{proof}}
	\subsection{Continuous dependence on data}
	\begin{lemma}
		The solution $(\bu^{s,n+1},p^{f,n+1},\psi^{n+1},w_1^{n+1},w_2^{n+1})\in\bV \times Q\times Z\times H^1(\Omega)\times H^1(\Omega)$ of
		problem \eqref{weak-u}-\eqref{weak-w2}  satisfies
		\begin{align*}
		\begin{split}
		&\Vert \bu^{s,n+1}\Vert_{1,\Omega} + \sqrt{c_0}\Vert p^{f,n+1}\Vert_{0,\Omega} + \Vert \psi^{n+1}\Vert_{0,\Omega} + \Vert p^{f}\Vert_{\ell^2(H^1(\Omega))}+ \Vert w_{1}^{n+1}\Vert_{0,\Omega} + \Vert w^{n+1}_{2}\Vert_{0,\Omega}\\
		&\leq C \sqrt{\exp} \Big\{n\Delta t + \Vert\bu^{s,0}\Vert_{1,\Omega} + \Vert p^{f,0}\Vert_{0,\Omega}+ \Vert \psi^{0}\Vert_{0,\Omega}+\Vert w_{1}^{0}\Vert_{0,\Omega}+\Vert w_{2}^{0}\Vert_{0,\Omega}+\sum_{m=0}^n \Vert\bb^{m+1}\Vert_{0,\Omega} + \norm{\ell}_{\ell^2(L^2(\Omega))} \Big\}.
		\end{split}
		\end{align*}
		where $C>0$ is a constant independent of $\Delta t$ and $\lambda$.
	\end{lemma}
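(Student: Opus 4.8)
The plan is to assemble the asserted global estimate from the two stability bounds already proved for the decoupled subproblems, exploiting the fact that the coupling enters each subproblem only through data: the concentrations affect the poroelastic block solely via the active-stress field $r$, while the displacement affects the ADR block solely via the advecting velocity $\delta_t\bu^{s,n+1}$. Because \eqref{bound-w1h-w2h-stability} bounds the concentrations without reference to the poroelastic unknowns $p^{f,n+1}$ and $\psi^{n+1}$ (the displacement entering only through its assumed $\mathbf{W}^{1,\infty}(\Omega)$ bound), I would resolve the ADR block first and then feed its output into the poroelastic estimate, thereby avoiding any circular dependence between the two sets of constants.

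First I would record the ADR stability bound \eqref{bound-w1h-w2h-stability}, which already controls $\Vert w_1^{n+1}\Vert_{0,\Omega}+\Vert w_2^{n+1}\Vert_{0,\Omega}$ (together with the $\ell^2(L^2)$ gradient norms) by $C\sqrt{\exp}\{n\Delta t+\Vert w_1^0\Vert_{0,\Omega}+\Vert w_2^0\Vert_{0,\Omega}\}$. Next, in the poroelastic estimate \eqref{eq:CD1} the source $\hat r^{m+1}$ is, for a genuine solution, equal to $r^{m+1}=r(w_1^{m+1},w_2^{m+1})$; using the first inequality in \eqref{property-of-r} I would bound $\Vert r^{m+1}\Vert_{0,\Omega}\le \Vert w_1^{m+1}\Vert_{0,\Omega}+\Vert w_2^{m+1}\Vert_{0,\Omega}$ and then substitute the ADR bound just obtained. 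Since $\sqrt{\exp}\{\cdot\}$ is monotone, the finite sum $\sum_{m=0}^{n}\Vert r^{m+1}\Vert_{0,\Omega}$ is dominated by $(n+1)\,C\sqrt{\exp}\{n\Delta t+\Vert w_1^0\Vert_{0,\Omega}+\Vert w_2^0\Vert_{0,\Omega}\}$, and the polynomial factor is absorbed into the exponential. Adding the two resolved bounds and using $\sqrt{a+b}\le\sqrt a+\sqrt b$ then yields precisely the stated right-hand side, in which the contribution $\sum_{m}\Vert\hat r^{m+1}\Vert_{0,\Omega}$ of \eqref{eq:CD1} has been replaced by the term $n\Delta t$ together with the initial concentrations $\Vert w_1^0\Vert_{0,\Omega}+\Vert w_2^0\Vert_{0,\Omega}$.

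Throughout, the constant $C$ inherits the $\lambda$-independence established in the proofs of \eqref{stability-uh-ph} and \eqref{eq:CD1}, so the final estimate is robust as $\lambda\to\infty$ (locking-free). I would also note that $\Vert\psi^{n+1}\Vert_{0,\Omega}$ is controlled, exactly as in the derivation of \eqref{inf-sup-psi}, through the inf-sup condition \eqref{inf-sup} in terms of $\Vert\bu^{s,n+1}\Vert_{1,\Omega}$, $\Vert r^{n+1}\Vert_{0,\Omega}$ and $\Vert\bb^{n+1}\Vert_{0,\Omega}$, the middle quantity again being handled by \eqref{property-of-r} and the ADR bound.

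The main obstacle is the advective coupling term $c(w_i^{n+1},w_i^{n+1},\bu^{s,n+1})$, in which the backward-Euler solid velocity $\delta_t\bu^{s,n+1}=(\bu^{s,n+1}-\bu^{s,n})/\Delta t$ carries a factor $1/\Delta t$. This is the mechanism that, as emphasised in the Introduction, distinguishes the present analysis from the Darcy-velocity advection of \cite{brun18,brun19}. The crucial device is the skew-symmetry identity \eqref{property-term}, which trades the advective term for $\tfrac12\mathrm{div}\,(\delta_t\bu^{s,n+1})(w_i^{n+1})^2$; under the uniform $\mathbf{W}^{1,\infty}(\Omega)$ bound on $\bu^{s,n+1},\bu^{s,n}$ used in Lemma~\ref{uncoupled-ADR}, the offending $1/\Delta t$ produces a constant that is exactly compensated by the factor $\Delta t$ arising when the inequality is multiplied by $\Delta t$ and summed in time. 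Verifying that this compensation keeps the Gronwall constant independent of $\Delta t$—so that the exponential factor does not degenerate as $\Delta t\to0$—is the delicate point, but it has already been secured at the level of Lemma~\ref{uncoupled-ADR}, and here it only needs to be propagated through the combination of the two estimates.
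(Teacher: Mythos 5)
Your proposal reaches the stated estimate from the same two structural facts the paper uses --- the bound $\|r^{m+1}\|_{0,\Omega}\le\|w_1^{m+1}\|_{0,\Omega}+\|w_2^{m+1}\|_{0,\Omega}$ from \eqref{property-of-r}, and the fact that the ADR energy estimate closes in the concentrations alone once the advective term is rewritten via \eqref{property-term} and the $\mathbf{W}^{1,\infty}$ bound on the displacement increments is invoked --- but it organises them differently. The paper does not chain the two already-resolved uncoupled bounds. It re-derives the pre-Gronwall inequalities for the \emph{coupled} system with the same test functions ($\delta_t\bu^{s,n+1}$, $p^{f,n+1}$, $\delta_t\psi^{n+1}$ for Biot and $w_i^{n+1}$ for the ADR part), obtaining \eqref{stability-result-final} and \eqref{bound-w1h-w2h-final}, adds them, replaces $\sum_m\|r^{m+1}\|_{0,\Omega}^2$ by a multiple of $\sum_m(\|w_1^{m+1}\|_{0,\Omega}^2+\|w_2^{m+1}\|_{0,\Omega}^2)$, and applies Gronwall \emph{once} to the combined quantity. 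Your sequential route (resolve the ADR block via \eqref{bound-w1h-w2h-stability}, then feed it into \eqref{eq:CD1}, with $\psi^{n+1}$ recovered through the inf-sup argument of \eqref{inf-sup-psi}) is legitimate because the coupling really is triangular in the sense you describe, and it has the virtue of making that one-directional dependence explicit; the paper's simultaneous treatment buys a cleaner Gronwall bookkeeping, as explained next.

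The one step I would not let pass as written is ``the polynomial factor is absorbed into the exponential.'' Estimating $\sum_{m=0}^{n}\|r^{m+1}\|_{0,\Omega}$ a priori by $(n+1)$ times the resolved ADR bound injects a factor $n+1=t_{n+1}/\Delta t$ into the constant multiplying the data, and $n+1$ is not dominated by $C e^{c\,n\Delta t}$ with $C,c$ independent of $\Delta t$; absorbing it therefore costs precisely the $\Delta t$-uniformity that the lemma asserts. The paper never creates this factor, because the terms $\sum_m\|r^{m+1}\|_{0,\Omega}^2\le 2\sum_m(\|w_1^{m+1}\|_{0,\Omega}^2+\|w_2^{m+1}\|_{0,\Omega}^2)$ are kept inside the Gronwall kernel together with the ADR contribution, so the discrete Gronwall lemma handles them in a single pass with no polynomial prefactor. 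The fix that preserves your sequential reading is to substitute the $r$-bound but \emph{not} the resolved ADR estimate into the poroelastic inequality, i.e.\ leave $\sum_m(\|w_1^{m+1}\|_{0,\Omega}^2+\|w_2^{m+1}\|_{0,\Omega}^2)$ on the right-hand side and close with one Gronwall application to the sum of the two inequalities --- which is exactly the paper's proof.
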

	\begin{proof}
		We focus first on the Biot system. Proceeding as in the proof of Lemma \ref{uncoupled-P}, we take $\bv^s=\delta_t \bu^{s,n+1}$, $q^f=p^{f,n+1}$ and $\phi=\delta_t \psi^{n+1}$ in \eqref{weak-u}, \eqref{weak-p} and \eqref{weak-psi}, respectively, to obtain
		\begin{align}
\nonumber
		&\Vert \bu^{s,n+1}\Vert^2_{1,\Omega} + c_0\Vert p^{f,n+1}\Vert^2_{0,\Omega} + \Vert \psi^{n+1}\Vert^2_{0,\Omega} +\Delta t\sum_{m=0}^{n}\Vert p^{f,m+1}\Vert^2_{1,\Omega} \\
\label{stability-result-final}		&\qquad \leq C_1\Big\{\Vert\bu^{s,0}\Vert^2_{1,\Omega} + \Vert p^{f,0}\Vert^2_{0,\Omega}+ \Vert \psi^{0}\Vert^2_{0,\Omega}+ \sum_{m=0}^{n}\Vert\psi^{m+1}\Vert_{0,\Omega}^2+\sum_{m=0}^{n}\Vert p^{f,m+1}\Vert^2_{0,\Omega}\\
\nonumber		&+\sum_{m=0}^{n}\Vert {r}^{m+1}\Vert_{0,\Omega}^2+\sum_{m=0}^{n} \Vert\bb^{m+1}\Vert_{0,\Omega}^2 + \Delta t\sum_{m=0}^{n} \norm{\ell^{m+1}}^2_{0,\Omega} \Big\} + C_2\Big\{\Vert {r}^{n+1}\Vert^2_{0,\Omega}+\Vert\bb^{n+1}\Vert^2_{0,\Omega}\Big\}.
		\end{align}
		In turn, for the ADR problem, we proceed as in the proof of Lemma \ref{uncoupled-ADR}, taking $s_1=w_{1}^{n+1}$ and $s_2=w_{2}^{n+1}$ in \eqref{weak-w1} and \eqref{weak-w2}, respectively, to get
		\begin{align}\label{bound-w1h-w2h-final}
		\begin{split}
		&\Vert w_{1}^{n+1}\Vert^2_{0,\Omega}+\Vert w_{2}^{n+1}\Vert^2_{0,\Omega}+ \Delta t\sum_{m=0}^{n}(\Vert \nabla w_{1}^{m+1}\Vert^2_{0,\Omega}+ \Vert \nabla w_{2}^{m+1}\Vert^2_{0,\Omega})\\
		& \qquad \qquad\qquad \qquad \leq C_3\Big\{n+ \Vert w_{1}^{0}\Vert^2_{0,\Omega}+\Vert w_{2}^{0}\Vert^2_{0,\Omega}+\sum_{m=0}^{n}\Big(\Vert w_{1}^{m+1}\Vert^2_{0,\Omega}+\Vert w_{2}^{m+1}\Vert^2_{0,\Omega}\Big) \Big \}.
		\end{split}
		\end{align}	
		Combining \eqref{stability-result-final} and \eqref{bound-w1h-w2h-final}, we obtain a preliminar stability bound for the coupled system \eqref{weak-u}-\eqref{weak-w2} 	
		\begin{align*}
		 &\Vert \bu^{s,n+1}\Vert^2_{1,\Omega} + c_0\Vert p^{f,n+1}\Vert^2_{0,\Omega} + \Vert \psi^{n+1}\Vert^2_{0,\Omega} +\Delta t\sum_{m=0}^{n}\Vert p^{f,m+1}\Vert^2_{1,\Omega}+\Vert w_{1}^{n+1}\Vert^2_{0,\Omega}+\Vert w_{2}^{n+1}\Vert^2_{0,\Omega} \\
		&\qquad \leq C_1\Big\{\Vert\bu^{s,0}\Vert^2_{1,\Omega} + \Vert p^{f,0}\Vert^2_{0,\Omega}+ \Vert \psi^{0}\Vert^2_{0,\Omega}+ \sum_{m=0}^{n}\Vert\psi^{m+1}\Vert_{0,\Omega}^2+\sum_{m=0}^{n}\Vert p^{f,m+1}\Vert^2_{0,\Omega}\\
		&\qquad \quad+\sum_{m=0}^{n}\Vert {r}^{m+1}\Vert_{0,\Omega}^2+\sum_{m=0}^{n} \Vert\bb^{m+1}\Vert_{0,\Omega}^2 + \Delta t\sum_{m=0}^{n} \norm{\ell^{n+1}}^2_{0,\Omega} \Big\} + C_2\Big\{\Vert {r}^{n+1}\Vert^2_{0,\Omega}+\Vert\bb^{n+1}\Vert^2_{0,\Omega}\Big\}\\
		&\qquad\quad+ C_3\Big\{n\Delta t+ \Vert w_{1}^{0}\Vert^2_{0,\Omega}+\Vert w_{2}^{0}\Vert^2_{0,\Omega}+\sum_{m=0}^{n}\Big(\Vert w_{1}^{m+1}\Vert^2_{0,\Omega}+\Vert w_{2}^{m+1}\Vert^2_{0,\Omega}\Big) \Big \},
		\end{align*}	
		and therefore, recalling the bound for $r$ given in Section \ref{preliminaries}, and applying Gronwall's inequality to the resulting estimate, we obtain the desired result.
	\end{proof}
\begin{remark}\cblue{
We have demonstrated the well-posedness of the fully coupled system by considering the time discretisation which is one of the main purpose of this contribution, and the analysis of continuous in time problem is not presented here explicitly, and which also could be of potential  interest as discussed by many researchers, for instance, see \cite{anaya18}. We stress that the analysis of continuous in time problem also can be established by proceeding analogously to the analysis presented here in the context of time discretisation and adopting the similar arguments used in \cite{anaya18} with appropriate choices of Sobolev spaces.}
\end{remark}

	\section{Mixed-primal Galerkin method}\label{sec:FE}
	\subsection{Fully discrete formulation}
	Let us consider a family $ \lbrace \cT_h \rbrace_{h>0} $
	of shape-regular, quasi-uniform partitions of the spatial domain $\bar{\Omega}$ into affine elements (triangles in 2D or
	tetrahedra in 3D) $E$ of
	diameter $h_E$, where $h = \max\{ h_E:\, E\in \cT_h\}$ denotes the mesh size.
	Finite-dimensional subspaces of the functional spaces employed in Section~\ref{sec:solvability} will be
	defined in the following manner
	\begin{align}
	\bV_h &:= \cblue{\lbrace \bv^s_h \in \mathbf{C}(\overline{\Omega}): \bv^s_h|_{E} \in [\mathbb{P}_1(E)\oplus {\rm span}\{b_E\}]^d\ \forall E\in \cT_h, \text{ and } \bv^s_h|_{\Gamma}= \cero\rbrace}, \nonumber\\
	Q_h &:= \cblue{\lbrace q^f_h \in {C}(\overline{\Omega}): q^f_h|_{E} \in \mathbb{P}_1({E})\ \forall E\in \cT_h, \text{ and } q^f_h|_{\Sigma}= 0\rbrace},
	\label{eq:FEspaces}
	\\
	Z_h &:= \lbrace \phi_h \in L^2(\Omega): \phi_h|_{E} \in \mathbb{P}_1({E})\ \forall E\in \cT_h\rbrace,\quad
	W_h := \lbrace w_h \in {C}(\overline{\Omega}): w_h|_{E} \in \mathbb{P}_1({E})\ \forall E\in \cT_h\rbrace,\nonumber
	\end{align}
	where $\mathbb{P}_k(E) $ denotes the space of polynomials of degree less than or equal than $ k $ defined locally over $ E \in \cT_h$, and $b_E:= \varphi_1\varphi_2\varphi_3$ is a $\mathbb{P}_3$ bubble function in $E$, and $\varphi_1,\,\varphi_2\,,\varphi_3$ are the barycentric coordinates of $E$. Let us recall that the pair $(\bV_h,Z_h)$ (known as the MINI element)
	is inf-sup stable (see, e.g., \cite{boffi13}).
	
	Considering reaction and coupling terms $f,g,r$ discretised implicitly,
	the fully discrete scheme associated with \eqref{eq:weak} is defined as: From initial data $\bu^{s,0},p^{f,0},\psi^{0}, w_1^0, w_2^0$
	(which will be projections of the continuous initial conditions of each field) and
	for $n=1,\ldots$, find $\bu_h^{s,n+1}\in\bV_h,p_h^{f,n+1}\in Q_h,\psi_h^{n+1}\in Z_h,w_{1,h}^{n+1}\in W_h,w_{2,h}^{n+1}\in W_h$ such that
	\begin{alignat}{5}
	&&   a_1(\bu_h^{s,n+1},\bv_h^s)   &&                 &\;+&\; b_1(\bv_h^s,\psi_h^{n+1})     &=&\;F_{r_h^{n+1}}(\bv_h^s)&\; \;\forall \bv_h^s\in\bV_h, \label{weak-u-h}\\
	\tilde{a}_2(p_h^{f,n+1},q_h^f)   &\;+&               &&      a_2(p_h^{f,n+1},q_h^f)   &\;-&\;   \tilde{b}_2(q_h^f,\psi_h^{n+1})  &=&\; G_{\ell^{n+1}}(q_h^f) &\; \; \forall q_h^f\in Q_h, \label{weak-p-h}\\
	&&b_1(\bu_h^{s,n+1},\phi_h)  &\;+\;& b_2(p_h^{f,n+1},\phi_h)&\;-&\; a_3(\psi_h^{n+1},\phi_h) &=&\; 0 &\; \; \forall\phi_h\in Z_h, \label{weak-psi-h}    \\
	\tilde{a}_4(w_{1,h}^{n+1},s_{1,h}) &\;+ &\; a_4(w_{1,h}^{n+1},s_{1,h}) &\;+ &  c(w_{1,h}^{n+1},s_{1,h},\bu_h^{s,n+1})  & & & = &\; J_{f_h^{n+1}}(s_{1,h}) &\; \; \forall s_{1,h} \in W_h, \label{weak-w1-h}\\
	\tilde{a}_5(w_{2,h}^{n+1},s_{2,h}) &\;+ &\; a_5(w_{2,h}^{n+1},s_{2,h}) &\;+ &   c(w_{2,h}^{n+1},s_{2,h},\bu_h^{s,n+1}) &&& =&\; J_{g_h^{n+1}}(s_{2,h}) &\; \; \forall s_{2,h}\in W_h. \label{weak-w2-h}
	\end{alignat}
	\subsection{Stability of the discrete solutions}
	\cblue{The following two lemmas will serve to establish the stability result for the discrete solutions.}
	\cblue{\begin{lemma}
		Assume that $(\bu_h^{s,n+1}, p_h^{f,n+1}, \psi_h^{n+1}, {w}^{n+1}_{1,h}, {w}^{n+1}_{2,h}) \in \mathbf{V}_h\times Q_h\times Z_h\times W_h\times W_h$ is solution of problem \eqref{weak-u-h}-\eqref{weak-w2-h}. Then
\begin{align}
\nonumber
&\frac{1}{2\lambda}\Vert\psi_h^{n+1}\Vert^2_{0,\Omega} + \frac{1}{2}\Big(c_0+\frac{\alpha^2}{\lambda}\Big)\Big(\Vert p^{f,n+1}_h\Vert^2_{0,\Omega} + \Delta t^2 \sum_{m=0}^{n}\Vert \delta_t p^{f,m+1}_h\Vert^2_{0,\Omega}\Big)+\frac{\kappa_1c_p\Delta t}{2\eta}\sum_{m=0}^{n}\Vert p^{f,m+1}_h\Vert^2_{1,\Omega}\\
\label{preliminar-stab-p-psi-4-dis}		& \leq \frac{1}{2\lambda}\Vert\psi_h^{0}\Vert^2_{0,\Omega}+ \frac{1}{2}\Big(c_0+\frac{\alpha^2}{\lambda}\Big)\Vert p_h^{f,0}\Vert^2_{0,\Omega}+ \frac{2\alpha^2}{\lambda}\sum_{m=0}^{n}\Vert p^{f,m+1}_h\Vert^2_{0,\Omega}+\frac{\eta\Delta t}{2\kappa_1c_p}\sum_{m=0}^{n} \norm{\ell^{m+1}}^2_{0,\Omega}\\
\nonumber		&\, + \frac{1}{\mu C_{k,1}}\Vert\psi_h^{n+1}\Vert^2_{0,\Omega}+ \frac{\mu C_{k,1}}{2}\Vert\bu_h^{s,n+1}\Vert^2_{1,\Omega}+\frac{2}{\mu C_{k,1}}\sum_{m=0}^{n-1}\Vert\psi_h^{m+1}\Vert^2_{0,\Omega}+\frac{\mu C_{k,1}\Delta t^2}{4}\sum_{m=0}^{n-1}\Vert\delta_t\bu_h^{m+1}\Vert^2_{1,\Omega},
\end{align}
\begin{align}\label{stability-uh-final-dis}
\begin{split}
&\mu C_{k,1}\Vert \bu_h^{s,n+1}\Vert^2_{1,\Omega} + \frac{\mu C_{k,1}\Delta t^2}{4}\sum_{m=0}^{n}\Vert\delta_t \bu_h^{s,m+1}\Vert^2_{1,\Omega} \\
&\qquad\qquad  \leq  C_1\Big\{\Vert\bu_h^{s,0}\Vert^2_{1,\Omega} + \sum_{m=0}^{n}\Vert\psi_h^{m+1}\Vert_{0,\Omega}^2+\sum_{m=0}^{n}\Vert {r}^{m+1}_h\Vert_{0,\Omega}^2+\sum_{m=0}^{n} \Vert\bb^{m+1}\Vert_{0,\Omega}^2\Big\},
\end{split}
\end{align}
and
\begin{align}\label{inf-sup-psi-dis}
\begin{split}
&\Vert\psi^{n+1}_h\Vert^2_{0,\Omega}\leq C_2\Big\{\Vert\bu_h^{s,n+1}\Vert^2_{1,\Omega}+ \Vert {r}^{n+1}_h\Vert^2_{0,\Omega}+\Vert\bb^{n+1}\Vert^2_{0,\Omega}\Big\},
\end{split}
\end{align}
where $C_1, C_2$ are positive constants independent of $\lambda, h,$ and $\Delta t$.
	\end{lemma}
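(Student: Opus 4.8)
The proof mirrors closely the arguments leading to the semidiscrete estimates \eqref{stability-uh-final}, \eqref{preliminar-stab-p-psi-4} and \eqref{inf-sup-psi}, exploiting that the discretisation is conforming, so that the coercivity and continuity bounds \eqref{eq:continuity}--\eqref{eq:coercivity} (in particular Korn's inequality and the Poincar\'e estimate) remain valid when restricted to the finite element spaces $\bV_h, Q_h, Z_h, W_h$. I would establish the three estimates in turn, tracking carefully that no inverse power of $\lambda$ survives in any of the constants.

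First, to obtain \eqref{stability-uh-final-dis}, I would take $\bv_h^s=\delta_t\bu_h^{s,n+1}\in\bV_h$ in \eqref{weak-u-h}, apply the discrete-in-time identity \eqref{discrete-identity} to $a_1(\bu_h^{s,n+1},\delta_t\bu_h^{s,n+1})$, and bound the right-hand side by Cauchy--Schwarz and Young's inequalities together with the continuity of $F_{r_h^{n+1}}$. Choosing the Young parameters as in the semidiscrete case, namely $\delta_1=\delta_3=\tfrac{\mu C_{k,1}\Delta t}{2}$ and $\delta_2=\tfrac{\mu C_{k,1}\Delta t}{2C_{k,2}}$, absorbs the terms in $\Vert\delta_t\bu_h^{s,n+1}\Vert_{1,\Omega}$ into the left-hand side; multiplying by $\Delta t$, summing over $n$, and telescoping the $\delta_t\Vert\beps(\bu_h^{s,n+1})\Vert^2_{0,\Omega}$ contribution then yields \eqref{stability-uh-final-dis}.

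Next, for \eqref{preliminar-stab-p-psi-4-dis}, I would test \eqref{weak-p-h} with $q_h^f=p_h^{f,n+1}\in Q_h$ and \eqref{weak-psi-h} with $\phi_h=\delta_t\psi_h^{n+1}\in Z_h$, and add the two identities. The discrete identity \eqref{discrete-identity} turns $\tilde a_2$ and $\tilde b_2$ into telescoping plus dissipative contributions, while the coercivity of $a_2$ controls $|p_h^{f,n+1}|_{1,\Omega}$. The cross term $\tfrac{2\alpha}{\lambda}\Vert p_h^{f,n+1}\Vert_{0,\Omega}\Vert\delta_t\psi_h^{n+1}\Vert_{0,\Omega}$ is split in the same $\lambda$-balanced way as in the continuous proof so that the $\Vert\delta_t\psi_h^{n+1}\Vert_{0,\Omega}^2$ part is absorbed by $\tfrac{\Delta t}{2\lambda}\Vert\delta_t\psi_h^{n+1}\Vert_{0,\Omega}^2$. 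The only term coupling back to the displacement, $-\int_\Omega\delta_t\psi_h^{m+1}\,\vdiv\bu_h^{s,m+1}$, is handled by summation by parts in time together with the initial conditions \eqref{eq:initial}, following \cite[Section 9]{ambartsumyan18}, after which Young's inequality with $\delta_3=\mu C_{k,1}$ and $\delta_4=\tfrac{\mu C_{k,1}\Delta t}{2}$ produces the displacement terms on the last line of \eqref{preliminar-stab-p-psi-4-dis}.

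Finally, \eqref{inf-sup-psi-dis} is where the discrete analysis genuinely departs from the continuous one, and I regard it as the main obstacle. Since $\psi_h^{n+1}\in Z_h$, the continuous inf-sup condition \eqref{inf-sup} cannot be invoked directly; instead I would use the discrete inf-sup stability of the MINI pair $(\bV_h,Z_h)$ recalled in Section~\ref{sec:FE}. Testing that discrete condition with $\phi_h=\psi_h^{n+1}$, eliminating $b_1(\bv_h^s,\psi_h^{n+1})$ through the first discrete equation \eqref{weak-u-h}, and bounding the numerator by the continuity of $a_1$ and of $F_{r_h^{n+1}}$ gives \eqref{inf-sup-psi-dis} with a constant depending only on the discrete inf-sup constant, $C_{k,2}$, $\mu$, $\tau$ and $\rho$, but independent of $\lambda$, $h$ and $\Delta t$. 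The delicate point, and the reason the scheme is locking-free, is that all three estimates must be free of inverse powers of $\lambda$: this is automatic in \eqref{stability-uh-final-dis} and in \eqref{inf-sup-psi-dis}, and in \eqref{preliminar-stab-p-psi-4-dis} it is secured precisely by the $\lambda$-balanced treatment of the cross term.
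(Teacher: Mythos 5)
Your proposal is correct and follows essentially the same route as the paper: testing \eqref{weak-u-h} with $\delta_t\bu_h^{s,n+1}$, testing \eqref{weak-p-h} and \eqref{weak-psi-h} with $p_h^{f,n+1}$ and $\delta_t\psi_h^{n+1}$ with the same $\lambda$-balanced splitting and summation by parts in time, and replacing the continuous inf-sup condition by the discrete one for the MINI pair to get the $\lambda$-robust bound on $\psi_h^{n+1}$. The paper's proof is exactly this argument, carried out by reference to the semidiscrete Lemmas, so no further comparison is needed.
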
}
\begin{proof}
We proceed similarly to the proof of Lemmas \ref{uncoupled-P} and \ref{uncoupled-ADR}. We focus first on the stability of  \eqref{weak-u-h}-\eqref{weak-psi-h}. Taking $\bv^s_h=\delta_t\bu_h^{s,n+1}$ in \eqref{weak-u-h}, using Cauchy-Schwarz inequality, applying Young's inequality with constants chosen conveniently, and then, summing over $n$ and multiplying by $\Delta t$, we readily get \eqref{stability-uh-final-dis},
where $C_1$ is a constant depending on $\mu, C_{k,1},C_{k,2}, \rho$, and $ \tau$.
Now, in equations \eqref{weak-p-h} and \eqref{weak-psi-h}, we take \cblue{$q^f_h=p^{f,n+1}_h$} and $\phi_h=\delta_t\psi^{n+1}_h$, respectively, to obtain
\begin{align}
\nonumber
&\frac{1}{2\lambda}\delta_t\Vert\psi_h^{n+1}\Vert^2_{0,\Omega} + \frac{\Delta t}{2\lambda}\Vert\delta_t\psi_h^{n+1}\Vert^2_{0,\Omega} + \frac{1}{2}\Big(c_0+\frac{\alpha^2}{\lambda}\Big)\Big(\delta_t\Vert p^{f,n+1}_h\Vert_{0,\Omega} + \Delta t \Vert \delta_t p^{f,n+1}_h\Vert_{0,\Omega}\Big)+\frac{\kappa_1}{\eta}|p^{f,n+1}_h|^2_{1,\Omega}\\
& \qquad\qquad \qquad \leq \frac{2\alpha}{\lambda}\Vert p^{f,n+1}_h\Vert_{0,\Omega}\Vert \delta_t \psi^{n+1}_h\Vert_{0,\Omega} + \norm{\ell^{n+1}}_{0,\Omega}\Vert p^{f,n+1}_h\Vert_{0,\Omega}- \int_{\Omega}\delta_t \psi^{n+1}_h \mathrm{div}\,\bu^{s,n+1}_h,
\label{preliminar-stab-p-psi-dis}
\end{align}
Thus, applying Young's inequality to the first and second term, and summation by parts to the last term, on the right-hand side of \eqref{preliminar-stab-p-psi-dis}, we obtain \eqref{preliminar-stab-p-psi-4-dis}.

On the other hand, as in Lemma \ref{uncoupled-P} we target an estimate independent of $\lambda$. For that reason we use the discrete version of the inf-sup condition \eqref{inf-sup}, which is satisfied by the finite element family \eqref{eq:FEspaces} \cite{girault79,boffi13}. Thus, taking $\phi_h=\psi_h^{n+1}$, using \eqref{weak-u-h} and the continuity of $a_1$, we obtain
\begin{align*}
\begin{split}
&\hat{\beta}\Vert\psi^{n+1}_h\Vert_{0,\Omega}\leq \sup_{\bv_h^s \in \mathbf{V}_h} \frac{b_1(\bv_h^s,\psi_h^{n+1})}{\|\bv_h^s\|_{1,\Omega}}  = \sup_{\bv_h^s \in \mathbf{V}_h} \frac{-a_1(\bu^{s,n+1}_h,\bv_h^{s})+F_{r_h^{n+1}}(\bv^{s}_h)}{\|\bv_h^s\|_{1,\Omega}}\\
&\qquad  \leq 2\mu C_{k,2} \Vert\boldsymbol{\varepsilon}(\bu_h^{n+1})\Vert_{0,\Omega}+\sqrt{C_{k,2}}\tau \Vert r^{n+1}_h\Vert_{0,\Omega}+\rho\Vert\bb^{n+1}\Vert_{0,\Omega},
\end{split}
\end{align*}
which can be written equivalently as \eqref{inf-sup-psi-dis},
with $C_2$ depending on $C_{k,1},C_{k,2}, \mu, \tau, \rho$ and the discrete inf-sup constant $\hat{\beta}$.
\end{proof}

\cblue{\begin{lemma}
	Assume that $(\bu_h^{s,n+1}, p_h^{f,n+1}, \psi_h^{n+1}, {w}^{n+1}_{1,h}, {w}^{n+1}_{2,h}) \in \mathbf{V}_h\times Q_h\times Z_h\times W_h\times W_h$ is solution of problem \eqref{weak-u-h}-\eqref{weak-w2-h}. Then
	\begin{align}\label{bound-wh-dis}
	\begin{split}
	&\frac{1}{2}\sum_{i=1}^{2}\Vert w_{i,h}^{n+1}\Vert^2_{0,\Omega}+ \frac{1}{2}\Delta t^2\sum_{i=1}^{2}\sum_{m=0}^{n} \Vert \delta_t w_{i,h}^{m+1}\Vert^2_{0,\Omega}+ \Delta t \sum_{i=1}^{2}\sum_{m=0}^{n}D_i^{\min}\Vert  \nabla w_{i,h}^{m+1}\Vert^2_{0,\Omega}\\
	& \qquad \leq \frac{1}{2}\sum_{i=1}^{2}\Vert w_{i,h}^{0}\Vert^2_{0,\Omega}+\frac{1}{2}(M_1+\Delta t )\sum_{i=1}^{2}\sum_{m=0}^{n}\Vert w_{i,h}^{m+1}\Vert^2_{0,\Omega}+ \frac{\Delta t}{2}\sum_{m=0}^{n}\Vert f_h^{m+1}\Vert^2_{0,\Omega}+ \frac{\Delta t}{2}\sum_{m=0}^{n}\Vert g_h^{m+1}\Vert^2_{0,\Omega}.
	\end{split}
	\end{align}
\end{lemma}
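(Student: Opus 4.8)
The plan is to reproduce, at the fully discrete level, the argument already used for the uncoupled ADR system in Lemma~\ref{uncoupled-ADR}: the discrete forms $\tilde a_4,a_4,c,J_{f_h}$ and their $w_2$-counterparts are structurally identical to their semidiscrete versions, and the finite element space $W_h\subset H^1(\Omega)$ is conforming, so every manipulation used there is still available. Concretely, I would take the discrete solutions themselves as test functions, setting $s_{1,h}=w_{1,h}^{n+1}$ in \eqref{weak-w1-h} and $s_{2,h}=w_{2,h}^{n+1}$ in \eqref{weak-w2-h}, and then estimate each term.

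For the first equation I would treat the terms in order. The temporal contribution $\tilde a_4(w_{1,h}^{n+1},w_{1,h}^{n+1})=\int_\Omega \delta_t w_{1,h}^{n+1}\,w_{1,h}^{n+1}$ is rewritten via the discrete identity \eqref{discrete-identity} as $\tfrac12\delta_t\|w_{1,h}^{n+1}\|^2_{0,\Omega}+\tfrac12\Delta t\|\delta_t w_{1,h}^{n+1}\|^2_{0,\Omega}$; the diffusion term is bounded below by coercivity \eqref{eq:coercivity}, yielding $D_1^{\min}|w_{1,h}^{n+1}|^2_{1,\Omega}$; the advective trilinear form is handled with the integration-by-parts identity \eqref{property-term}, i.e. $c(w_{1,h}^{n+1},w_{1,h}^{n+1},\bu_h^{s,n+1})=-\tfrac12\int_\Omega \vdiv(\delta_t\bu_h^{s,n+1})(w_{1,h}^{n+1})^2$; and the data term $J_{f_h^{n+1}}(w_{1,h}^{n+1})$ is controlled by Cauchy--Schwarz and Young's inequality. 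Collecting these bounds, multiplying by $\Delta t$, summing over $m=0,\dots,n$ (so the $\delta_t$ telescopes and produces the initial datum $\|w_{1,h}^0\|^2_{0,\Omega}$), repeating verbatim for $w_{2,h}$, and adding the two inequalities gives \eqref{bound-wh-dis} with $D^{\min}=\min\{D_1^{\min},D_2^{\min}\}$.

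The step that deserves care — and the one flagged in the Introduction as the genuine difficulty here — is the advective term. After invoking \eqref{property-term} (valid on the conforming spaces $W_h,\mathbf V_h$ exactly as in the continuous setting), I would bound $-\tfrac12\int_\Omega \vdiv(\delta_t\bu_h^{s,n+1})(w_{1,h}^{n+1})^2\le \tfrac12\|\delta_t\bu_h^{s,n+1}\|_{1,\infty,\Omega}\|w_{1,h}^{n+1}\|^2_{0,\Omega}$. The advecting velocity $\delta_t\bu_h^{s,n+1}=(\bu_h^{s,n+1}-\bu_h^{s,n})/\Delta t$ carries the extra factor $1/\Delta t$ coming from the backward Euler discretisation of the solid velocity; assuming the discrete displacement increments are uniformly bounded in $\bW^{1,\infty}(\Omega)$, say $\|\bu_h^{s,n+1}-\bu_h^{s,n}\|_{1,\infty,\Omega}\le M_1$, this produces $\tfrac{M_1}{2\Delta t}\|w_{1,h}^{n+1}\|^2_{0,\Omega}$ per step. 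The crucial point is that this $1/\Delta t$ is cancelled exactly by the factor $\Delta t$ introduced when summing in time, leaving the reaction-type term $\tfrac12(M_1+\Delta t)\sum_m\|w_{i,h}^{m+1}\|^2_{0,\Omega}$ on the right of \eqref{bound-wh-dis}; since all remaining estimates are mesh-independent, $M_1$ and hence the whole bound are independent of $h$ and $\Delta t$. I would finally remark that \eqref{bound-wh-dis} is only a \emph{preliminary} bound, still carrying $\|w_{i,h}^{m+1}\|^2_{0,\Omega}$ on its right-hand side; a subsequent discrete Gronwall step, exactly as in Lemma~\ref{uncoupled-ADR}, is what would turn it into a genuine a priori stability estimate.
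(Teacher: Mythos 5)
Your proposal is correct and follows essentially the same route as the paper's own proof: test with $w_{i,h}^{n+1}$, use the discrete identity \eqref{discrete-identity} and coercivity, handle the advective term via \eqref{property-term} with the bound $\Vert \bu^{s,n+1}_{h}-\bu^{s,n}_{h}\Vert_{1,\infty,\Omega}\leq M_1$ (which the paper justifies by noting the discrete functions are piecewise polynomials on a bounded domain), and then apply Young's inequality, multiply by $\Delta t$ and sum. Your closing remarks — that the $1/\Delta t$ from the discrete solid velocity is absorbed by the factor $\Delta t$ from the time sum, and that \eqref{bound-wh-dis} is only a preliminary bound to be closed later by a discrete Gronwall argument — accurately reflect how the paper uses this lemma.
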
}
	\begin{proof}
		Notice that for the ADR problem \eqref{weak-w1-h}-\eqref{weak-w2-h}, by
		taking $s_{1,h}=w^{n+1}_{1,h}$ in  \eqref{weak-w1-h}, we get
		\begin{align*}
		\int_\Omega  \delta_t  w_{1,h}^{n+1}w_{1,h}^{n+1} + \int_\Omega D_1(\bx)\nabla w_{1,h}^{n+1}\cdot \nabla w_{1,h}^{n+1}+  \int_\Omega  (\delta_t\bu_h^{s,n+1}\cdot\nabla w_{1,h}^{n+1})w_{1,h}^{n+1}=\int_{\Omega}f_h^{n+1}w_{1,h}^{n+1},
		\end{align*}
		and then, applying \eqref{property-term} and Cauchy-Schwarz inequality, we deduce the estimate
		\begin{align*}
		\begin{split}
		&\frac{1}{2}\delta_t\Vert w_{1,h}^{n+1}\Vert^2_{0,\Omega}+ \frac{1}{2}\Delta t \Vert \delta_t w_{1,h}^{n+1}\Vert^2_{0,\Omega}+  D_1^{\min}\Vert \nabla w_{1,h}^{n+1}\Vert^2_{0,\Omega}\\
		&\qquad\qquad \qquad\qquad\qquad\leq \frac{1}{2}\Vert\delta_t \bu_h^{s,n+1}\Vert_{1,\infty,\Omega}\Vert w_{1,h}^{n+1}\Vert^2_{0,\Omega} + \Vert f_h^{n+1}\Vert_{0,\Omega}\Vert w_{1,h}^{n+1}\Vert_{0,\Omega}.
		\end{split}
		\end{align*}
		Since $\Omega$ is a bounded domain and the elements of $\bV_h$ are piecewise polynomials, we know that $\Vert \bu^{s,n+1}_{h}-\bu^{s,n}_{h}\Vert_{1,\infty,\Omega}< +\infty$ for each $\bu^{n+1}_{h},\bu^{n}_{h}\in \bV_h$ (see, e.g., \cite{ggr-2018}), and then, without loss of generality, we may assume that $\Vert \bu^{s,n+1}_{h}-\bu^{s,n}_{h}\Vert_{1,\infty,\Omega}\leq M_1$ for some $M_1\in \mathbb{R}$. Thus, applying Young's inequality, summing over $n$ and multiplying by $\Delta t$, we obtain the following result
		\begin{align}\label{bound-w1h-dis}
		\begin{split}
		&\frac{1}{2}\Vert w_{1,h}^{n+1}\Vert^2_{0,\Omega}+ \frac{1}{2}\Delta t^2\sum_{m=0}^{n} \Vert \delta_t w_{1,h}^{m+1}\Vert^2_{0,\Omega}+  D_1^{\min}\Delta t \sum_{m=0}^{n}\Vert \nabla w_{1,h}^{m+1}\Vert^2_{0,\Omega} \\
		&\qquad \qquad \leq \frac{1}{2}\Vert w_{1,h}^{0}\Vert^2_{0,\Omega}+\frac{1}{2}(M_1+\Delta t )\sum_{m=0}^{n}\Vert w_{1,h}^{m+1}\Vert^2_{0,\Omega}+ \frac{\Delta t}{2}\sum_{m=0}^{n}\Vert f_h^{m+1}\Vert^2_{0,\Omega}.
		\end{split}
		\end{align}
		Moreover, we realise that an estimate for $\Vert w_{2,h}^{n+1}\Vert_{0,\Omega}$ stays exactly as above, which is
		\begin{align}\label{bound-w2h-dis}
		\begin{split}
		&\frac{1}{2}\Vert w_{2,h}^{n+1}\Vert^2_{0,\Omega}+ \frac{1}{2}\Delta t^2\sum_{m=0}^{n} \Vert \delta_t w_{2,h}^{m+1}\Vert^2_{0,\Omega}+ D_2^{\min}\Delta t \sum_{m=0}^{n}\Vert  \nabla w_{2,h}^{m+1}\Vert^2_{0,\Omega}\\
		&\qquad \qquad \leq \frac{1}{2}\Vert w_{2,h}^{0}\Vert^2_{0,\Omega}+\frac{1}{2}(M_1+\Delta t )\sum_{m=0}^{n}\Vert w_{2,h}^{m+1}\Vert^2_{0,\Omega}+ \frac{\Delta t}{2}\sum_{m=0}^{n}\Vert g_h^{m+1}\Vert^2_{0,\Omega},
		\end{split}
		\end{align}
		therefore completing the proof.
	\end{proof}
	
\cblue{Finally, we can establish the stability result for the discrete solution.}
\cblue{	\begin{lemma}
		Assume that $(\bu_h^{s,n+1}, p_h^{f,n+1}, \psi_h^{n+1}, {w}^{n+1}_{1,h}, {w}^{n+1}_{2,h}) \in \mathbf{V}_h\times Q_h\times Z_h\times W_h\times W_h$ is solution of problem \eqref{weak-u-h}-\eqref{weak-w2-h}. Then, there exists $C>0$ independent of $\lambda, h$, and $\Delta t$, such that
		\begin{align}\label{stability-fully-discrete}
		\begin{split}
		&\Vert \bu_h^{s,n+1}\Vert_{1,\Omega} + \sqrt{c_0}\Vert p^{f,n+1}_h\Vert_{0,\Omega} + \Vert \psi^{n+1}_h\Vert_{0,\Omega} + \Vert p^{f}_h\Vert_{\ell^2(H^1(\Omega))}+ \Vert w_{1,h}^{n+1}\Vert_{0,\Omega} + \Vert w^{n+1}_{2,h}\Vert_{0,\Omega}\\
		&\leq C \sqrt{\exp} \Big\{n\Delta t+ \Vert\bu_h^{s,0}\Vert_{1,\Omega} + \Vert p_h^{f,0}\Vert_{0,\Omega}+ \Vert \psi_h^{0}\Vert_{0,\Omega}+\Vert w_{1,h}^{0}\Vert_{0,\Omega}\\
		&\qquad \qquad \qquad +\Vert w_{2,h}^{0}\Vert_{0,\Omega}+  \sum_{m=0}^{n+1}\Vert\bb^{m+1}\Vert_{0,\Omega} + \norm{\ell}_{\ell^2(L^2(\Omega))} \Big\}.
		\end{split}
		\end{align}
	\end{lemma}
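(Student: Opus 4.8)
The plan is to combine the three poroelastic bounds \eqref{preliminar-stab-p-psi-4-dis}--\eqref{inf-sup-psi-dis} with the ADR bound \eqref{bound-wh-dis}, mirroring verbatim the strategy used in the semidiscrete continuous-dependence lemma that assembled \eqref{stability-result-final} and \eqref{bound-w1h-w2h-final}, and then to close the resulting coupled inequality with the discrete Gronwall lemma.

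First I would assemble a $\lambda$-independent estimate for the Biot block. Adding \eqref{stability-uh-final-dis} and \eqref{preliminar-stab-p-psi-4-dis}, the terms $\tfrac{1}{2\lambda}\Vert\psi_h^{n+1}\Vert_{0,\Omega}^2$ and $\tfrac{1}{\mu C_{k,1}}\Vert\psi_h^{n+1}\Vert_{0,\Omega}^2$ appearing on both sides must be reconciled; the key point is to invoke the discrete inf-sup estimate \eqref{inf-sup-psi-dis} to replace every occurrence of $\Vert\psi_h^{n+1}\Vert_{0,\Omega}^2$ on the right-hand side by $C_2\{\Vert\bu_h^{s,n+1}\Vert_{1,\Omega}^2 + \Vert r_h^{n+1}\Vert_{0,\Omega}^2 + \Vert\bb^{n+1}\Vert_{0,\Omega}^2\}$, after which the displacement contribution is absorbed into the left-hand side. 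This route, rather than the $1/\lambda$ coercivity of $a_3$, is exactly what keeps every constant independent of $\lambda$ as $\lambda\to\infty$ and yields the discrete analogue of \eqref{stability-result}: a bound for $\Vert\bu_h^{s,n+1}\Vert_{1,\Omega}^2 + c_0\Vert p_h^{f,n+1}\Vert_{0,\Omega}^2 + \Vert\psi_h^{n+1}\Vert_{0,\Omega}^2 + \Delta t\sum_m\Vert p_h^{f,m+1}\Vert_{1,\Omega}^2$ controlled by initial data, the load and source, and the summed quantities $\sum_m(\Vert\psi_h^{m+1}\Vert_{0,\Omega}^2 + \Vert p_h^{f,m+1}\Vert_{0,\Omega}^2 + \Vert r_h^{m+1}\Vert_{0,\Omega}^2)$.

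Next I would eliminate the chemistry-to-mechanics feedback and the mechanics-to-chemistry forcing simultaneously. By the first bound in \eqref{property-of-r} one has $\Vert r_h^{m+1}\Vert_{0,\Omega}^2 \le 2(\Vert w_{1,h}^{m+1}\Vert_{0,\Omega}^2 + \Vert w_{2,h}^{m+1}\Vert_{0,\Omega}^2)$, recasting every $r_h$ term in concentration norms; and in \eqref{bound-wh-dis} the forcing norms $\Vert f_h^{m+1}\Vert_{0,\Omega}^2$, $\Vert g_h^{m+1}\Vert_{0,\Omega}^2$ are controlled through the growth condition in \eqref{eq:reaction-assumption} by $C(1 + \Vert w_{1,h}^{m+1}\Vert_{0,\Omega}^2 + \Vert w_{2,h}^{m+1}\Vert_{0,\Omega}^2)$, the constant contribution summed against $\Delta t$ producing the additive $n\Delta t$ term visible on the right of \eqref{stability-fully-discrete}. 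Adding the so-modified Biot and ADR inequalities gives a single preliminary estimate whose left-hand side is precisely the squared norm of \eqref{stability-fully-discrete} and whose right-hand side is data plus a sum over $m$ of the same squared unknowns, i.e. exactly the structure required by discrete Gronwall.

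Finally I would apply the discrete Gronwall inequality, as in the proofs of Lemmas \ref{uncoupled-P} and \ref{uncoupled-ADR}, to absorb the summed unknowns, and take square roots to reach \eqref{stability-fully-discrete}. I expect the main obstacle to be the careful bookkeeping of the advective coupling: the trilinear term $c(\cdot,\cdot,\bu_h^{s,n+1})$ carries a $1/\Delta t$ coming from the backward-Euler solid velocity $\delta_t\bu_h^{s,n+1}$, which is why \eqref{bound-wh-dis} already relies on the piecewise-polynomial bound $\Vert\bu_h^{s,n+1}-\bu_h^{s,n}\Vert_{1,\infty,\Omega}\le M_1$. I must check that this dependence, together with the factor $M_1+\Delta t$ in \eqref{bound-wh-dis}, is subsumed into the Gronwall constant without spoiling the $\Delta t$- and $h$-independence claimed in the statement, so that the estimate remains genuinely locking-free and robust in the mesh and time step.
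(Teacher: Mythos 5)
Your proposal is correct and follows essentially the same route as the paper: the authors likewise obtain \eqref{stability-fully-discrete} by adding \eqref{stability-uh-final-dis}, \eqref{preliminar-stab-p-psi-4-dis}, \eqref{inf-sup-psi-dis} and \eqref{bound-wh-dis}, invoking the growth conditions on $f_h,g_h$ and the bound \eqref{property-of-r} for $r$, and closing with the discrete Gronwall inequality. Your additional remarks on absorbing $\Vert\psi_h^{n+1}\Vert_{0,\Omega}$ via the discrete inf-sup bound (rather than the $1/\lambda$ coercivity of $a_3$) and on the $M_1$ bound controlling the $1/\Delta t$ from the advective term simply make explicit what the paper's one-line proof leaves implicit.
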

	\begin{proof}
		The result \eqref{stability-fully-discrete} follows from the growth condition on $f_h$ and $g_h$, adding \eqref{stability-uh-final-dis}, \eqref{preliminar-stab-p-psi-4-dis}, \eqref{inf-sup-psi-dis}, and \eqref{bound-wh-dis}, recalling the bound for $r$, and applying the discrete Gronwall's inequality.
	\end{proof}}
\begin{remark}
			The solvability analysis of \eqref{weak-u-h}-\eqref{weak-w2-h} can be established similarly to the continuous case. More precisely, as in Section \ref{fixed-point-operator} we need to define a fixed-point operator, whose well-definiteness will depend upon the solvability of each uncoupled problem. For the discrete poroelasticity system we can adapt the  analysis from \cite[Section 3]{oyarzua16}, whereas for the approximate ADR equations we can apply classical techniques for discrete quasi-linear problems \cite{quarteroni94}. Next, we need to prove the continuity of the operator going from $[W_h]^2$ into itself, which follows as a consequence of the estimate \eqref{stability-fully-discrete} in combination with the ideas employed in \cite[Section 5.3]{anaya18}. Finally, the result follows from an application of the well-known Brouwer fixed-point theorem.
	\end{remark}
	
\begin{remark}	\cblue{
We stress that the all the arguments and techniques used in proving the stability of the discrete-in-time problem, may not be directly applicable for ensuring the stability of the proposed fully discrete scheme, as the discrete variables  involved in the formulation may  not have enough regularity as demanded in the semi-discrete analysis. Moreover, the ideas developed in illustrating the stability of a fully discrete scheme will be repeatedly used in the establishment of error estimates.}
\end{remark}

\cblue{\section{Error estimates}\label{sec:error}
In order to see the rate of convergence of the proposed fully discrete scheme, we will derive the error estimates in suitable norms for each of the variables that appear in the formulation. For establishing the error estimates, we will be utilising the well-known techniques/arguments used for time-dependent problems and imitating the steps used in showing stability. Therefore, we would like to provide a brief sketch of the proof by citing the appropriate references for more details. First, we define the following projection operator
  $$\mathbf{A}_h:=(A_h^{\bu}, A_h^p, A_h^{\psi}, A_h^{w_1}, A_h^{w_2}),$$
  where   ($A_h^{\bu}, A_h^{\psi}$) and  $A_h^p, A_h^{w_1}, A_h^{w_2}$ are standard Stokes operator and elliptic projections respectively,  defined as follows, $\forall  \bv_h \in \bV_h, \phi_h \in Z_h, \forall q_h \in Q_h$ and $\forall w_i \in W_h, i=1,2$,
  \begin{align}
a_1(A_h^u \bu, \bv_h) + b_1(\bv_h, A_h^{\psi} \psi) & = a_1(\bu, \bv_h)+ b_1(\bv_h, \psi);\quad  b_1(A_h^u \bu, \phi_h)  = b_1(\bu, \phi_h); \label{eq:proj-u-psi}\\
a_2(A_h^p p, q_h) & =a_2(p,q_h); \quad
(\nabla A_h^{w_i} w_i, \nabla s_{i,h}) = (\nabla w_i, \nabla s_{i,h}). \label{eq:proj-p-w}
\end{align}
  These operators satisfy the following estimates (see, for instance, \cite{girault79,quarteroni94}):
	\begin{align}
	\| \bu - A_h^{\bu} \bu \|_{0,\Omega} + h ( |\bu - A_h^{\bu} \bu|_{1,\Omega} + \| \psi - A_h^{\psi} \psi \|_{0,\Omega} )  & \le C h^2, \label{est-Au-Apsi} \\
	\| p - A_h^{p} p \|_{0,\Omega} + h |p - A_h^{p} p|_{1,\Omega} & \le C h^2, \label{est-Ap}\\
	\| w_i - A_h^{w_i} w_i\|_{0,\Omega} + h |w_i - A_h^{w_i} w_i|_{1,\Omega} & \le C h^2, \quad i=1,2. \label{est-Aw}
	\end{align}
\begin{theorem}\label{theo:error}
		Let  $(\bu(t), p(t), \psi(t), w_1(t), w_2(t))$  and $(\bu_h^{n+1}, p_h^{n+1}, \psi_h^{n+1}, w_{1,h}^{n+1}, w_{2,h}^{n+1})$ be the unique solutions to the systems \eqref{eq:weak} and \eqref{weak-u-h}-\eqref{weak-w2-h}, respectively. Then the following estimate holds, with constant $C$ independent of $h$ and $\Delta t$,
	\begin{align}
	\Vert \bu^{n+1} -\bu_h^{n+1} \Vert_{1,\Omega}^2 + \Vert \psi^{n+1} -\psi_h^{n+1} \Vert_{0,\Omega}^2  + (\Delta t) \sum_{k=0}^{n} | p^{k+1} -p_h^{k+1} |_{1,\Omega}^2  & \nonumber \\
	+ (\Delta t) \sum_{k=0}^{n} \Big( | w_1^{k+1} -w_{1,h}^{k+1} |_{1,\Omega}^2 + |  w_2^{k+1} -w_{2,h}^{k+1} |_{1,\Omega}^2 \Big)& \le C (h^2 + \Delta t^2). \label{est-fully-discrete}
	\end{align}
\end{theorem}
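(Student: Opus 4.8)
The plan is to follow the standard parabolic/mixed finite element route: decompose each error into a projection (approximation) part plus a discrete part, bound the former by the a priori estimates \eqref{est-Au-Apsi}--\eqref{est-Aw}, and estimate the latter by mimicking the stability argument leading to \eqref{stability-fully-discrete}. Concretely, for each field $X\in\{\bu,p,\psi,w_1,w_2\}$ I would write $X^{n+1}-X_h^{n+1}=\rho_X^{n+1}+\xi_X^{n+1}$, where $\rho_X^{n+1}:=X^{n+1}-A_h^{X}X^{n+1}$ is the projection error (with $A_h^{X}$ the corresponding projection from \eqref{eq:proj-u-psi}--\eqref{eq:proj-p-w}) and $\xi_X^{n+1}:=A_h^{X}X^{n+1}-X_h^{n+1}$ is the discrete part, which lives in the finite element space and may therefore be used as a test function. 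The terms $\rho_X^{n+1}$ are controlled to order $O(h^2)$ in $L^2$ and $O(h)$ in $H^1$ by \eqref{est-Au-Apsi}--\eqref{est-Aw}, so by the triangle inequality it suffices to bound the discrete parts $\xi_X^{n+1}$.

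Next I would subtract the fully discrete scheme \eqref{weak-u-h}--\eqref{weak-w2-h} from the continuous weak form \eqref{eq:weak} taken at $t=t_{n+1}$, replacing each $\partial_t X(t_{n+1})$ by $\delta_t X^{n+1}$ at the cost of a consistency remainder $\sigma_X^{n+1}:=\partial_t X(t_{n+1})-\delta_t X^{n+1}$; assuming $\partial_{tt}X\in L^2$, a Taylor expansion gives $\|\sigma_X^{n+1}\|_{0,\Omega}\le C\Delta t$, so these terms contribute $O(\Delta t^2)$ after squaring and summing with the extra $\Delta t$ weight. The crucial simplification is that the defining relations \eqref{eq:proj-u-psi}--\eqref{eq:proj-p-w} act as Galerkin orthogonalities, namely $a_1(\rho_{\bu}^{n+1},\bv_h)+b_1(\bv_h,\rho_\psi^{n+1})=0$, $b_1(\rho_{\bu}^{n+1},\phi_h)=0$, $a_2(\rho_p^{n+1},q_h)=0$ and $(\nabla\rho_{w_i}^{n+1},\nabla s_{i,h})=0$, so the leading elliptic/diffusive parts of the projection error vanish and $\rho_X^{n+1}$ enters only through the zeroth-order (time-derivative, coupling, advection and reaction) terms, each carrying an $L^2$ factor and thus $O(h^2)$. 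For the poroelastic block I would then test the $\xi$-error equations with $\delta_t\xi_{\bu}^{n+1}$, $\xi_p^{n+1}$ and $\delta_t\xi_\psi^{n+1}$, exactly as in the stability lemmas, use the discrete identity \eqref{discrete-identity}, the coercivity bounds \eqref{eq:coercivity}, summation by parts on $\int_\Omega\delta_t\xi_\psi^{n+1}\,\vdiv\xi_{\bu}^{n+1}$, and recover $\lambda$-independent control of $\|\xi_\psi^{n+1}\|_{0,\Omega}$ via the discrete inf-sup condition \eqref{inf-sup}. The only coupled contribution here is the active-stress forcing $\tau F_{r^{n+1}-r_h^{n+1}}$, which by the Lipschitz estimate \eqref{property-of-r} is bounded by $\|w_1^{n+1}-w_{1,h}^{n+1}\|_{0,\Omega}+\|w_2^{n+1}-w_{2,h}^{n+1}\|_{0,\Omega}$ and is moved to the right-hand side.

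The ADR block is where the main difficulty lies. Testing the $\xi$-error equations for \eqref{weak-w1-h}--\eqref{weak-w2-h} with $\xi_{w_1}^{n+1},\xi_{w_2}^{n+1}$, the reaction differences are Lipschitz by \eqref{eq:reaction-assumption} and yield harmless $\|w_i^{n+1}-w_{i,h}^{n+1}\|_{0,\Omega}$ contributions. The delicate term is the trilinear advection difference $c(w_1^{n+1},\xi_{w_1}^{n+1},\bu^{s,n+1})-c(w_{1,h}^{n+1},\xi_{w_1}^{n+1},\bu_h^{s,n+1})$: adding and subtracting splits it into $\int_\Omega(\delta_t\bu_h^{s,n+1}\cdot\nabla(\rho_{w_1}^{n+1}+\xi_{w_1}^{n+1}))\xi_{w_1}^{n+1}$ and $\int_\Omega(\delta_t(\rho_{\bu}^{n+1}+\xi_{\bu}^{n+1})\cdot\nabla w_1^{n+1})\xi_{w_1}^{n+1}$. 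The first is controlled using the antisymmetry identity \eqref{property-term} for the self-advective part and the $\mathbf{W}^{1,\infty}$-bound on $\delta_t\bu_h^{s}$ already exploited in Lemma~\ref{uncoupled-ADR}. The genuinely hard part, as flagged in the Introduction, is the second term: the backward-Euler difference of the solid displacement error $\delta_t(\bu^{s,n+1}-\bu_h^{s,n+1})$ carries a factor $1/\Delta t$, so that the natural bound produces a $\Delta t\sum_m\|\delta_t\xi_{\bu}^{m+1}\|_{1,\Omega}^2$ contribution, whereas the poroelastic estimate only retains the $\Delta t^2$-weighted quantity $\Delta t^2\sum_m\|\delta_t\xi_{\bu}^{m+1}\|_{1,\Omega}^2$ on its left-hand side through \eqref{discrete-identity}. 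Matching these two weights is the non-routine, cross-block ingredient: I would absorb the $\delta_t\rho_{\bu}$ piece at order $O(h^2)$ (writing $\delta_t\rho_{\bu}^{n+1}=(I-A_h^{\bu})\delta_t\bu^{s,n+1}$, where the difference quotient is a bounded average of $\partial_t\bu^{s}$), and treat the $\delta_t\xi_{\bu}$ piece with a Young constant scaled by $\Delta t$ so that it can be absorbed against the retained $\Delta t^2$-weighted term, leaning on the $\mathbf{W}^{1,\infty}$ regularity of $\bu^{s}$ and $w_i$.

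Finally I would add the poroelastic and ADR estimates, absorb the active-stress coupling into the ADR side and the advective coupling into the Biot side by Young's inequality, insert the $O(h^2)+O(\Delta t^2)$ projection and consistency bounds, and apply the discrete Gronwall inequality to obtain a bound on all the $\xi_X$. A concluding triangle inequality converts these into estimates for the full errors $X^{n+1}-X_h^{n+1}$, the projection contributions adding only $O(h^2)$, which yields the asserted rate \eqref{est-fully-discrete}. I expect the weight-matching of the $1/\Delta t$ advective term against the $\Delta t^2$-controlled velocity error to be the principal obstacle, requiring the solid-displacement and concentration estimates to be closed simultaneously rather than by naive superposition.
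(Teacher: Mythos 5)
Your proposal is correct and follows essentially the same route as the paper: the projection/discrete-error splitting via $\mathbf{A}_h$, Galerkin orthogonality from \eqref{eq:proj-u-psi}--\eqref{eq:proj-p-w}, the test functions $\delta_t\eta_{\bu}^{n+1},\eta_p^{n+1}$ (with the $\psi$-equation differenced in time), Taylor remainders for the consistency error, the add-and-subtract splitting of the trilinear advective difference, and a concluding Gronwall plus triangle inequality. The only cosmetic deviation is in the $1/\Delta t$ advective term: you absorb $\delta_t\eta_{\bu}$ into the $\Delta t^2$-weighted velocity-increment term via a scaled Young inequality, whereas the paper simply bounds $\Delta t\Vert\delta_t\eta_{\bu}^{k+1}\Vert_{0,\Omega}$ by $\Vert\eta_{\bu}^{k+1}\Vert_{0,\Omega}+\Vert\eta_{\bu}^{k}\Vert_{0,\Omega}$ and telescopes into the Gronwall sum --- both work.
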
	
\begin{proof}
First we decompose the error as follows for each $t$ and $i=1,2$:
\begin{align*}
\xi-\xi_h & =\xi-\mathbf{A}_h+\mathbf{A}_h-\xi_h,\\
& =(\underbrace{\bu-A_h^{\bu}}_{:=\rho_{\bu}}+\underbrace{A_h^{\bu}-\bu_h}_{:=\eta_{\bu}}, \underbrace{p-A_h^{p}}_{:\rho_p}
+\underbrace{A_h^{p}-p_h}_{:=\eta_p}, \underbrace{\psi-A_h^{\psi}}_{:\rho_{\psi}}+\underbrace{A_h^{\psi}-\psi_h}_{:=\eta_{\psi}}, \underbrace{w_i-A_h^{w_i}}_{:=\rho_{w_i}}
+\underbrace{A_h^{w_i}-w_{i,h}}_{:=\eta_{w_i}}),\end{align*}
where $\xi=(\bu, p, \psi, w_1,  w_2)$ and $\xi_h=(\bu_h, p_h, \psi_h, w_{1,h}, w_{2,h})$.
On subtracting \eqref{weak-u-h}-\eqref{weak-w2-h} from \eqref{eq:weak}, choosing $\bv_h = \delta_{t} \eta_{\bu}^{n+1}, \phi_h = \eta_{\psi}^{n+1}, q_h = \eta_p^{n+1}$, $s_{1,h}= \eta_{w_1}^{n+1}$ and $s_{2,h}= \eta_{w_2}^{n+1}$ and invoking \eqref{eq:proj-u-psi} and \eqref{eq:proj-p-w}, enable us to write the following error equations
\begin{align}
	a_1(\eta_{\bu}^{n+1}, \delta_{t} \eta_{\bu}^{n+1}) + b_1(\delta_{t} \eta_{\bu}^{n+1}, \eta_{\psi}^{n+1}) & = (F_{r^{n+1}}- F_{r_h^{n+1}})(\delta_{t} \eta_{\bu}^{n+1}), \label{eq:err-u}\\
	\tilde{a}_2( \eta_p^{n+1}, \eta_p^{n+1}) + a_2(\eta_p^{n+1}, \eta_p^{n+1}) - \tilde{b}_2(\eta_p^{n+1}, \eta_{\psi}^{n+1}) &
	= - \tilde{a}_2(\rho^{n+1}_p, \eta_p^{n+1}) + \tilde{b}_2(\eta_p^{n+1}, \rho_{\psi}^{n+1}) \nonumber \\
	& \quad - \Big(c_0 + \frac{\alpha^2}{\lambda} \Big) (\partial_t p(\cdot, t_{n+1}) - \delta_{t} p^{n+1}, \eta_p^{n+1})  \label{eq:err-p} \\
	& \quad - \Big( \frac{\alpha}{\lambda} \Big) (\eta_p^{n+1}, \partial_t \psi - \delta_t \psi^{n+1}), \nonumber \\
	b_1(\eta_{\bu}^{n+1}, \eta_{\psi}^{n+1}) +b_2(\eta_p^{n+1}, \eta_{\psi}^{n+1}) -a_3(\eta_{\psi}^{n+1}, \eta_{\psi}^{n+1}) & =  - b_2(\rho_p^{n+1}, \eta_{\psi}^{n+1}) + a_3(\rho_{\psi}^{n+1}, \eta_{\psi}^{n+1}), \label{eq:err-psi}\\
	\tilde{a}_4(\eta_{w_1}^{n+1}, \eta_{w_1}^{n+1}) + a_4(\eta_{w_1}^{n+1}, \eta_{w_1}^{n+1}) & = J_{f^{n+1} -  f_h^{n+1}}(\eta_{w_1}^{n+1}) - \tilde{a}_4(\rho_{w_1}^{n+1}, \eta_{w_1}^{n+1})  \nonumber \\
	& \quad - (\partial_t w_1(\cdot, t_{n+1}) - \delta_{t} w_1^{n+1}, \eta_{w_1}^{n+1}) \label{eq:err-w1} \\
	& \quad - \Big(c(w_1^{n+1}, \eta_{w_1}^{n+1}, \bu^{s,n+1}) - c(w_{1,h}^{n+1}, \eta_{w_1}^{n+1}, \bu_h^{s,n+1}) \Big), \nonumber \\
	\tilde{a}_5(\eta_{w_2}^{n+1}, \eta_{w_2}^{n+1}) + a_5(\eta_{w_2}^{n+1}, \eta_{w_2}^{n+1}) & = J_{g^{n+1} -  g_h^{n+1}}(\eta_{w_2}^{n+1}) - \tilde{a}_5(\rho_{w_2}^{n+1}, \eta_{w_2}^{n+1}) \nonumber \\
	& \quad - (\partial_t w_2(\cdot, t_{n+1}) - \delta_{t} w_2^{n+1}, \eta_{w_2}^{n+1})
	\label{eq:err-w2}  \\
	& \quad - \Big(c(w_2^{n+1}, \eta_{w_2}^{n+1}, \bu^{s,n+1}) - c(w_{2,h}^{n+1}, \eta_{w_2}^{n+1}, \bu_h^{s,n+1}) \Big). \nonumber
	\end{align}
We then proceed to rewrite equation \eqref{eq:err-psi} for $n+1$ and $n$ and then subtracting these equations (as done in, e.g., \cite[Lemma 4.1]{lee17}). Then we combine equations \eqref{eq:err-u}-\eqref{eq:err-psi} (see also \cite{yi17}), and we then multiply by $\Delta t$ the resulting  expression together with the error equations \eqref{eq:err-w1}-\eqref{eq:err-w2}.
 Summing the result over each $n$ and proceeding similarly as in the proofs of Lemmas \ref{uncoupled-P} and \ref{uncoupled-ADR}, we arrive at
\begin{align}
&	\nonumber	\mu C_{k,1} \| \eta_{\bu}^{n+1} \|_{1,\Omega}^2 + \| \eta_{\psi}^{n+1} \|_{0,\Omega}^2 + c_0 \| \eta_p^{n+1} \|_{0,\Omega}^2 + \frac{\kappa_1}{\eta} (\Delta t) \sum_{k=0}^{n} |\eta_p^{k+1}|_{1,\Omega}^2 \\
&\nonumber \ \le \mu C_{k,1} \| \eta_{\bu}^0 \|_{1,\Omega}^2 + \Big( c_0 + \frac{\alpha^2}{\lambda} \Big) \| \eta_p^0 \|_{0,\Omega}^2+ \frac{1}{\lambda} \sum_{k=0}^{n} \| \eta_{\psi}^k \|_{0,\Omega}^2  \\
&\quad 	\nonumber	+
	\Delta t \sum_{k=0}^n \Big( (F_{r^{k+1}}- F_{r_h^{k+1}})(\delta_{t} \eta_{\bu}^{k+1}) - a_1(\rho_{\bu}^{k+1}, \delta_{t} \eta_{\bu}^{k+1}) - b_1(\delta_{t} \eta_{\bu}^{k+1}, \rho_{\psi}^{k+1})- \tilde{a}_2(\rho^{k+1}_p, \eta_p^{k+1}) \\
&\qquad \qquad \ \quad
	- a_2(\rho_p^{k+1}, \eta_p^{k+1}) + \tilde{b}_2(\eta_p^{k+1}, \rho_{\psi}^{k+1}) +b_1( \delta_{t} \rho_{\bu}^{k+1}, \eta_{\psi}^{k+1}) - b_2( \delta_{t} \rho_p^{k+1}, \eta_{\psi}^{k+1})\label{ineq:u-p}\\
&\qquad \qquad \ \quad	 + a_3(\delta_{t} \rho_{\psi}^{k+1}, \eta_{\psi}^{k+1}) - \Big(c_0 + \frac{\alpha^2}{\lambda} \Big) (\partial_t p(\cdot, t_{k+1}) - \delta_{t} p^{k+1}, \eta_p^{k+1})\nonumber	 \\
&\qquad \qquad \ \quad		\nonumber	- \Big( \frac{\alpha}{\lambda} \Big) (\eta_p^{k+1}, \partial_t \psi - \delta_t \psi^{k+1}) 	- (\partial_t w_1(\cdot, t_{k+1}) - \delta_{t} w_1^{k+1}, \eta_{w_1}^{k+1}) \\
&\qquad \qquad \ \quad		\nonumber- (\partial_t w_2(\cdot, t_{k+1}) - \delta_{t} w_2^{k+1}, \eta_{w_2}^{k+1}) \Big),
	\end{align}
	and
\begin{align}
&	\nonumber	\|\eta_{w_i}^{n+1} \|_{0,\Omega}^2 + D_i^{\min} (\Delta t) \sum_{k=0}^n |\eta_{w_i}^{k+1}|_{1,\Omega}^2 \\
& \quad \le \|\eta_{w_i}^{0} \|_{0,\Omega}^2 + |\eta_{w_i}^{0}|_{1,\Omega}^2 + \Delta t \sum_{k=0}^n \Big( J_{f^{k+1} -  f_h^{k+1}}(\eta_{w_i}^{k+1}) - \tilde{a}_4(\rho_{w_i}^{k+1}, \eta_{w_i}^{k+1}) - c(w_i^{k+1}, \eta_{w_i}^{n+1}, \bu^{s,k+1})  \label{ineq-w1}\\
	& \qquad \qquad\qquad \qquad\qquad \qquad \quad - c(w_{i,h}^{k+1}, \eta_{w_i}^{n+1}, \bu_h^{s,k+1}) - (\partial_t w_i(\cdot, t_{k+1}) - \delta_{t} w_i^{k+1}, \eta_{w_i}^{k+1}) \Big). \nonumber
	\end{align}
In view of  \eqref{property-of-r}, \eqref{eq:reaction-assumption}, Cauchy-Schwarz, Poincare and Young's inequalities, we obtain the following bounds for the nonlinear terms appearing in \eqref{ineq:u-p}, \eqref{ineq-w1}
\begin{align*}
\Delta t \sum_{k=0}^n (F_{r^{k+1}}- F_{r_h^{k+1}})(\delta_{t} \eta_{\bu}^{k+1})
	& \le C \Delta t \Big( \| \eta_u^0\|_{0,\Omega}^2 +  \sum_{k=0}^n (\sum_{i=1}^2 (\| \rho_{w_i}^{k+1}\|_{0,\Omega}^2 + \| \eta_{w_i}^{k+1}\|_{0,\Omega}^2) + \| \eta_u^{k+1} \|_{0,\Omega}^2 ) \Big),\label{ineq-1}\\
\Delta t \sum_{k=0}^n  J_{f^{k+1} -  f_h^{k+1}}(\eta_{w_1}^{k+1})& \le  C \Delta t \sum_{k=0}^n \Big( \| \eta_{w_1}^{k+1} \|_{0,\Omega}^2 + \| \eta_{w_2}^{k+1} \|_{0,\Omega}^2 \Big),\\
\Delta t \sum_{k=0}^n J_{g^{k+1} -  g_h^{k+1}}(\eta_{w_2}^{k+1})&\le C \Delta t \sum_{k=0}^n \Big( \| \eta_{w_2}^{k+1} \|_{0,\Omega}^2 + \| \eta_{w_2}^{k+1} \|_{0,\Omega}^2 \Big).
	\end{align*}
Then, a repeated application of Cauchy-Schwarz and Young's inequalities together with the assumption $ \| \bu^{s,n+1}\|_{1,\infty, \Omega}$ and noting that $ \| w_{i,h}^{n+1} \|_{1,\infty, \Omega}\leq C,~i=1,2$ (follow the argument similar to obtain \eqref{bound-w1h-dis}) help us in obtaining the following bound for the coupling term of \eqref{ineq-w1} for $i=1,2$.
\begin{align*}
& \Delta t \sum_{k=0}^{n}\Big( c(w_i^{k+1}, \eta_{w_i}^{k+1}, \bu^{s,k+1}) - c(w_{i,h}^{k+1}, \eta_{w_i}^{k+1}, \bu_h^{s,k+1})\Big) \\
& \quad = \Delta t \sum_{k=0}^{n} \Big( c(\rho_{w_i}^{k+1} + \eta_{w_i}^{k+1}, \eta_{w_i}^{k+1}, \bu^{s,k+1}) + c(w_{i,h}^{k+1}, \eta_{w_i}^{k+1}, \rho_{\bu}^{k+1} + \eta_{\bu}^{k+1}) \Big)\\
& \quad \le C \Delta t \sum_{k=0}^{n} \Big( | \rho_{w_1}^{k+1}|_1 \|\eta_{w_1}^{k+1} \|_0 \| \delta_t \bu^{s,k+1} \|_{0,\infty, \Omega} + \|\eta_{w_1}^{k+1} \|_0^2 \| \delta_{t} \bu^{s,k+1} \|_{1,\infty, \Omega} \\
& \qquad \quad \qquad + \|w_{1,h}^{k+1}\|_{1,\infty, \Omega} \| \eta_{w_1}^{k+1} \|_0 (\| \delta_t \eta_{\bu}^{k+1} \|_0 + \| \delta_t \rho_{\bu}^{k+1} \|_0)\Big) \\
	& \quad \le C \Big(  \| \eta_{\bu}^{0} \|_{0,\Omega}^2 + \| \rho_{\bu}^{0} \|_{0,\Omega}^2 + \sum_{k=0}^{n} \Big(\| \bu^{s,k+1} - \bu^{s,k} \|_{1,\infty, \Omega} (|\rho_{w_i}^{k+1}|_{1,\Omega}^2+ \|\eta_{w_i}^{k+1} \|_{0,\Omega}^2) \\
	&\qquad \qquad \qquad \qquad \qquad \qquad \quad+ \|w_{i,h}^{k+1}\|_{1,\infty, \Omega} ( \| \eta_{w_i}^{k+1} \|_{0,\Omega}^2 + \| \eta_{\bu}^{k+1} \|_{0,\Omega}^2 + \| \rho_{\bu}^{k+1} \|_{0,\Omega}^2) \Big) \Big).
\end{align*}
We then proceed to collect all these bounds, and we employ a proper choice of $(\bu_h^0, p_h^0, \psi_h^0, w_{1,h}^0, w_{2,h}^0)$. Next we gather these results and use Taylor's expansion in the following form: for any smooth enough function $\xi$, we have
	\begin{equation*}(\xi^{n+1} - \xi^n) - (\Delta t) \partial_t \xi(\cdot, t_{n+1})=\int_{t_n}^{t_{n+1}}(s-t_n) \partial_{tt} \xi(\cdot, s)~\ds.
\end{equation*}
We can then apply the result \ref{stability-fully-discrete} and Gronwall's inequality, which yields
\begin{align*}
	\Vert \eta_{\bu}^{n+1} \Vert_{1,\Omega}^2  + \Vert \eta_{\psi}^{n+1} \Vert_{0,\Omega}^2 + (\Delta t) \sum_{k=0}^{n} \Big( |\eta_p^{k+1}|_{1,\Omega}^2 + |\eta_{w_1}^{k+1}|_{1,\Omega}^2 + |\eta_{w_2}^{k+1}|_{1,\Omega}^2 \Big)\le C (h^2 + \Delta t^2).
	\end{align*}
Finally, the estimates \eqref{est-Au-Apsi}-\eqref{est-Aw} together with a direct application of triangle's inequality complete the rest of the proof.
\end{proof}}
	\section{Numerical tests}\label{sec:results}
 \subsection{Example 1: verification of spatio-temporal convergence}
	We have not derived theoretically error bounds, but proceed in this Section to examine numerically
	the rates of convergence of the mixed-primal scheme. Let us consider $\Omega=(0,1)^2$ with $\Gamma=\{\bx: x_1 = 0\text{ or } x_2 = 0\}$ (the bottom and left edges of the boundary) and $\Sigma=\{\bx: x_1=1 \text{ or } x_2=1\}$ (top and right sides of the square domain).
	Following \cite{kumar19}, we define
	closed-form solutions to the coupled poro-mechano-chemical system \eqref{eq:coupled} as
	\begin{equation}\label{eq:sol-ex}
	\begin{split}
	\bu^s = u_{\infty}\frac{t^2}{2}\begin{pmatrix}
	\sin(\pi x_1)\cos(\pi x_2)+\frac{x_1^2}{\lambda}\\
	-\cos(\pi x_1)\sin(\pi x_2)+\frac{x_2^2}{\lambda}
	\end{pmatrix}, \quad
	p^f = t(x_1^3 -x_2^4), \quad  \psi = p^f-\lambda\vdiv\bu^s,  \\
	w_1 = t[\exp(x_1)+\cos(\pi x_1)\cos(\pi x_2)], \quad w_2 = t[\exp(-x_2)+\sin(\pi x_1)\sin(\pi x_2)],
	\end{split}\end{equation}
	and we use these smooth functions to construct expressions for the body force $\bb(\bx,t)$, the
	fluid source $\ell(\bx,t)$, additional mass sources $S_1(\bx,t), S_2(\bx,t)$ for \eqref{eq:ADR1}-\eqref{eq:ADR2};
	a non-homogeneous displacement and non-homogeneous fluid normal flux on $\Gamma$,
	as well as non-homogeneous Dirichlet boundary pressure and non-homogeneous traction defined on $\Sigma$.
	The model parameters take the values: $u_{\infty} = \alpha=\gamma=0.1$,
	$c_0=\eta=10^{-3}$, $\kappa= 10^{-4}$, $D_1=0.05$, $D_2= \rho$,
	$\beta_1 = 170$, $\beta_2=0.1305$, $\beta_3=0.7695$, $\mu =10033.444$,
	$\lambda=993311.037$, and $\tau =10^5$. For this example we simply take the function that modulates the active stress in \eqref{eq:active-stress}
	as $r=w_1+w_2$ and use $\bk = (1,0)^T$.

	\begin{figure}[!t]
		\begin{center}
			\includegraphics[height=0.29\textwidth]{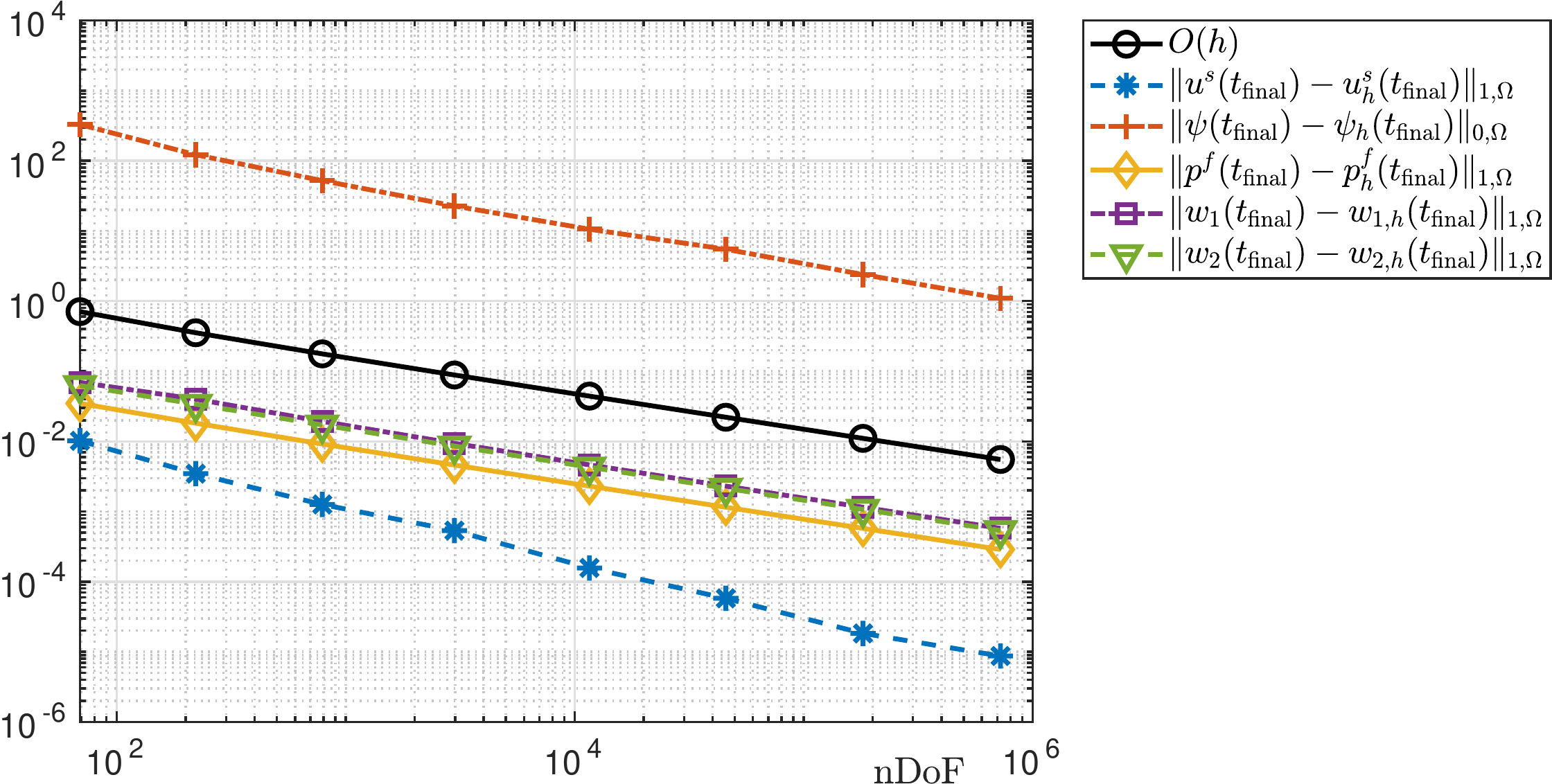}
			\includegraphics[height=0.29\textwidth]{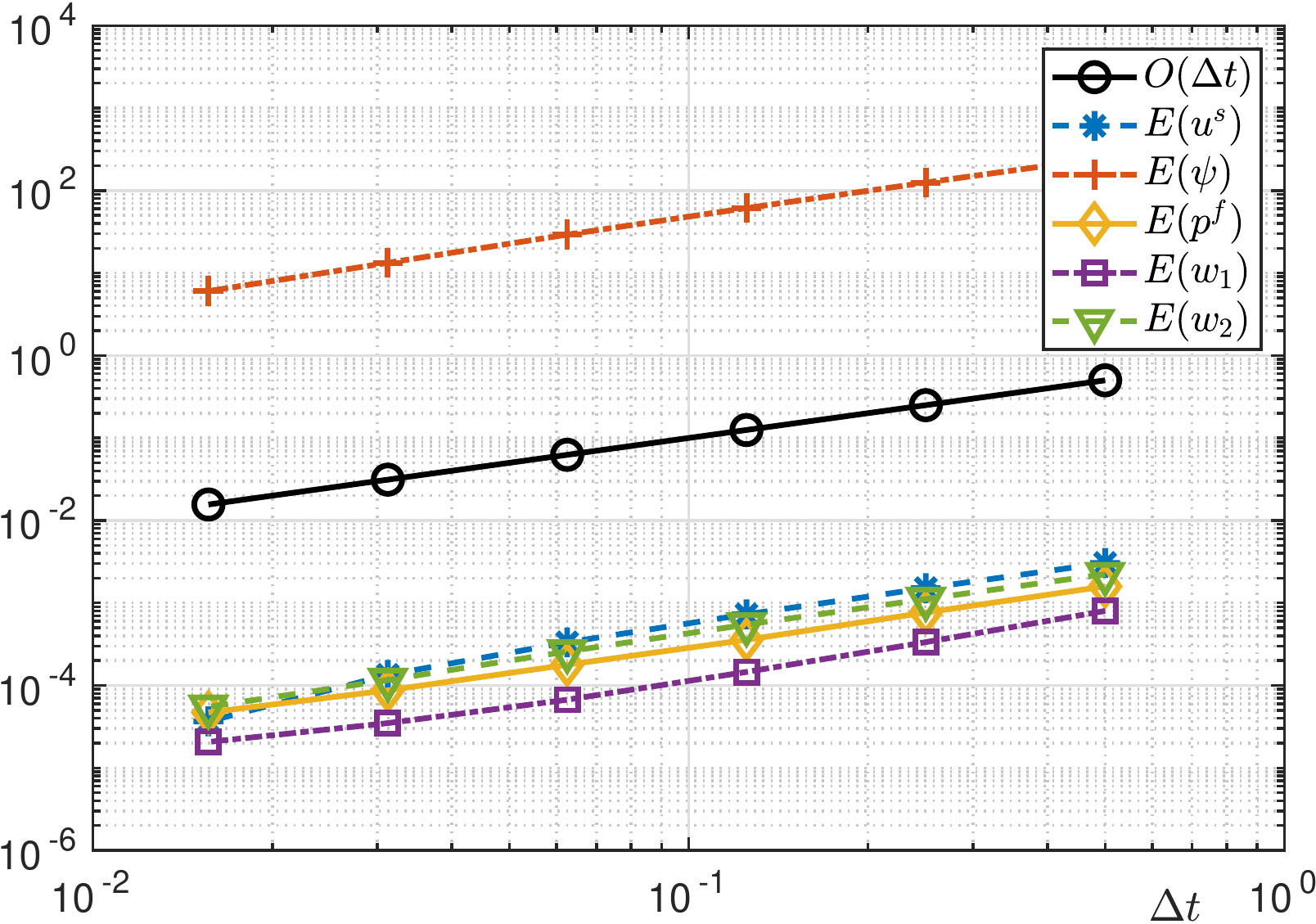}
		\end{center}
		\vspace{-3mm}
		\caption{Test 1. Convergence of the discretisation for the coupled poro-mechano-chemical
			problem. Error decay in space (left) and error history in time (right, where errors are computed from \eqref{eq:errors-dt}).}\label{fig:ex01}
	\end{figure}

	To \cblue{confirm numerically} the spatial accuracy of the discretisation defined by the finite element spaces
	specified in \eqref{eq:FEspaces}, we construct a sequence
	of seven uniformly refined meshes and compute individual
	approximate errors $e(\cdot)$ for each field in their natural spatial norm
	at the final time $t_{\text{final}}=0.04$, and
	the time-stepping scheme (backward Euler and implicit centred differences for first and second order time derivatives, respectively)
	approximates the polynomial dependence on time in \eqref{eq:sol-ex} exactly.
	The system is solved by the GMRES Krylov solver with
	incomplete LU factorisation (ILUT) preconditioning. The stopping criterion on the nonlinear iterations is based on a weighted residual norm dropping below the fixed
	tolerance of $1\cdot 10^{-5}$.
	Moreover, a small fixed time step $\Delta t =0.01$ is used for all mesh refinements. An average number of three Newton iterations are needed in all levels to reach convergence.
	The results are laid out in Figure \ref{fig:ex01} (left)
	where we observe an optimal error decay of $O(h)$ for all field variables.  We also see that the total error is dominated
	by the total pressure (which is large as these errors are not normalised and since the regime is nearly incompressible), but
	the convergence rates remain optimal with respect to the expected accuracy given by the interpolation properties
		of the finite element spaces \cblue{and stated in Theorem~\ref{theo:error}.}

	The convergence associated with the time discretisation can be more conveniently
	assessed considering a different set of closed-form solutions
	defined on a fixed mesh with 4000 elements
	$$ \bu^s =u_{\infty} \sin(t)\begin{pmatrix}\frac{x_1^2}{2\lambda}+{x_2^2}\\
	x_1^2+\frac{x_2^2}{2\lambda}\end{pmatrix}, \quad p^f = \sin(t)(x_1^2+x_1x_2),
	\quad w_1= \sin(t)(x_1^2-x_2^2), \quad w_2 =\sin(t)(x_1^2+x_2^2).$$
	With the given spatial discretisation, the errors will contain only contributions
	from the time approximation.
	We consider now the time interval $(0,1]$  and choose six time-step uniform
	refinements $\Delta t \in \{0.5,0.25,\ldots\}$ that we use to compute
	numerical solutions and cumulative errors up to $t_{\text{final}}$, of a generic individual field $s$ defined as
	\begin{equation}\label{eq:errors-dt}
	E(s) =  \biggl(\Delta t \sum_{n=1}^N \| s_h^n - s(t^n)\|_{0,\Omega}^2\biggr)^\frac{1}{2}.\end{equation}
	Figure \ref{fig:ex01} (right)
	indicates that the errors in time are also of first order, $O(\Delta t)$, \cblue{which also
	aligns with the convergence rates predicted by Theorem~\ref{theo:error}.}

	\begin{figure}[!t]
		\begin{center}
			\includegraphics[width=0.22\textwidth]{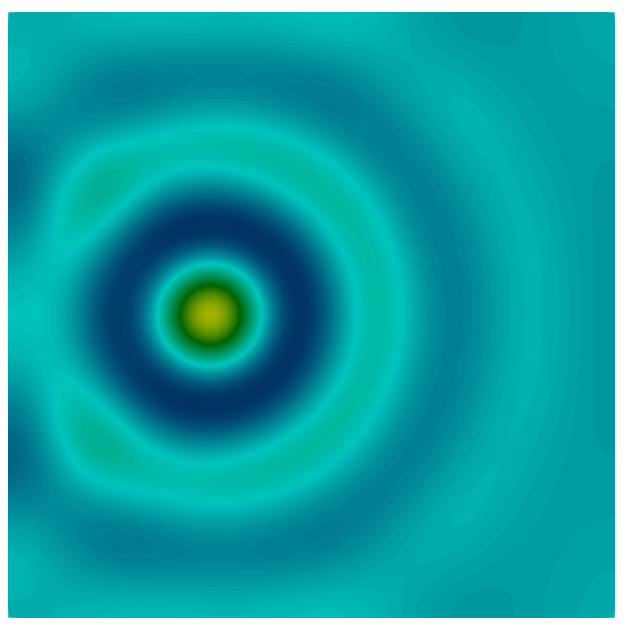}\
		\includegraphics[width=0.22\textwidth]{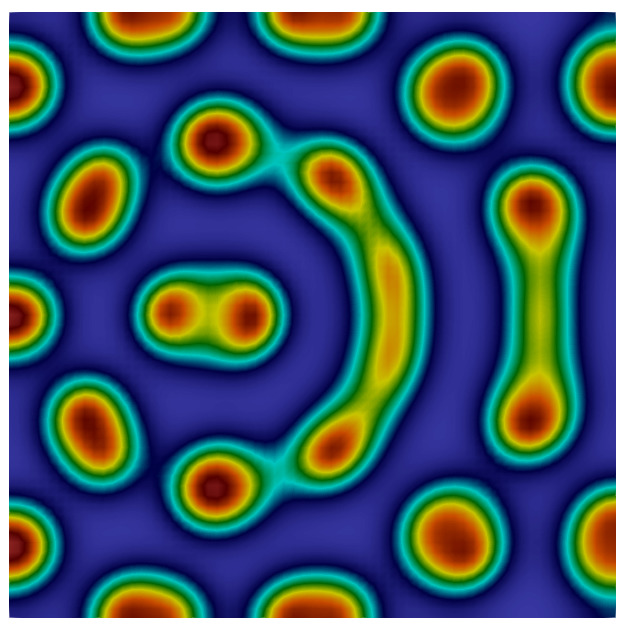}\
		\includegraphics[width=0.22\textwidth]{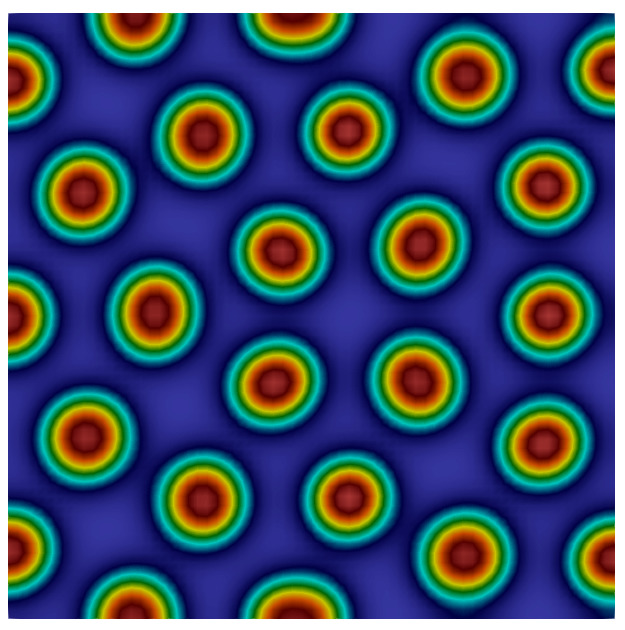}\\
\includegraphics[width=0.22\textwidth]{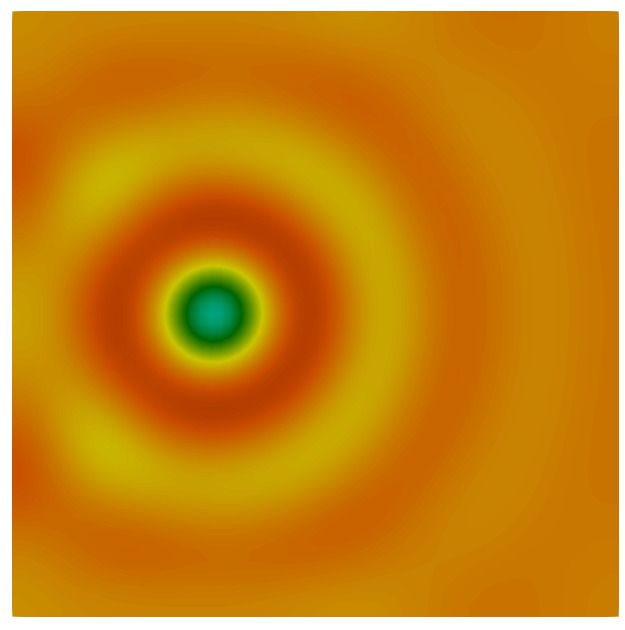}\
		\includegraphics[width=0.22\textwidth]{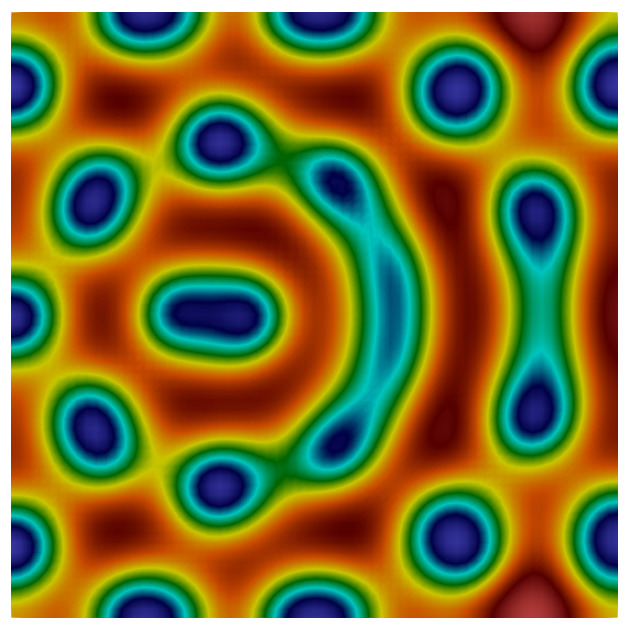}\
		\includegraphics[width=0.22\textwidth]{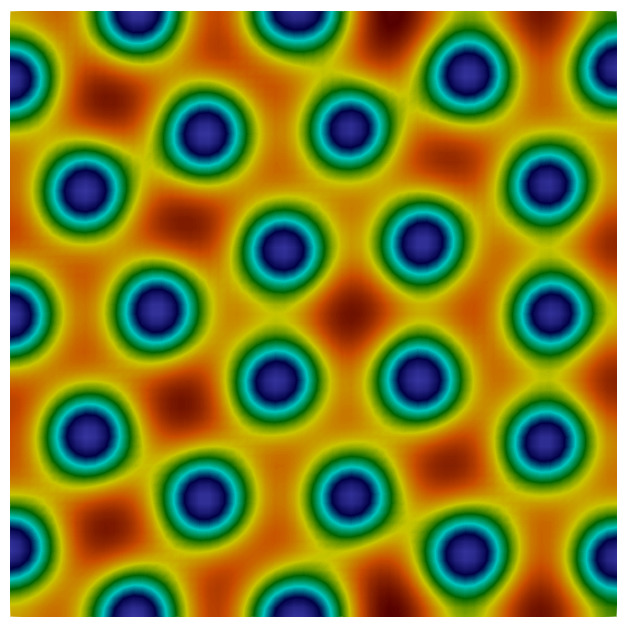}\\
		\includegraphics[width=0.245\textwidth]{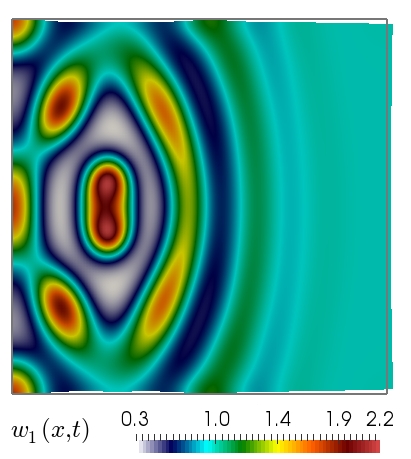}
		\includegraphics[width=0.245\textwidth]{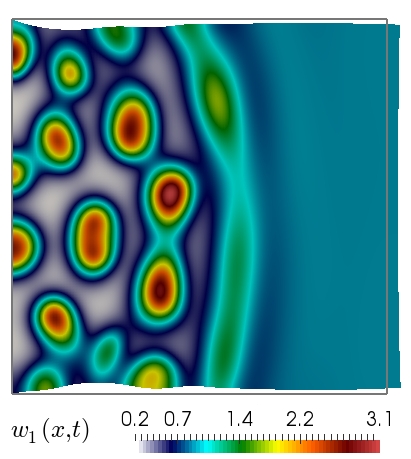}
		\includegraphics[width=0.245\textwidth]{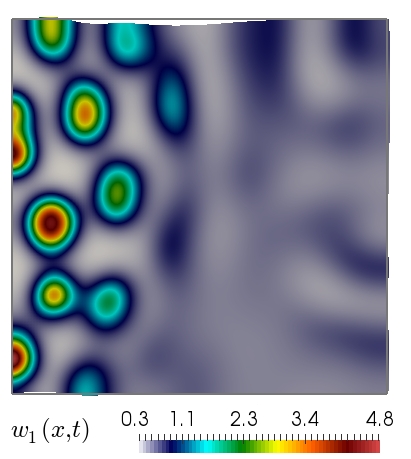}
		\includegraphics[width=0.245\textwidth]{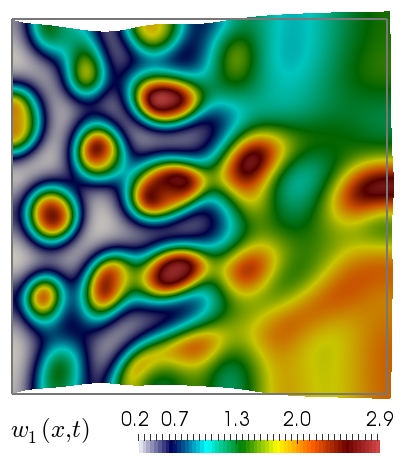}\\
		\includegraphics[width=0.245\textwidth]{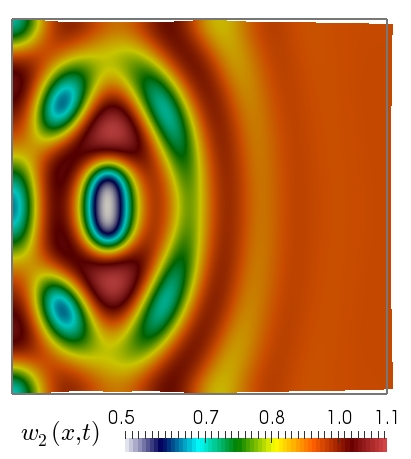}
		\includegraphics[width=0.245\textwidth]{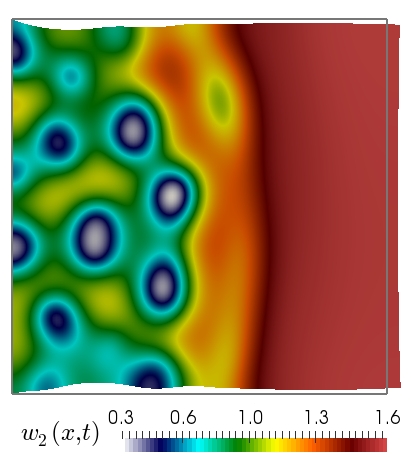}
		\includegraphics[width=0.245\textwidth]{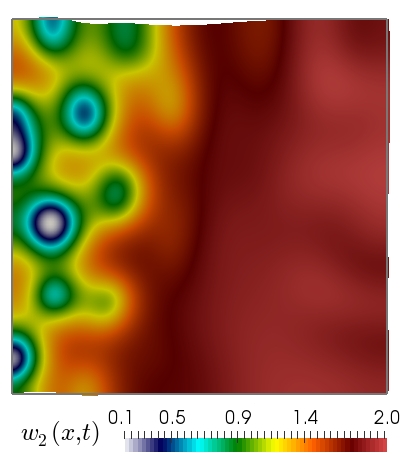}
		\includegraphics[width=0.245\textwidth]{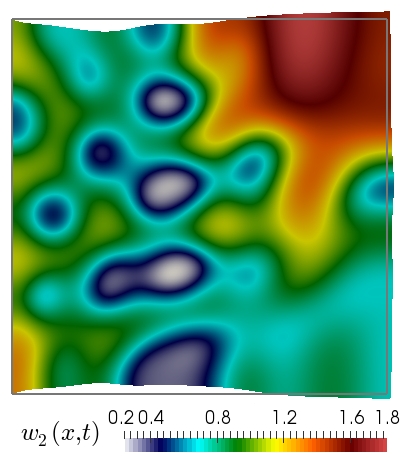}

		\end{center}
		\vspace{-3mm}
		\caption{\cblue{Test 2. Illustration of two-way coupling between poromechanical and
		chemical effects. Top rows: snapshots of concentrations of $w_1,w_2$ computed using $\gamma=0$, at three different times, reaching a stable state (right). Bottom rows: results obtained using $\gamma = 0.05$, and plotted on the deformed domain. These runs do not reach a stable spatial patterning, even after $t_{\text{final}}=10$.}}\label{fig:ex02}
	\end{figure}
	
\subsection{Example 2: Traction and active stress preventing stable patterning}
\cblue{Finally, we present a simple test to illustrate the application of the model and the
proposed
finite element method in the simulation of spatio-temporal chemical patterns. A
rectangular domain is considered $\Omega=(0,1)\times(0,0.6)$, where the right
segment constitutes the boundary $\Sigma$ on which a periodic-in-time traction is applied.
There we also impose zero fluid pressure. On the remainder of the boundary, $\Gamma = \partial\Omega \setminus\Sigma$ we prescribe zero
displacement and zero fluxes for the fluid. All parameters are taken as in Example 1, except for the active stress modulation $\tau=100$. The system is simulated until for the first round of computations $t_{\text{final}}=1$,
and we depict in Figure~\ref{fig:ex02} the approximate solutions. The top panels show
what the distribution of the chemical concentrations are when $\gamma = 0$ (that is, there is no two-way coupling as the chemicals are simply advected and diffused on the medium), and the distributions of the species are plotted on the reference, undeformed domain. Setting then $\gamma$ to a relatively small value $\gamma = 0.05$ modifies entirely the dynamics of the patterns. The periodic motion of the poroelastic slab and the chemically-induced active stress
imply that the stable state of the top panels is not reached (even if we continue towards
time horizons ten times longer than what we require in the first round of tests to achieve
a stable pattern). }

\cblue{We conclude this section summarising also our findings from \cite{verma02} dealing with the spectral linear stability analysis of the proposed model. We were able to demonstrate that the stability of the coupled system is influenced mainly by that of the special cases like homogeneous spatial distribution or uncoupled advection-diffusion-reaction sub-systems (\textit{i.e.}, $\tau=0$ and/or $\gamma=0$). We additionally observed that the strength of the coupling with poro-mechanical effects can bypass the conditions met by uncoupled sub-systems, and lead to linear instability and to the formation of complex spatio-temporal mechano-chemical patterns. For example, we have determined under which parameter regimes the system exhibits instability patterns. Also, a detailed derivation of the conditions leading to instabilities is outlined in the aforementioned reference.}	
	
	\section{Concluding remarks}\label{sec:concl}
	In this paper we have analysed a model of advection-reaction-diffusion in
	poroelastic materials. The set of equations assumes the regime of small strains and the coupling
	mechanisms are primarily dependent on source functions of change of volume, and active stresses. \cblue{All modelling aspects, implementation details for the mixed-primal scheme, application to biomedically-oriented problems, and a complete spectral stability analysis for the proposed system, can be found in our recent paper \cite{verma02}. In the present contribution we}
 have derived
	the well-posedness of the problem stated in mixed-primal form, and we have
	proposed a suitable mixed finite element scheme. Our work extends the similar-in-spirit contribution \cite{anaya18} in that
	we are able to derive stability bounds that are robust with respect to the Lam\'e constants of the solid. \cblue{Indeed, the main advantage of working with a mixed formulation for the
equations of poroelasticity is to have locking-free finite element schemes, which are of particular importance when the solids under consideration have large dilation modulus. These features are inherited from the method proposed in \cite{oyarzua16}, and a disadvantage with respect of adopting a formulation only in terms of displacement may be that we require more degrees of freedom.
	It is also noted that, since the proofs carried out here} do not rely entirely on the specific form of the reaction terms, the present formalism is quite general
	and could be applied to other systems with similar mathematical and physical structure, such as tumour development dynamics,
	long bones growth,
	or embryonic cell poromechanics.

As perspectives of this work, we aim at extending the analysis of Section~\ref{sec:solvability}
	to the case of finite-strain poroelasticity following the work in \cite{berger17}, \cblue{to
	cover also the effects of chemotaxis and general cross-diffusion, as well as interfacial conditions for two-layered materials \cite{anaya19b,deoliveira19,showalter02}}, and to incorporate viscoelasticity. Further directions include the design of
	mixed and double-mixed
	formulations that would improve the accuracy of the method in producing stresses or other variables of
	applicative interest and also contributing to achieve mass conservation \cite{kumar19,hong18}, as well as mesh adaptive methods guided by a posteriori error indicators \cite{ahmed19,ahmed20}.

\subsection*{Funding}
This work has been partially supported by CONICYT through the Becas-Chile Programme for foreign students, and by the London Mathematical Society through Scheme 5, Grant 51703.
	

\end{document}